\newtheorem{theorem}{Theorem}
\newtheorem{proposition}{Proposition}
\newtheorem{lemma}{Lemma}
\newtheorem{corollary}{Corollary}
\theoremstyle{definition}
\theoremstyle{remark}
\newtheorem{remark}{Remark}
\sloppy\pagestyle{plain}
\author{Ivan Cheltsov}
\address{\emph{Ivan Cheltsov}
		\newline
		\textnormal{University of Edinburgh,  Edinburgh, Scotland}
		\newline
		\textnormal{\texttt{I.Cheltsov@ed.ac.uk}}}
\title{K-stability of Fano 3-folds of Picard rank 3 and degree 22}
\thanks{Throughout this paper, all varieties are assumed to be projective and defined over~$\mathbb{C}$.}
\dedicatory{To the memory of Sasha Ananin}
\begin{document}

\begin{abstract}
We prove K-stability of smooth Fano 3-folds of Picard rank 3 and degree 22 that satisfy very explicit generality condition.
\end{abstract}

\maketitle

\tableofcontents

\section{Introduction}
\label{section:intro}

Let $L$ is a line in $\mathbb{P}^3$, let $C_4$ be a smooth quartic elliptic curve in $\mathbb{P}^3$ such that $L\cap C_4=\varnothing$,~and
let~$\pi\colon X\to\mathbb{P}^3$ be the blow up of these two  curves.
Then $X$ is a smooth Fano 3-fold of degree~22.
Moreover, all smooth Fano 3-folds of Picard rank 3 and degree 22 can be obtained in this way.

Choosing appropriate coordinates $x_0$, $x_1$, $x_2$, $x_3$ on $\mathbb{P}^3$,
we may assume that
$$
C_4=\big\{x_0^2+x_1^2+\lambda(x_2^2+x_3^2)=0,\lambda(x_0^2-x_1^2)+x_2^2-x_3^2=0\big\}\subset\mathbb{P}^3
$$
for a complex number $\lambda$ such that $\lambda\not\in\{0,\pm 1,\pm i\}$. Further, we may assume that
$$
L=\big\{a_0x_0+a_1x_1+a_2x_2=0,b_1x_1+b_2x_2+b_3x_3=0\big\}\subset\mathbb{P}^3
$$
for some $[a_0:a_1:a_2]$ and $[b_1:b_2:b_3]$ in $\mathbb{P}^2$. The following result is proved in \cite{Book}.

\begin{lemma}[{\cite[Lemma 5.73]{Book}}]
\label{lemma}
If $L=\{x_0-x_2=0,x_1-x_3=0\}$, then $X$ is K-stable.
\end{lemma}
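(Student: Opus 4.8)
The plan is to prove $K$-stability of $X$ by applying the valuative criterion of Fujita and Li, showing that the stability threshold (the $\delta$-invariant) satisfies $\delta(X) > 1$. Concretely, I would estimate $\beta(E) = A_X(E) - S_X(E) > 0$ for every prime divisor $E$ over $X$. Since $X$ is a Fano 3-fold of Picard rank $3$ arising as an explicit blow-up $\pi\colon X \to \mathbb{P}^3$, the Mori cone and the structure of extremal contractions are available explicitly; in particular $\mathrm{Pic}(X)$ is generated by $\pi^*\mathcal{O}_{\mathbb{P}^3}(1)$ and the two exceptional divisors over $L$ and $C_4$. The first step is to write down $-K_X$ in this basis, compute the volume polynomial $\mathrm{vol}(-K_X - tD)$ for the relevant divisor classes, and identify the Mori chamber decomposition of the effective cone, which controls $S_X$ for divisorial valuations coming from the boundary of the cone.

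**Reducing to points via Abban--Zhuang.**

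The main technical engine will be the Abban--Zhuang method of estimating $\delta$ through flags. By the standard reduction (as in the Calabi problem book \cite{Book}), it suffices to bound $\delta_p(X) > 1$ for every point $p \in X$, and for this one chooses a surface $S \subset X$ through $p$ (typically a hyperplane-type section or a suitable $\pi$-exceptional or strict-transform surface) and a curve $C \subset S$ through $p$, then estimates the resulting three-step refinement of $S_X$ via the Zariski decomposition on $S$. I would stratify the points of $X$ according to their image under $\pi$: a general point of $X$, points on the exceptional divisor over $L$, points on the exceptional divisor over $C_4$, points over the (empty, by hypothesis $L \cap C_4 = \varnothing$, though lines and secants of $C_4$ still matter) locus where special curves on $\mathbb{P}^3$ meet $L$ or $C_4$, and the handful of points lying on lines in $\mathbb{P}^3$ that are secant or tangent to $C_4$ or that meet $L$. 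For each stratum I would produce an explicit flag and run the computation.

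**The hypothesis on $L$ and the hard case.**

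The specific choice $L = \{x_0 - x_2 = 0,\ x_1 - x_3 = 0\}$ is what makes the geometry tractable: with this line and the given Fermat-like quartic $C_4$, one checks that $L$ is disjoint from $C_4$, is not contained in any quadric through $C_4$ in a degenerate way, and—crucially—that there is no line in $\mathbb{P}^3$ meeting $L$ and tangent to $C_4$ to excessively high order, nor a plane containing $L$ whose residual intersection with a quadric through $C_4$ is too singular. These are precisely the "generality" conditions alluded to in the abstract, made concrete. I expect the main obstacle to be the stratum of points lying over the line $L$ itself, or over points of $\mathbb{P}^3$ lying on a line through a point of $L$ and tangent to $C_4$: there the natural surface to use is the strict transform of a plane containing $L$, and its Zariski decomposition changes as the plane moves, so one must locate the worst such plane and verify that even in that case the refined $S_X$-invariant stays below $A_X$. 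A secondary obstacle is bookkeeping the Mori chamber walls when computing $S_X(E)$ for the divisorial valuations $E$ given by the exceptional divisors and by blow-ups of curves in $S$; getting the pseudo-effective thresholds and the Zariski chambers exactly right is where the explicit coordinates for $C_4$ and $L$ earn their keep.
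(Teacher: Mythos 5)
Your proposal is a plan, not a proof, and the gap is precisely everything that would make it one. Note first that the paper does not reprove this lemma at all: it is quoted from \cite[Lemma 5.73]{Book}, so the only ``proof'' in the paper is that citation, and the paper's own new argument (for Theorem~\ref{theorem:main}) is a different, more general computation. Your sketch correctly names the standard machinery --- the Fujita--Li valuative criterion and the Abban--Zhuang flag method, with surfaces such as strict transforms of planes through $L$ --- but every substantive step is deferred: you never fix the actual flags, never compute a single pseudo-effective threshold, Zariski decomposition, $S_X(S)$, or local $\delta$-invariant, and the case analysis over the strata of $X$ is only gestured at (``I would produce an explicit flag and run the computation''). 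In this subject the entire content of such a K-stability proof \emph{is} those computations: for instance, in the present paper the analogous argument requires the explicit values $S_X(S)=\tfrac{67}{88}$ and $\tfrac{109}{176}$ for the two pencils $|H-R|$ and $|2H-E|$, the Zariski decompositions of $-K_X-uS$ and of the restricted divisors on the del Pezzo fibers, and an appendix worth of estimates of $\delta_P(S,D)$ for polarized (possibly singular) quintic and sextic del Pezzo surfaces. Nothing of that kind appears in your text, so no inequality $\beta(\mathbf{F})>0$ is ever actually verified.

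There is also a conceptual looseness in how you invoke the special line. You assert that $L=\{x_0-x_2=0,\,x_1-x_3=0\}$ guarantees various genericity properties (no plane through $L$ meeting the quadrics through $C_4$ too degenerately, no excessively tangent lines), but you neither state these conditions precisely nor check them for the given coordinates, and you do not explain how they enter the $\delta$-estimates; in the paper the relevant condition is formulated exactly (every plane through $L$ meets $C_4$ with at most one multiple point, of multiplicity at most three, equivalently the fibers of $\sigma$ have only one $\mathbb{A}_1$ or $\mathbb{A}_2$ singularity), and it is used to control which singular del Pezzo surfaces can occur as fibers and hence which local $\delta$-computations are needed. Without pinning down such a condition and verifying it for your specific $L$, the ``hard case'' you identify cannot even be set up, let alone resolved. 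So as written the proposal would not compile into a proof of the lemma; it is an accurate description of the genre of argument one would have to carry out.
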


This implies that the Fano 3-fold $X$ is K-stable if $L$ and $C_4$ are chosen to be sufficiently general, because K-stability is an open property \cite[Theorem 4.5]{BlumLiuXu} (see \cite{Xu} for basics facts about K-stability).
Actually, we expect that $X$ is always K-stable. However, we are unable to prove this~at~the~moment.
In this paper, we prove that $X$ is K-stable if it satisfies the following condition:
\begin{center}
for every plane $\Pi\subset\mathbb{P}^3$ passing through $L$,\\
the intersection $\Pi\cap C_4$ contains at most one multiple point, \\
and the multiplicity of this point is at most three.
\end{center}
One can check that this generality condition holds in case when $L$ is the line $\{x_0-x_2=0,x_1-x_3=0\}$.
Hence, our main theorem is a generalization of Lemma~\ref{lemma}, but their proofs are very different.

To state our main theorem in a more natural~way, note that we have commutative diagram
$$
\xymatrix{
&&\mathbb{P}^1\times\mathbb{P}^1\ar@/^1pc/@{->}[drr]^{\mathrm{pr}_2}\ar@/_1pc/@{->}[dll]_{\mathrm{pr}_1}&&\\%
\mathbb{P}^1&&X\ar@{->}[d]_{\pi}\ar@{->}[u]_{\eta}\ar@{->}[rr]^{\phi}\ar@{->}[ll]_{\sigma}&&\mathbb{P}^1\\%
&&\mathbb{P}^3\ar@{-->}[urr]_{\varphi}\ar@{-->}[ull]^{\varsigma}&&}
$$
where $\varsigma$ is given by $[x_0:x_1:x_2:x_3]\mapsto[a_0x_0+a_1x_1+a_2x_2:b_1x_1+b_2x_2+b_3x_3]$,
the map $\varphi$ is given~by
$$
[x_0:x_1:x_2:x_3]\mapsto\big[x_0^2+x_1^2+\lambda(x_2^2+x_3^2):\lambda(x_0^2-x_1^2)+x_2^2-x_3^2\big],
$$
the map $\sigma$ is a fibration into quintic del Pezzo surfaces,
$\phi$ is a fibration into sextic del Pezzo surfaces,
the map $\eta$ is a conic bundle, $\mathrm{pr}_1$ and $\mathrm{pr}_2$ are projections to the first and the second factors, respectively.
Now, we can state our main theorem as follows:

\begin{theorem}
\label{theorem:main}
Suppose that every singular fiber of the fibration $\sigma\colon X\to\mathbb{P}^1$ has one singular~point,
and this point is either a singular point of type $\mathbb{A}_1$ or a singular point of type $\mathbb{A}_2$.
Then $X$ is K-stable.
\end{theorem}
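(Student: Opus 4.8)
We prove Theorem~\ref{theorem:main} by showing that $\delta_P(X)>1$ for every point $P\in X$; this gives K-stability, since $\delta(X)=\inf_{P\in X}\delta_P(X)$ and a Fano variety with $\delta>1$ is K-stable \cite{Xu}. Write $H=\pi^{*}\mathcal{O}_{\mathbb{P}^3}(1)$ and let $E_L$, $E_C$ be the $\pi$-exceptional divisors over $L$ and $C_4$, so that $-K_X=4H-E_L-E_C$; since $L\cap C_4=\varnothing$ the divisors $E_L$ and $E_C$ are disjoint, with $E_L^3=-2$, $E_C^3=-16$, $H\cdot E_L^2=-1$, $H\cdot E_C^2=-4$ and $(-K_X)^3=22$. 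The map $\varsigma$ is the linear projection from $L$, so a fiber of $\sigma$ is $S\sim H-E_L$, the strict transform of a plane $\Pi\supset L$; it is the quintic del Pezzo surface obtained from $\mathbb{P}^2=\Pi$ by blowing up the length-$4$ subscheme $\Pi\cap C_4$, and under the hypothesis of the theorem it is either smooth or has a single singular point of type $\mathbb{A}_1$ or $\mathbb{A}_2$. The plan is to apply the Abban--Zhuang inductive method to the del Pezzo fibration $\sigma$: for $P\in X$ we use the flag $P\in C\subset S\subset X$ with $S$ the fiber of $\sigma$ through $P$ and $C$ a suitable curve on $S$, and bound $\delta_P(X)$ from below by the minimum of the ratios $A/S$ attached to $S$, to $C$ (for the linear series refined by $S$), and to $P$ (for the doubly refined series).

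The first step is to compute $S_X(S)$. Since $S|_S=\mathcal{O}_S$, adjunction gives $-K_X|_S=-K_S$, hence $(-K_X)^2\cdot S=(-K_S)^2=5$, $(-K_X)\cdot S^2=0$ and $S^3=0$. One checks that $-K_X-tS=(4-t)H-(1-t)E_L-E_C$ is nef for $t\in[0,1]$ (it interpolates between $-K_X$ and the base-point-free class $3H-E_C$), while for $t\in[1,2]$ its Nakayama--Zariski positive part is $(4-t)H-E_C$ with negative part $(t-1)E_L$ (every effective $\mathbb{Q}$-divisor numerically equivalent to $-K_X-tS$ contains $(t-1)E_L$), and the pseudo-effective threshold is $\tau=2$. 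Therefore $\mathrm{vol}(-K_X-tS)=22-15t$ on $[0,1]$ and $\mathrm{vol}(-K_X-tS)=(4-t)^3-12(4-t)+16$ on $[1,2]$, so
$$
S_X(S)=\frac{1}{22}\left(\int_0^1\!\big(22-15t\big)\,dt+\int_1^2\!\big((4-t)^3-12(4-t)+16\big)\,dt\right)=\frac{67}{88}<1 .
$$
As all fibers of $\sigma$ are numerically equivalent, $A_X(S)/S_X(S)=88/67>1$ for every fiber, and by Abban--Zhuang it remains to prove $\delta_P\big(S,W^S_{\bullet,\bullet}\big)>1$ for every $P\in X$, where $W^S_{\bullet,\bullet}$ is the refinement of $-K_X$ by $S$.

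The refined series is explicit: since $S|_S=\mathcal{O}_S$, the positive part restricts to $P(t)|_S=-K_S$ for $t\in[0,1]$ and $P(t)|_S=(4-t)\ell-e_1-e_2-e_3-e_4$ for $t\in[1,2]$, where $\ell$ is the line class of $\Pi=\mathbb{P}^2$ and $e_1,\dots,e_4$ are the components of $E_C|_S$; note that $E_L|_S$ is the strict transform of the line $L\subset\Pi$, which avoids $C_4$, hence a smooth rational curve in the class $\ell$ with $(-K_S)\cdot(E_L|_S)=3$. We then run the Abban--Zhuang method once more on the surface $S$: for $P\in S$ we choose a curve $C\ni P$ --- a $(-1)$-curve, or the $(-2)$-curve through the singular point when $S$ is singular and $P$ lies on it, or $E_L|_S$, or a component of $E_C|_S$ --- and reduce to a point estimate at $P$ for the doubly refined linear series. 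The argument splits into the cases: $P$ a general point of $S$; $P$ on a line of $S$ disjoint from $E_L\cup E_C$; $P\in E_L|_S$; $P\in E_C|_S$; and $P$ lying over the singular point of a singular fiber. In each of the first four cases the weighting of $W^S_{\bullet,\bullet}$ --- which concentrates most of its mass on $t\in[0,1]$, where $P(t)|_S=-K_S$, and forces the class to degenerate to the nef conic class $2\ell-e_1-e_2-e_3-e_4$ as $t\to2$ --- makes the relevant ratio exceed $1$; here one uses that $C_4$, being a transverse intersection of two quadrics, has no trisecant lines (which controls the $E_C$-case) and the geometry of the conic bundle $\eta$ (which controls the $E_L$-case).

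The main difficulty is the singular fibers of $\sigma$. When $S$ has an $\mathbb{A}_1$ or $\mathbb{A}_2$ point $Q$ and $P$ equals $Q$ or lies on the exceptional $(-2)$-curve over it, the intersection form on the minimal resolution degenerates, the Zariski decompositions of $P(t)|_S-xC$ change, and one must bound every divisor $F$ over $S$ centered at $Q$. This is precisely where the hypothesis is used: that each singular fiber carries a \emph{single} singular point, of type at most $\mathbb{A}_2$, keeps the local contribution small enough that $A_S(F)>S\big(W^S_{\bullet,\bullet};F\big)$ still holds, whereas a worse du Val point, or two singular points on one fiber, breaks this estimate through $\sigma$ --- and corresponds exactly to the failure of the generality condition of the Introduction (no trisecants of $C_4$ forces all degenerations of $\Pi\cap C_4$ to be tangencies, a double point of $\Pi\cap C_4$ producing an $\mathbb{A}_1$ fiber and a triple point an $\mathbb{A}_2$ fiber, so the two conditions are equivalent). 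For the finitely many points of $E_L\cup E_C$ that still resist --- those over the special planes through $L$ --- one reruns the argument with $S$ replaced by a fiber of the sextic del Pezzo fibration $\phi$, or by a fiber of the conic bundle $\eta$, or by one of the $\pi$-exceptional divisors, again invoking the generality condition to bound the resulting $S$-invariants. Combined with the fact that the line $\{x_0-x_2=0,\,x_1-x_3=0\}$ satisfies the hypothesis, this recovers Lemma~\ref{lemma} as a special case.
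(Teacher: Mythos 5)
Your framework is the right one (Abban--Zhuang refinement by the fibers of $\sigma$, and your computation $S_X(S)=\tfrac{67}{88}$ agrees with the paper), but the proposal leaves a genuine gap exactly at the heart of the matter: the points lying on the singular fibers of $\sigma$. You assert that the hypothesis ``one singular point, of type $\mathbb{A}_1$ or $\mathbb{A}_2$'' keeps the local contribution small enough that $A_S(F)>S\big(W^S_{\bullet,\bullet};F\big)$ still holds for divisors centered at the singular point or on the curve through it, but you give no computation, and this is not how the difficulty is resolved in the paper. The paper's estimates on a singular quintic fiber (Propositions~\ref{proposition:dP5-A1} and \ref{proposition:dP5-A2}) explicitly \emph{exclude} the fiber $C$ of $\phi\vert_S$ through the singular point; for a destabilizing center one is forced into $Z\subset C$, and at that stage the paper changes the surface: it takes the sextic del Pezzo fiber $S'\in|2H-E|$ through the singular point, proves by a geometric argument (the plane $\pi(S)$ is tangent to $C_4$ at $\pi(O)$, the quadric $\pi(S')$ is smooth, and $L$ meets it transversally) that $S'$ is \emph{smooth}, and then applies the sextic del Pezzo estimates with $S_X(S')=\tfrac{109}{176}$. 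Your proposal mentions switching to $\phi$ (or $\eta$, or the exceptional divisors) only for ``finitely many points of $E_L\cup E_C$ that still resist'', which mischaracterizes both the resisting locus (it is the conics $C$ over the singular points of singular fibers, not a finite subset of $E_L\cup E_C$) and omits the key smoothness lemma for $S'$ that makes the switch work.

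A second gap: you aim to prove $\delta_P(X)>1$ at every point, but the method does not deliver a strict inequality everywhere. On a smooth sextic fiber $S'$ the estimate (Corollary~\ref{corollary:dP6-smooth}) only gives $\delta_P(S',W^{S'}_{\bullet,\bullet})\geqslant 1$, with possible equality when $P\in E\cap S'$ lies on a $(-1)$-curve; so at those finitely many points the Abban--Zhuang bound is exactly $1$. The paper breaks this equality case by the dichotomy on the center $Z$ of the putative destabilizing divisor: choosing $P$ general in $Z$ shows $Z$ must be one of these finitely many points, and then the sharper form of the Abban--Zhuang inequality for zero-dimensional centers (Remark~\ref{remark}: a destabilizer would force $\delta_P<1$ strictly) yields the contradiction. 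Your proposal has no mechanism for this, and without it the argument stalls precisely where $\min\{1/S_X(S'),\delta_P\}=1$. Finally, the quantitative core --- the $\delta$-estimates for the quintic and sextic del Pezzo surfaces with the \emph{varying} polarizations $P(u)\vert_S$, integrated over $u$ via a lemma of Nemuro type --- is asserted rather than carried out; as a sketch that could be acceptable, but combined with the two structural gaps above the proposal does not yet amount to a proof.
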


Using basic geometric facts about quintic del Pezzo surfaces with at most Du Val singularities \cite{CorayTsfasman},
one can show that the generality condition in Theorem~\ref{theorem:main} is equivalent to the following condition:
\begin{center}
every fiber of the conic bundle $\eta\colon X\to\mathbb{P}^1\times\mathbb{P}^1$ is reduced.
\end{center}
But this condition is equivalent to the smoothness of the discriminant curve of the conic bundle~$\eta$.

\begin{corollary}
\label{corollary:main}
If the discriminant curve of the conic bundle $\eta$ is smooth, then $X$ is K-stable.
\end{corollary}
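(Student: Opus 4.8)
The plan is to derive Corollary~\ref{corollary:main} from Theorem~\ref{theorem:main} by translating the smoothness of the discriminant of $\eta$ into the generality hypothesis of that theorem, in three steps. First I would describe the fibers of $\sigma$. Since $\sigma=\mathrm{pr}_1\circ\eta$, the fiber $S_t=\sigma^{-1}(t)$ is the proper transform in $X$ of the plane $\Pi_t\subset\mathbb{P}^3$ through $L$ corresponding to $t$, and $\eta$ restricts to a conic bundle $S_t\to\{t\}\times\mathbb{P}^1\cong\mathbb{P}^1$. The curve $C_4$ is a complete intersection of two quadrics, so a line meeting $C_4$ in a subscheme of length at least $3$ would lie on both of them and hence on $C_4$, which is absurd; thus $C_4$ has no trisecant lines, and since $C_4$ is smooth the length-$4$ scheme $Z_t=\Pi_t\cap C_4$ is curvilinear and disjoint from $L$. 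Reading off local equations of $\pi$ near the points over $Z_t$, one finds that $S_t$ is obtained from $\Pi_t\cong\mathbb{P}^2$ by modifying it along $Z_t$: it is a quintic del Pezzo surface with Du Val singularities whose type is governed by the partition of $4$ recording $Z_t$ --- $S_t$ is smooth for $1+1+1+1$, has one $\mathbb{A}_1$ point for $2+1+1$, one $\mathbb{A}_2$ point for $3+1$, two $\mathbb{A}_1$ points for $2+2$, and one $\mathbb{A}_3$ point for $4$. Hence the hypothesis of Theorem~\ref{theorem:main} is equivalent to: for every plane $\Pi$ through $L$, the scheme $\Pi\cap C_4$ is neither of type $2+2$ nor of type $4$.

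Second I would show that this condition is equivalent to the reducedness of every fiber of $\eta$. The fibers of $\eta$ over $\{t\}\times\mathbb{P}^1$ are the proper transforms of the members of the pencil of conics cut out on $\Pi_t$ by the pencil of quadrics through $C_4$; since $C_4\not\subset\Pi_t$, this pencil coincides with the linear system of conics in $\Pi_t$ passing through $Z_t$, and this linear system is one-dimensional because $Z_t$, being free of trisecants (hence not contained in a line), imposes independent conditions on conics. A fiber of $\eta$ is non-reduced exactly when this pencil contains a double line $2\ell$, that is, when $Z_t\subset 2\ell$ for some line $\ell\subset\Pi_t$. A short local computation with the ideal of a curvilinear length-$4$ scheme shows that $Z_t$ is contained in a double line precisely when $Z_t$ has type $2+2$ (with $\ell$ the line joining the two double points) or type $4$ (with $\ell$ the reduced tangent line of $C_4$ at the contact point), and never when $Z_t$ has type $1+1+1+1$, $2+1+1$ or $3+1$. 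Alternatively one can argue through Coray--Tsfasman's description of the conic bundle structures on Du Val quintic del Pezzo surfaces \cite{CorayTsfasman}: the surfaces carrying such a conic bundle with all fibers reduced are exactly the smooth one and the ones with a single $\mathbb{A}_1$ point or a single $\mathbb{A}_2$ point. Either way, every fiber of $\eta$ is reduced if and only if no plane $\Pi$ through $L$ has $\Pi\cap C_4$ of type $2+2$ or $4$, i.e.\ if and only if the hypothesis of Theorem~\ref{theorem:main} holds.

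Third I would invoke the classical fact about conic bundles $\eta\colon X\to S$ with $X$ and $S$ smooth: over a general point of the discriminant the fiber is a pair of distinct lines and the discriminant is smooth there, whereas if the fiber over a singular point of the discriminant were a pair of distinct lines, then $X$ would be singular at the node of that conic. This is immediate from the local normal forms $\{xy=f(u,v)\}$ and $\{x^2=f(u,v)\}$ of a conic bundle over a surface, comparing $\mathrm{Sing}(X)$ with the singular locus of $\{f=0\}$. Since $X$ is smooth, it follows that the discriminant of $\eta$ is smooth if and only if every fiber of $\eta$ is reduced. Chaining the three steps, the discriminant of $\eta$ is smooth if and only if the hypothesis of Theorem~\ref{theorem:main} holds, so Corollary~\ref{corollary:main} is a consequence of Theorem~\ref{theorem:main}.

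The crux is the first two steps: deciding exactly which degenerations of $\Pi\cap C_4$ can occur for planes through $L$, pinning down the Du Val singularity type of $S_t$ in each case, and matching these with the appearance of a double-line member in the associated pencil of conics. This is where the geometry of $C_4$ as a complete intersection of quadrics (hence free of trisecants) and the classification of Du Val quintic del Pezzo surfaces genuinely enter; the third step is a routine appeal to conic bundle theory.
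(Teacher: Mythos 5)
Your argument is correct and follows essentially the same route as the paper, which deduces the corollary from Theorem~\ref{theorem:main} via the two asserted equivalences (generality condition $\Leftrightarrow$ all fibers of $\eta$ reduced, via the Du Val quintic del Pezzo geometry of \cite{CorayTsfasman}, and reducedness $\Leftrightarrow$ smoothness of the discriminant since $X$ is smooth). In fact you supply the details (classification of the schemes $\Pi\cap C_4$, the resulting singularity types of the fibers of $\sigma$, and the double-line criterion) that the paper leaves as ``one can show.''
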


One can show that the discriminant curve of the conic bundle $\eta$ is given in $\mathbb{P}^1\times\mathbb{P}^1$ by
\begin{multline*}
\quad\quad \quad \lambda(b_1^2-\lambda b_2^2+\lambda b_3^2)y_0^2z_0^3-(\lambda^3b_2^2+\lambda^3 b_3^2+b_1^2)y_0^2z_0^2z_1-(\lambda^3 b_1^2-b_2^2+b_3^2)y_0^2z_0z_1^2-\\
-(\lambda^3b_1^2-b_2^2+b_3^2)y_0^2z_1^3-2\lambda(a_1b_1-\lambda a_2b_2)y_0y_1z_0^3+2(\lambda^3a_2b_2+a_1b_1)y_0y_1z_0^2z_1+\quad\\
\quad+2(\lambda^3a_1b_1-a_2b_2)y_0y_1z_0z_1^2-2\lambda(\lambda a_1b_1+a_2b_2)y_0y_1z_1^3-\lambda(\lambda a_2^2+a_0^2-a_1^2)y_1^2z_0^3-\\
-(\lambda^3 a_2^2+a_0^2+a_1^2)y_1^2z_0^2z_1+(\lambda^3 a_0^2-\lambda^3 a_1^2+a_2^2)y_1^2z_0z_1^2+\lambda(\lambda a_0^2+\lambda a_1^2+a_2^2)y_1^2z_1^3=0,\quad
\end{multline*}
where $([y_0:y_1],[z_0:z_1])$ are coordinates on $\mathbb{P}^1\times\mathbb{P}^1$.

\medskip
\noindent
\textbf{Acknowledgements.}
This work has been supported by EPSRC grant \textnumero EP/V054597/1.

\section{The proof}
\label{section:proof}

Let us use all assumptions and notations introduced in Section~\ref{section:intro}.
To prove Theorem~\ref{theorem:main}, we suppose that each singular fiber of the fibration $\sigma$ has one singular~point,
and this point is either a singular point of type $\mathbb{A}_1$ or a singular point of type $\mathbb{A}_2$.
Set $H=\pi^*(\mathcal{O}_{\mathbb{P}^3}(1))$. Let $E$ and $R$ be the exceptional surfaces of the blow up $\pi$ such that $\pi(E)=C_4$ and $\pi(R)=L$.
Then
\begin{itemize}
\item the quintic del Pezzo fibration $\sigma$ is given by the pencil $|H-R|$,
\item the sextic del Pezzo fibration $\varphi$ is given by the pencil $|2H-E|$,
\item the conic bundle $\eta$ is given by $|3H-E-R|$.
\end{itemize}
Note that $\mathrm{Eff}(X)=\langle E, R, H-R, 2H-E\rangle$,
and the cone $\overline{\mathrm{NE}(X)}$ is generated by the classes of curves contracted by the blow up $\pi\colon X\to\mathbb{P}^3$ and the conic bundle $\eta\colon X\to\mathbb{P}^1\times\mathbb{P}^1$.

By the Fujita--Li valuation criterion \cite{Fujita,Li}, the Fano 3-fold $X$ is K-stable if and only if
$$
\beta(\mathbf{F})=A_X(\mathbf{F})-S_X(\mathbf{F})>0
$$
for every prime divisor $\mathbf{F}$ over $X$, where $A_X(\mathbf{F})$ is the~log discrepancy of the~divisor $\mathbf{F}$, and
$$
S_X\big(\mathbf{F}\big)=\frac{1}{(-K_X)^3}\int\limits_0^{\infty}\mathrm{vol}\big(-K_X-u\mathbf{F}\big)du.
$$
To show this, we fix a prime divisor $\mathbf{F}$ over~$X$.
Then we set $Z=C_{X}(\mathbf{F})$.
If $Z$ is an irreducible~surface, then it follows from \cite{Fujita2016} that $\beta(\mathbf{F})>0$, see also \cite[Theorem 3.17]{Book}.
Therefore, we may assume that
\begin{itemize}
\item either  $Z$ is an irreducible curve in $X$,
\item or $Z$ is a point in $X$.
\end{itemize}
In both cases, we fix a point $P\in Z$. Let $S$ be one of the following two surface:
\begin{enumerate}
\item the surface in the pencil $|H-R|$ that contains $P$,
\item the surface in the pencil $|2H-E|$ that contains $P$.
\end{enumerate}
Then $S$ is a del Pezzo surface with at most Du Val singularities. Set
$$
\tau=\mathrm{sup}\Big\{u\in\mathbb{R}_{\geqslant 0}\ \big\vert\ \text{the divisor  $-K_X-uS$ is pseudo-effective}\Big\}.
$$
For~$u\in[0,\tau]$, let $P(u)$ be the~positive part of the~Zariski decomposition of the~divisor $-K_X-uS$,
and let $N(u)$ be its negative part. If $S\in|H-R|$, then $\tau=2$,
$$
P(u)\sim_{\mathbb{R}} \left\{\aligned
&(4-u)H-E+(u-1)R \ \text{ if } 0\leqslant u\leqslant 1, \\
&(4-u)H-E\ \text{ if } 1\leqslant u\leqslant 2,
\endaligned
\right.
$$
and
$$
N(u)= \left\{\aligned
&0\ \text{ if } 0\leqslant u\leqslant 1, \\
&(u-1)R\ \text{ if } 1\leqslant u\leqslant 2,
\endaligned
\right.
$$
which gives $S_{X}(S)=\frac{1}{22}\int\limits_{0}^{2}P(u)^3du=\frac{67}{88}$. Similarly, if $S\in|2H-E|$, then $\tau=\frac{3}{2}$,
$$
P(u)\sim_{\mathbb{R}} \left\{\aligned
&(4-2u)H+(u-1)E-R \ \text{ if } 0\leqslant u\leqslant 1, \\
&(4-2u)H-R\ \text{ if } 1\leqslant u\leqslant \frac{3}{2},
\endaligned
\right.
$$
and
$$
N(u)= \left\{\aligned
&0\ \text{ if } 0\leqslant u\leqslant 1, \\
&(u-1)E\ \text{ if } 1\leqslant u\leqslant \frac{3}{2},
\endaligned
\right.
$$
which gives $S_{X}(S)=\frac{109}{176}$.
Now, for every prime divisor $F$ over the surface $S$, we set
$$
S\big(W^S_{\bullet,\bullet};F\big)=\frac{3}{(-K_X)^3}\int\limits_0^{\tau}\mathrm{ord}_F\big(N(u)\vert_{S}\big)\big(P(u)\vert_{S}\big)^2du+\frac{3}{(-K_X)^3}\int\limits_0^\tau\int\limits_0^{\infty}\mathrm{vol}\big(P(u)\big\vert_{S}-vF\big)dvdu.
$$
Then, following \cite{AbbanZhuang,Book}, we let
$$
\delta_P\big(S,W^S_{\bullet,\bullet}\big)=\inf_{\substack{F/S\\P\in C_S(F)}}\frac{A_S(F)}{S\big(W^S_{\bullet,\bullet};F\big)},
$$
where the~infimum is taken by all prime divisors over the surface $S$ whose center on $S$ contains $P$.
Then it follows from \cite{AbbanZhuang,Book} that
\begin{equation}
\label{equation:AZ}\tag{$\bigstar$}
\frac{A_X(\mathbf{F})}{S_X(\mathbf{F})}\geqslant\min\Bigg\{\frac{1}{S_X(S)},\delta_P\big(S,W^S_{\bullet,\bullet}\big)\Bigg\}.
\end{equation}
Therefore, is $\beta(\mathbf{F})\leqslant 0$,
then $\delta_P(S,W^S_{\bullet,\bullet})\leqslant 1$.

\begin{remark}
\label{remark}
If $Z$ is a point and $\beta(\mathbf{F})\leqslant 0$, then it follows from \cite{AbbanZhuang,Book} that $\delta_P(S,W^S_{\bullet,\bullet})<1$.
\end{remark}

To estimate $\delta_P(S,W^S_{\bullet,\bullet})$, we set $D=P(u)\vert_S$. Then $D$ is ample for $u\in[0,\tau)$, and
$$
D^2=\left\{\aligned
&(2-u)(6-u) \ \text{ if } S\in|H-R|, \\
&2(3-2u)(5-2u) \ \text{ if } S\in|2H-E|.
\endaligned
\right.
$$
For $u\in[0,\tau)$, set
$$
\delta_P\big(S,D\big)=\inf_{\substack{F/S\\P\in C_S(F)}}\frac{A_S(F)}{S_D\big(F\big)},
$$
where
$$
S_D(F)=\frac{1}{D^2}\int\limits_0^{\infty}\mathrm{vol}\big(D-vF\big)dv,
$$
and the~infimum is taken by all prime divisors over  $S$ whose center on $S$ contains $P$.

\begin{lemma}[{cf. \cite[Lemma 27]{CheltsovFujitaKishimotoOkada}}]
\label{lemma:Nemuro}
Let $f\colon[1,\tau]\to\mathbb{R}_{>0}$ be a continuous positive function such that
$$
\delta_P\big(S,D\big)\geqslant f(u)
$$
for every $u\in[1,\tau)$. If $S\in|H-R|$ and $S\not\in R$, then
$$
\delta_P(S,W^S_{\bullet,\bullet})\geqslant\frac{1}{\frac{15}{22f(1)}+\frac{3}{22}\int\limits_1^2\frac{(2-u)(6-u)}{f(u)}du}.
$$
If $S\in|H-R|$ and $S\in R$, then
$$
\delta_P(S,W^S_{\bullet,\bullet})\geqslant\frac{1}{\frac{9}{88}+\frac{15}{22f(1)}+\frac{3}{22}\int\limits_1^2\frac{(2-u)(6-u)}{f(u)}du}.
$$
If $S\in|2H-E|$ and $S\not\in E$, then
$$
\delta_P(S,W^S_{\bullet,\bullet})\geqslant\frac{1}{\frac{9}{11f(1)}+\frac{3}{22}\int\limits_1^{\frac{3}{2}}\frac{2(3-2u)(5-2u)}{f(u)}du}.
$$
If $S\in|H-R|$ and $S\in E$, then
$$
\delta_P(S,W^S_{\bullet,\bullet})\geqslant\frac{1}{\frac{5}{176}+\frac{9}{11f(1)}+\frac{3}{22}\int\limits_1^{\frac{3}{2}}\frac{2(3-2u)(5-2u)}{f(u)}du}.
$$
\end{lemma}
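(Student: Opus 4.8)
The plan is to unwind the definition of $S(W^S_{\bullet,\bullet};F)$ and to split the outer integral at $u=1$, the value at which the Zariski decomposition of $-K_X-uS$ changes shape. By the very definition of $S_D(F)$ one has $\int_0^\infty\mathrm{vol}(P(u)|_S-vF)\,dv=(P(u)|_S)^2\,S_{P(u)|_S}(F)$, so, using $(-K_X)^3=22$,
\[
S\big(W^S_{\bullet,\bullet};F\big)=\frac{3}{22}\int_0^{\tau}\Big(\mathrm{ord}_F\big(N(u)|_S\big)+S_{P(u)|_S}(F)\Big)\big(P(u)|_S\big)^2\,du .
\]
To bound $\delta_P(S,W^S_{\bullet,\bullet})$ from below it suffices to bound $S(W^S_{\bullet,\bullet};F)/A_S(F)$ from above for every prime divisor $F$ over $S$ with $P\in C_S(F)$, and then to pass to reciprocals.

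On $[0,1]$ we have $N(u)=0$, so the first term of the integrand vanishes there. Since $S$ is a fibre of the fibration $\sigma$ (resp. $\varphi$), we have $S|_S\sim 0$, hence $P(u)|_S$ equals the constant ample class $P(1)|_S=-K_X|_S$ for every $u\in[0,1]$, and an intersection computation gives $(P(1)|_S)^2=5$ when $S\in|H-R|$ and $(P(1)|_S)^2=6$ when $S\in|2H-E|$ (the degrees of the del Pezzo surface $S$). Thus the contribution of $[0,1]$ to $S(W^S_{\bullet,\bullet};F)$ equals $\tfrac{15}{22}S_{P(1)|_S}(F)$ in the first case and $\tfrac{9}{11}S_{P(1)|_S}(F)$ in the second. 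Because $1\in[1,\tau)$, the hypothesis gives $\delta_P(S,P(1)|_S)\geqslant f(1)$, so $S_{P(1)|_S}(F)\leqslant A_S(F)/f(1)$, and this contribution is at most $\tfrac{15}{22f(1)}A_S(F)$ (resp. $\tfrac{9}{11f(1)}A_S(F)$).

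On $[1,\tau]$ I would treat the two terms of the integrand separately. For the term $S_{P(u)|_S}(F)$ the hypothesis $\delta_P(S,P(u)|_S)\geqslant f(u)$ gives $S_{P(u)|_S}(F)\leqslant A_S(F)/f(u)$ for $u\in[1,\tau)$; combined with $(P(u)|_S)^2=(2-u)(6-u)$ (resp. $2(3-2u)(5-2u)$) this yields the term $\tfrac{3}{22}A_S(F)\int_1^{2}\tfrac{(2-u)(6-u)}{f(u)}\,du$ (resp. the analogous integral over $[1,\tfrac32]$). For the term $\mathrm{ord}_F(N(u)|_S)$, recall $N(u)=(u-1)R$ (resp. $(u-1)E$), so $N(u)|_S=(u-1)\,R|_S$, where $R|_S$ is the reduced irreducible curve $R\cap S$. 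If $P\notin R$ then the centre $C_S(F)$ is not contained in $R|_S$, so $\mathrm{ord}_F(N(u)|_S)=0$ and this term contributes nothing — which gives the first and third displayed bounds. If $P\in R$ we use that the pair $(S,R|_S)$ is log canonical — here the hypothesis on the singular fibres is relevant, since it guarantees that $S$ carries at worst one $\mathbb{A}_1$ or $\mathbb{A}_2$ point and that $R|_S$ does not destroy log canonicity there — so that $\mathrm{ord}_F(R|_S)\leqslant A_S(F)$ for every $F$ over $S$; then this term is at most $\tfrac{3}{22}A_S(F)\int_1^{2}(u-1)(2-u)(6-u)\,du=\tfrac{9}{88}A_S(F)$, and likewise $\tfrac{3}{22}\int_1^{3/2}2(u-1)(3-2u)(5-2u)\,du=\tfrac{5}{176}$ in the $|2H-E|$ case. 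Adding the contributions of $[0,1]$ and $[1,\tau]$, dividing by $A_S(F)$, and inverting produces precisely the four stated inequalities.

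The merely routine parts are the intersection-number bookkeeping — $(P(1)|_S)^2=5$ or $6$, the formulas $(P(u)|_S)^2=(2-u)(6-u)$ on $[1,2]$ and $2(3-2u)(5-2u)$ on $[1,\tfrac32]$, and the evaluations $\int_1^2(u-1)(2-u)(6-u)\,du=\tfrac34$ and $\int_1^{3/2}2(u-1)(3-2u)(5-2u)\,du=\tfrac5{24}$. The one step that requires genuine input is the inequality $\mathrm{ord}_F(N(u)|_S)\leqslant(u-1)A_S(F)$ used when $P\in R$ (resp. $P\in E$): it amounts to log canonicity of $(S,R|_S)$ (resp. $(S,E|_S)$), and this is where the assumption on the singularities of the fibres of $\sigma$ is really being spent. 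I expect that — together with checking at the outset that $P(u)|_S$ is ample, not merely nef, throughout $[0,\tau)$ — to be the main obstacle.
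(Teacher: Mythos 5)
Your proposal follows the paper's proof essentially verbatim: split $S(W^S_{\bullet,\bullet};F)$ at $u=1$, use the constancy of $P(u)\vert_S=-K_S$ on $[0,1]$ together with $\delta_P\geqslant f(1)$, bound the volume term on $[1,\tau]$ by $A_S(F)D^2/f(u)$, and control the negative-part term via $\mathrm{ord}_F(N(u)\vert_S)\leqslant(u-1)A_S(F)$, which the paper justifies exactly as you do (the support of $N(u)\vert_S$ is a smooth curve in the smooth locus of $S$, i.e.\ the pair is log canonical), with the same integral evaluations $\tfrac{3}{4}$ and $\tfrac{5}{24}$ giving $\tfrac{9}{88}$ and $\tfrac{5}{176}$. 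The argument is correct and matches the paper's.
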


\begin{proof}
The proof is the same as the proof of \cite[Lemma 27]{CheltsovFujitaKishimotoOkada}.
Namely, fix a prime divisor $F$ over $S$. Write $S\big(W^S_{\bullet,\bullet};F\big)=R_1+R_2+R_3$ for
\begin{align*}
R_1&=\frac{3}{22}\int\limits_0^1\int\limits_0^{\infty}\mathrm{vol}\big(P(u)\big\vert_{S}-vF\big)dvdu,\\
R_2&=\frac{3}{22}\int\limits_1^\tau\int\limits_0^{\infty}\mathrm{vol}\big(P(u)\big\vert_{S}-vF\big)dvdu,\\
R_3&=\frac{3}{22}\int\limits_1^{\tau}\mathrm{ord}_F\big(N(u)\vert_{S}\big)\big(P(u)\vert_{S}\big)^2du.
\end{align*}
Then
$$
R_1\leqslant \frac{3(-K_S)^2}{22\delta_P(S,-K_S)}A_S(F)\leqslant \frac{3(-K_S)^2}{22f(1)}A_S(F).
$$
Similarly, we see that
$$
R_2\leqslant A_S(F)\frac{3}{22}\int\limits_1^\tau\frac{D^2}{f(u)}du.
$$
Finally, observe that $\mathrm{ord}_F(N(u)\vert_{S})\leqslant (u-1)A_S(F)$,
since $\mathrm{Supp}(N(u)\vert_S)$ is a smooth curve contained in the smooth locus of the surface $S$ for $u\in(1,\tau]$.
Hence, we have
$$
R_3\leqslant
A_S(F)\frac{3}{22}\int\limits_1^{\tau}(u-1)\big(P(u)\vert_{S}\big)^2du=
\left\{\aligned
&\frac{9}{88}A_S(F) \ \text{ if } S\in|H-R|, \\
&\frac{5}{176}A_S(F) \ \text{ if } S\in|2H-E|.
\endaligned
\right.
$$
If $P\not\in\mathrm{Supp}(N(u))$ for $u\in(1,\tau]$, then~$R_3=0$.
Combining our estimates, we complete the proof.
\end{proof}

To use Lemma~\ref{lemma:Nemuro}, we must bound $\delta_P(S,D)$ for $u\in[1,\tau)$.
This is done in the next four~propositions.

\begin{proposition}
\label{proposition:dP5-smooth}
Suppose that $S\in|H-R|$, and the surface $S$ is smooth. Then
$$
\delta_P(S,D)\geqslant \frac{3(6-u)}{u^2-10u+22}
$$
for every $u\in[1,2)$.
\end{proposition}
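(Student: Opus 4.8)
The plan is to run the Abban--Zhuang method on the surface $S$ itself: for a suitable irreducible curve $C\subset S$ passing through $P$, writing $D-vC=P_v+N_v$ for the Zariski decomposition, one has
$$
\delta_P(S,D)\geqslant\min\left\{\frac{A_S(C)}{S_D(C)},\ \inf_{Q\in C}\frac{A_C(Q)}{S\big(W^{C}_{\bullet,\bullet};Q\big)}\right\},
$$
where $S(W^{C}_{\bullet,\bullet};Q)$ is built from $P_v,N_v$ exactly as $S(W^S_{\bullet,\bullet};F)$ was built from $P(u),N(u)$, and where $C$ is chosen according to where $P$ sits relative to the ten lines of $S$. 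First I would fix the plane model $S=\mathrm{Bl}_{p_1,p_2,p_3,p_4}\mathbb{P}^2$, writing $\ell$ for the pull-back of a line and $E_1,\dots,E_4$ for the exceptional curves, so that $H|_S\sim R|_S\sim\ell$, $E|_S=E_1+E_2+E_3+E_4$, and $D=P(u)|_S\sim (4-u)\ell-E_1-E_2-E_3-E_4\sim -K_S-(u-1)\ell$, which is ample for $u\in[1,2)$ with $D^2=(2-u)(6-u)$. The ten lines are the four curves $E_i$, with $D\cdot E_i=1$, and the six strict transforms $L_{ij}$ of $\overline{p_ip_j}$, of class $\ell-E_i-E_j$, with $D\cdot L_{ij}=2-u$.

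The second step is to recognise the right-hand side of the proposition. One computes the Zariski decomposition of $D-vL_{12}$: it is nef on $[0,2-u]$; on $[2-u,1]$ its negative part is $(v-2+u)L_{34}$; on $[1,3-u]$ it is $(v-2+u)L_{34}+(v-1)(E_1+E_2)$, and $\tau_{L_{12}}=3-u$ with $P_v\equiv 0$ at the endpoint. Integrating $P_v^2$ over the three ranges gives $S_D(L_{12})=\frac{u^2-10u+22}{3(6-u)}$, so the bound claimed in the proposition is exactly $A_S(L_{ij})/S_D(L_{ij})$; a parallel computation on $L_{12}\cong\mathbb{P}^1$ shows $\sup_{Q\in L_{12}}S(W^{L_{12}}_{\bullet,\bullet};Q)$ equals the same value, attained at $Q=L_{12}\cap L_{34}$. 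Thus the bound is sharp precisely when $P$ lies on one of the six lines $L_{ij}$, which already tells us how the argument must be organised.

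Now the case division for the point $P$. If $P\in L_{ij}$, take $C=L_{ij}$: the first term of the minimum is then $\frac{3(6-u)}{u^2-10u+22}$, and using the three-phase decomposition restricted to $L_{ij}$, keeping track of $\mathrm{ord}_Q(N_v|_{L_{ij}})$ at the points $L_{ij}\cap E_i$, $L_{ij}\cap E_j$ and $L_{ij}\cap L_{kl}$, one checks $S(W^{L_{ij}}_{\bullet,\bullet};Q)\leqslant\frac{u^2-10u+22}{3(6-u)}$ for every $Q\in L_{ij}$. If $P\in E_i$ and $P$ lies on no $L_{jk}$, take $C=E_i$: here $D-vE_i$ is nef on $[0,2-u]$, then acquires $L_{ij}+L_{ik}+L_{il}$ in its negative part, reaching the pseudoeffective threshold $\tau_{E_i}=4-2u$ where the positive part is $(u-1)(2\ell-E_1-E_2-E_3-E_4)$; this yields $S_D(E_i)=\frac{u^2-16u+28}{3(6-u)}\leqslant\frac{u^2-10u+22}{3(6-u)}$, together with the analogous bound for $S(W^{E_i}_{\bullet,\bullet};Q)$ at $Q\in E_i$ off the $L_{jk}$. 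Finally, if $P$ lies on none of the ten lines, take $C$ to be the member through $P$ of the conic pencil $|2\ell-E_1-E_2-E_3-E_4|$, an irreducible smooth rational curve avoiding the support of every $N_v$; the conic through $p_1,\dots,p_4$ and $P$ handles the values of $u$ bounded away from $1$, and for $u$ close to $1$ one either passes to the del Pezzo surface $\mathrm{Bl}_P S$ of degree $4$ — on which that conic becomes a $(-1)$-curve — or plays the estimates coming from the several conics through $P$ off against one another.

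The main obstacle is this last case: the natural auxiliary curves through a general point, a twisted cubic in $|\ell|$ or a single conic, give values of $S(W^{C}_{\bullet,\bullet};P)$ that slightly exceed $\frac{u^2-10u+22}{3(6-u)}$ when $u$ is close to $1$, so a genuine sharpening (the blow-up reduction, or combining several conics) is unavoidable there. Everything else is the bookkeeping of the several multi-phase Zariski decompositions and the separate treatment of the finitely many intersection points on the chosen line; those computations are lengthy but mechanical.
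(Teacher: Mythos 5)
Your plan is essentially the paper's own proof: the paper reduces Proposition~\ref{proposition:dP5-smooth} to Corollary~\ref{corollary:dP5} of the Appendix via the substitution $a=4-u$, and that corollary is proved by exactly your case analysis — flags along the $(-1)$-curves through $P$ (Lemmas~\ref{lemma:dP5-e1-e2-e3-e4} and~\ref{lemma:dP5-lines}, whose Zariski decompositions and values, e.g. $S_D(\mathbf{l}_{ij})=\frac{a^2+2a-2}{3(a+2)}$ and $S_D(\mathbf{e}_i)=\frac{(a-2)(a+10)}{3(a+2)}$, match the ones you state) and, at a point off all ten lines, the blow-up of $P$ with the exceptional curve as flag (Lemma~\ref{lemma:dP5-general-point}). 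The sharpening you correctly anticipate being needed near $u=1$ is precisely this blow-up computation, which the paper carries out for all $u\in[1,2)$, with your conic reappearing as the $(-1)$-curve $\mathbf{c}_0$ in the negative parts $\widetilde{N}(v)$.
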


\begin{proof}
Use\,Corollary~\ref{corollary:dP5}\,with\,$a=4-u$.\,If\,$u=1$,\,this\,follows\,from\,\cite[Proposition 2.7]{Akaike}\,or\,\mbox{\cite[\S~5.1]{Denisova}}.
\end{proof}

\begin{proposition}
\label{proposition:dP5-A1}
Let $S$ be the surface in $|H-R|$ containing $P$.
Suppose that $S$ has one singular~point, which is a singular~point of type $\mathbb{A}_1$.
Let $C$ be the fiber of the conic bundle $\phi\vert_S\colon S\to\mathbb{P}^1$ that contains the singular point of the surface $S$.
If $P\not\in C$ and $P\not\in E$, then
$$
\delta_P(S,D)\geqslant\frac{3(6-u)}{u^2-10u+22}
$$
for every $u\in[1,2)$. Similarly, if $P\not\in C$ and $P\in E$, then
$$
\delta_P(S,D)\geqslant
\left\{\aligned
&\frac{1}{2-u} \ \text{ if } 1\leqslant u\leqslant \frac{7-\sqrt{21}}{2}, \\
&\frac{3(6-u)}{u^2-10u+22}\ \text{ if } \frac{7-\sqrt{21}}{2}\leqslant u<2.
\endaligned
\right.
$$
\end{proposition}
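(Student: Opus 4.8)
The plan is to pass to the minimal resolution of $S$ and run the Abban--Zhuang method along a carefully chosen flag. Since a quartic elliptic curve in $\mathbb{P}^3$ is a complete intersection of two quadrics, it has no trisecant lines, so an $\mathbb{A}_1$ point on $S$ can only arise from $\pi(S)$ being tangent to $C_4$ at one point $q$ with contact of order exactly two; thus $S$ is the blow up of $\pi(S)\cong\mathbb{P}^2$ at the length‑$4$ subscheme $2q+q_3+q_4$. Let $\rho\colon\widetilde{S}\to S$ be the minimal resolution: then $\widetilde{S}$ is $\mathbb{P}^2$ blown up successively at $q$, at the point of the first exceptional curve cut out by the tangent direction of $C_4$, and at $q_3,q_4$, with exceptional classes $e_1,\dots,e_4$, with $\widetilde{\ell}=\rho^*(H|_S)$, and with $\widetilde{E}=e_1-e_2$ the $(-2)$‑curve contracted by $\rho$. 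One checks that $\rho^*D\equiv(4-u)\widetilde{\ell}-e_1-e_2-e_3-e_4$, that $E|_S$ pulls back to a divisor supported on $\widetilde{E}\cup e_2\cup e_3\cup e_4$, and that the conic bundle $\phi|_S$ has exactly two singular fibres, namely $C=\ell_{13}+\widetilde{E}+\ell_{14}$ (through the $\mathbb{A}_1$ point) and $\ell_{12}+\ell_{34}$, where $\ell_{1j}=\widetilde{\ell}-e_1-e_j$ and $\ell_{34}=\widetilde{\ell}-e_3-e_4$. Hence the hypotheses $P\notin C$, $P\in E$ say precisely that $P$ lies in the smooth locus of $S$ and $\rho^{-1}(P)$ is a point of $e_2$, $e_3$ or $e_4$ lying off $\widetilde{E}\cup\ell_{13}\cup\ell_{14}$; in particular $\delta_P(S,D)=\delta_{\rho^{-1}(P)}(\widetilde{S},\rho^*D)$.

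For the first assertion ($P\notin C$ and $P\notin E$) the point $P$ is exactly in the situation covered by Corollary~\ref{corollary:dP5}, which I would invoke with $a=4-u$ as in the proof of Proposition~\ref{proposition:dP5-smooth}. For the second assertion I would apply the Abban--Zhuang inequality \cite{AbbanZhuang} on $\widetilde{S}$ with the flag $\rho^{-1}(P)\in\Gamma\subset\widetilde{S}$, taking $\Gamma=e_i$ when $\rho^{-1}(P)$ is a general point of $e_i$ $(i\in\{2,3,4\})$, $\Gamma=\ell_{34}$ when $\rho^{-1}(P)\in e_3\cap\ell_{34}$ or $e_4\cap\ell_{34}$, and $\Gamma=\ell_{12}$ when $\rho^{-1}(P)\in e_2\cap\ell_{12}$. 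In each case $A_S(\Gamma)=1$, and one must compute the Zariski decomposition of $\rho^*D-v\Gamma$, whose walls sit at $v=2-u$ and $v=1$ with pseudo‑effective threshold $4-2u$ for $\Gamma\in\{e_2,e_3,e_4\}$ and $3-u$ for $\Gamma\in\{\ell_{12},\ell_{34}\}$, then read off
$$
S_D(\ell_{12})=S_D(\ell_{34})=\frac{u^2-10u+22}{3(6-u)},\qquad S_D(e_3)=S_D(e_4)=\frac{(2-u)(14-u)}{3(6-u)},\qquad S_D(e_2)=2-u,
$$
together with the corresponding refined invariants $S\bigl(W^\Gamma_{\bullet,\bullet};P\bigr)$ (for instance $\frac{2u^2-11u+20}{3(6-u)}$ at a general point of $e_3$, $\frac{u^2-10u+28}{6(6-u)}$ at a general point of $e_2$, and $2-u$ at $e_2\cap\ell_{12}$), and finally conclude $\delta_P(S,D)\ge\min\bigl\{1/S_D(\Gamma),\,1/S(W^\Gamma_{\bullet,\bullet};P)\bigr\}$.

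It then remains to verify elementary inequalities on $[1,2]$. For $\Gamma\in\{\ell_{12},\ell_{34},e_3,e_4\}$ the minimum above is at least $\frac{3(6-u)}{u^2-10u+22}$ (the comparisons collapse to $2(u-2)^2\ge0$, $6(u-1)\ge0$ and $(u-2)(u+1)\le0$), and $\frac{3(6-u)}{u^2-10u+22}\ge\frac1{2-u}$ exactly on $[1,\tfrac{7-\sqrt{21}}2]$ since this inequality is $u^2-7u+7\ge0$; hence in these cases $\delta_P(S,D)$ dominates the asserted piecewise function. For $\Gamma=e_2$ the minimum is $\min\bigl\{\tfrac1{2-u},\tfrac{6(6-u)}{u^2-10u+28}\bigr\}$, and since $\tfrac{6(6-u)}{u^2-10u+28}\ge\tfrac1{2-u}$ throughout $[1,\tfrac{7-\sqrt{21}}2]$ while $\tfrac{6(6-u)}{u^2-10u+28}\ge\tfrac{3(6-u)}{u^2-10u+22}$ for all $u\le2$ (this last being $u^2-10u+16\ge0$), again $\delta_P(S,D)$ is at least the asserted bound.

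I expect the only genuine obstacle to be the case $\Gamma=e_2$, i.e. when $P$ lies on the $(-1)$‑curve meeting the resolution $(-2)$‑curve $\widetilde{E}$. The decisive point there is that $\rho^*D\cdot\widetilde{E}=0$ while $e_2\cdot\widetilde{E}=1$, so $\widetilde{E}$ enters the negative part of $\rho^*D-ve_2$ already for every $v>0$; this extra negative contribution is exactly what pulls $S_D(e_2)$ down to $2-u$ and is responsible for the weaker bound $\tfrac1{2-u}$ on the left part of the range. Beyond tracking this Zariski decomposition correctly, the remaining delicate checks are that no further curve enters the negative part before the stated thresholds $v=4-2u$ and $v=3-u$, and, at the special points $P\in e_i\cap\ell$, that the $\mathrm{ord}_P\bigl(N(u)|_\Gamma\bigr)$ term in $S(W^\Gamma_{\bullet,\bullet};P)$ does not weaken the estimate.
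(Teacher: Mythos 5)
Your treatment of the case $P\in E$ is, up to a change of model, the paper's own argument: the paper proves the proposition by quoting Corollary~\ref{corollary:dP5-A1} with $a=4-u$, and the ingredients of that corollary (Lemmas~\ref{lemma:dP5-A1-e3}, \ref{lemma:dP5-A1-e1-e2}, \ref{lemma:dP5-A1-L3}, \ref{lemma:dP5-A1-L4}, \ref{lemma:dP5-A1-general-point}) are precisely the flag computations you sketch, carried out directly on the singular surface $S$ with the fractional intersection form rather than on the minimal resolution. Your values translate correctly under $a=4-u$: $S_D(e_2)=a-2$, $S(W^{e_2}_{\bullet,\bullet};P)=\frac{a^2+2a+4}{6(a+2)}$ at a general point of $e_2$, $S(W^{\ell_{12}}_{\bullet,\bullet};P)=a-2$ at $e_2\cap\ell_{12}$, $S_D(\ell_{12})=S_D(\ell_{34})=\frac{a^2+2a-2}{3(a+2)}$, and the elementary comparisons in your last paragraph are correct; the reduction $\delta_P(S,D)=\delta_{\rho^{-1}(P)}(\widetilde S,\rho^*D)$ is legitimate since $\rho$ is crepant and $P$ is a smooth point (note only that $\rho^*D$ is nef and big but not ample, $\rho^*D\cdot\widetilde E=0$, so you must use the big-and-nef form of the Abban--Zhuang estimate). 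One small internal inconsistency: your blanket claim that for $\Gamma\in\{\ell_{12},\ell_{34},e_3,e_4\}$ the Abban--Zhuang minimum is at least $\frac{3(6-u)}{u^2-10u+22}$ contradicts your own value $S(W^{\ell_{12}}_{\bullet,\bullet};P)=2-u$ at $P=e_2\cap\ell_{12}$; this is harmless because at that point $P\in E$ and the asserted bound is the weaker piecewise one, but the claim should be restricted.

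The genuine gap is the first assertion. You dispose of the case $P\notin C$, $P\notin E$ by declaring $P$ to be ``exactly in the situation covered by Corollary~\ref{corollary:dP5}''. It is not: that corollary concerns the smooth quintic del Pezzo surface, i.e.\ the blow up of $\mathbb{P}^2$ at four points in general position, whereas your $\widetilde S$ is a weak del Pezzo surface (one of the four points is infinitely near), and $\delta_P(S,D)$ is a global invariant, so it cannot be imported from a different surface even when $P$ lies far from the $(-2)$-curve. The Zariski chamber structure really is different: on $\widetilde S$ the classes $h-e_2-e_3$ and $h-e_2-e_4$ are not effective while $\widetilde E$ is, so for instance for the flag corresponding to $\mathbf{e}_1$ the negative part is $(v+2-a)(2\mathbf{l}_1+\mathbf{l}_4)$ (Lemma~\ref{lemma:dP5-A1-e1-e2}) rather than three lines with coefficient one, and for the flag over a general point the curve $\mathbf{c}_3$ of Lemma~\ref{lemma:dP5-A1-general-point} enters the negative part with multiplicity two, producing the extra term $\frac{3(a+2)}{(a-2)(14-a)}$ which has no analogue in Lemma~\ref{lemma:dP5-general-point} and must be checked not to lower the bound (it does not, but this requires the computation). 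In other words, the fact that the resulting formula coincides with the smooth case is a conclusion of redoing the Zariski decompositions for points on $\mathbf{l}_3$, $\mathbf{l}_4$ and general points of the singular surface -- this is exactly what the paper does in Lemmas~\ref{lemma:dP5-A1-L3}, \ref{lemma:dP5-A1-L4} and \ref{lemma:dP5-A1-general-point} -- and it does not follow from Corollary~\ref{corollary:dP5}. Since your framework for the second assertion would handle these cases by the same kind of flag computations, the gap is reparable, but as written the first assertion is unproved.
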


\begin{proof}
Use\,Corollary\,\ref{corollary:dP5-A1}\,with\,$a=4-u$.\,If\,$u=1$,\,this\,follows\,from\,\cite[Proposition 2.1]{Akaike}\,or\,\cite[\S~5.2]{Denisova}.
\end{proof}

\begin{proposition}
\label{proposition:dP5-A2}
Let $S$ be the surface in $|H-R|$ containing $P$.
Suppose that $S$ has one singular~point, which is a singular point of type $\mathbb{A}_2$.
Let $C$ be the fiber of the conic bundle $\phi\vert_S\colon S\to\mathbb{P}^1$ that contains the singular point of the surface $S$.
If $P\not\in C$ and $P\not\in E$, then
$$
\delta_P(S,D)\geqslant
\left\{\aligned
&\frac{6(6-u)}{(2-u)(22+u)}\ \text{ if } 1\leqslant u\leqslant \frac{1+\sqrt{21}}{5}, \\
&\frac{4(6-u)}{u^2-14u+28}\ \text{ if } \frac{1+\sqrt{21}}{5}\leqslant u\leqslant \sqrt{5}-1, \\
&\frac{2(6-u)}{u^2-6u+12}\ \text{ if } \sqrt{5}-1\leqslant u<2.
\endaligned
\right.
$$
Similarly, if $P\not\in C$ and $P\in E$, then
$$
\delta_P(S,D)\geqslant
\left\{\aligned
&\frac{6(6-u)}{(2-u)(38-7u)} \ \text{ if } 1\leqslant u\leqslant \frac{7-\sqrt{17}}{2}, \\
&\frac{6(6-u)}{u^2-10u+28}\ \text{ if } \frac{7-\sqrt{17}}{2}\leqslant u<2.
\endaligned
\right.
$$
\end{proposition}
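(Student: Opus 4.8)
The plan is to deduce Proposition~\ref{proposition:dP5-A2} from a single general estimate for $\delta_P$ on a polarized quintic del Pezzo surface with one $\mathbb{A}_2$ point, exactly as Propositions~\ref{proposition:dP5-smooth} and~\ref{proposition:dP5-A1} are deduced from Corollaries~\ref{corollary:dP5} and~\ref{corollary:dP5-A1}. For $u\in[1,2)$ one has $P(u)=(4-u)H-E$, hence $D=P(u)\vert_S=(4-u)H\vert_S-E\vert_S$; writing $a=4-u\in(2,3]$ this is a divisor of the shape $aH\vert_S-E\vert_S$ with $D^2=a^2-4=(2-u)(6-u)$, which is precisely the input of the relevant corollary. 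At $u=1$ we have $a=3$ and $D\sim_{\mathbb{R}}-K_S$, and the asserted bound is the known value of the local $\delta$-invariant of a degree-$5$ del Pezzo surface with an $\mathbb{A}_2$ point computed in \cite{Akaike} and \cite{Denisova}. So the substance is the range $u\in(1,2)$, whose proof I now outline.

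Work on the minimal resolution $\rho\colon\widetilde S\to S$: the $\mathbb{A}_2$ point is resolved by two $(-2)$-curves $G_1,G_2$ with $G_1\cdot G_2=1$, and the conic bundle $\phi\vert_S\colon S\to\mathbb{P}^1$ has one reducible fibre $C$ through the singular point. Fix $P\in S$ with $P\notin C$; then $P$ is a smooth point of $S$. Following the Abban--Zhuang method on the surface, one chooses an irreducible curve $B\subset S$ through $P$ — in the case $P\notin E$ either the fibre of $\phi\vert_S$ through $P$ or a suitable line on $S$ through $P$, and in the case $P\in E$ a component of $E\vert_S$ through $P$ — computes $\tau_B=\mathrm{sup}\{v\geqslant 0\mid D-vB\text{ is pseudo-effective}\}$ together with the Zariski decompositions $D-vB=P_B(v)+N_B(v)$ on $S$, and uses that, since $B\cong\mathbb{P}^1$ with $A_S(B)=1$,
$$
\delta_P(S,D)\geqslant\min\left\{\frac{1}{S_D(B)},\ \frac{1}{\frac{2}{D^2}\int_0^{\tau_B}\big(P_B(v)\cdot B\big)\,\mathrm{ord}_P\big(N_B(v)\vert_B\big)\,dv+\frac{1}{D^2}\int_0^{\tau_B}\big(P_B(v)\cdot B\big)^2\,dv}\right\}.
$$
Both quantities on the right are explicit piecewise-rational functions of $u$; evaluating them for the finitely many positions of $P$ and retaining, for each $u$, the best available choice of $B$ gives the four stated lower bounds, and the thresholds $\frac{1+\sqrt{21}}{5}$, $\sqrt5-1$ and $\frac{7-\sqrt{17}}{2}$ appear as the roots in $(1,2)$ of the quadratics $5u^2-2u-4$, $u^2+2u-4$ and $u^2-7u+8$, i.e.\ as the values of $u$ at which two of these rational bounds cross.

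The main obstacle will be the explicit Zariski decomposition of $D-vB$ on the \emph{singular} surface $S$ across the whole interval $v\in[0,\tau_B]$: as $v$ increases the negative part successively absorbs the $(-2)$-curves $G_1,G_2$ of the $\mathbb{A}_2$ chain — and, in the case $P\in E$, also certain lines on $S$ — so that $\mathrm{vol}(D-vB)$ and $P_B(v)\cdot B$ are only piecewise polynomial in $v$, and one must carry the $u$-dependence cleanly through these chamber walls for the final bounds to collapse to the stated rational functions. A secondary point is to check that the curves $B$ above are genuinely optimal among all prime divisors over $S$ centred at $P$ — equivalently that $P\in C$ is the only configuration requiring a different curve — and that passing between $S$ and $\widetilde S$ is done consistently, for instance that $A_S(B)=1$ for the lines used because they avoid the $\mathbb{A}_2$ point.
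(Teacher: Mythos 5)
Your overall reduction is the same as the paper's: set $a=4-u$, note $D=P(u)\vert_S=aH\vert_S-E\vert_S$ with $D^2=a^2-4$, and prove a lower bound for $\delta_P(S,D)$ on the polarized $\mathbb{A}_2$ quintic del Pezzo surface (the paper simply invokes Corollary~\ref{corollary:dP5-A2}, i.e.\ Lemmas~\ref{lemma:dP5-A2-e2}, \ref{lemma:dP5-A2-e1-e2}, \ref{lemma:dP5-A2-general-point} of the appendix, and cites Akaike/Denisova at $u=1$). Your treatment of the case $P\in E$, using a component of $E\vert_S$ (i.e.\ $\mathbf{e}_1$ or $\mathbf{e}_2$ in the notation of \S\ref{subsection:dP5-A2}) as the flag curve, is exactly Lemmas~\ref{lemma:dP5-A2-e2} and~\ref{lemma:dP5-A2-e1-e2}, and your identification of the thresholds as crossing points of the constituent rational bounds is correct.

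The gap is in the main case $P\notin C$ and $P\notin E$. Such a point lies on none of the negative curves $\mathbf{e}_1,\mathbf{e}_2,\mathbf{l}_1,\mathbf{l}_2$ of $S$, so no line of $S$ passes through it, and of the flags you propose only the conic fibre $B\sim 2\mathbf{h}-\mathbf{e}_1-3\mathbf{e}_2$ of $\phi\vert_S$ through $P$ is available; that flag is too weak. Indeed, for $v\in[0,1]$ the divisor $D-vB$ is nef, and for $v=1+t$ one has $P(v)=(a-2-2t)\mathbf{h}$ and $N(v)=t(\mathbf{e}_1+3\mathbf{e}_2)$, whence $S_D(B)=\frac{a^2+2a+4}{6(a+2)}$ and $S(W^B_{\bullet,\bullet};P)=\frac{2(a-2)(a+4)}{3(a+2)}$ (the $\mathrm{ord}_P$ term vanishes since $P\notin E$). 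So \eqref{equation:AZ-surface} gives only $\delta_P(S,D)\geqslant\min\bigl\{\tfrac{6(6-u)}{u^2-10u+28},\,\tfrac{3(6-u)}{2(2-u)(8-u)}\bigr\}$, which is strictly below the claimed bound for $u$ close to $1$: at $u=1.05$ it gives roughly $1.12$, while the first branch $\tfrac{6(6-u)}{(2-u)(22+u)}$ of the proposition is roughly $1.36$, and the deficit persists on a whole subinterval of $(1,2)$. This is precisely why the paper, for points off all negative curves, does \emph{not} use a curve flag on $S$ at all: in Lemma~\ref{lemma:dP5-A2-general-point} it blows up $P$, applies \eqref{equation:AZ-surface-blow-up} to the exceptional curve (log discrepancy $2$), and refines over the points of that curve lying on the negative curves $\mathbf{c}_0,\mathbf{c}_1,\mathbf{c}_2$ of the blown-up surface; the three branches $\tfrac{2(6-u)}{u^2-6u+12}$, $\tfrac{4(6-u)}{u^2-14u+28}$, $\tfrac{6(6-u)}{(2-u)(22+u)}$ are exactly the resulting quantities $\tfrac{2}{S_D(E)}$ and $1/S(W^E_{\bullet,\bullet};O)$. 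Without this blow-up step (or a verified substitute flag), your outline does not prove the stated inequality in the case $P\notin C\cup E$.
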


\begin{proof}
Use\,Corollary\,\ref{corollary:dP5-A2}\,with\,$a=4-u$.\,If\,$u=1$,\,this\,follows\,from\,\cite[Proposition 2.4]{Akaike}\,or\,\cite[\S~5.6]{Denisova}.
\end{proof}

\begin{proposition}
\label{proposition:dP6-smooth}
Let $S$ be the surface in  $|2H-E|$ containing $P$.
Suppose that $S$~is~smooth. Then
$$
\delta_P(S,D)\geqslant
\left\{\aligned
&\frac{1}{2-u} \ \text{ if $P$ is contained in a $(-1)$-curve and $1\leqslant u<\frac{3}{2}$}, \\
&\frac{2(5-2u)}{4u^2-18u+19}\ \text{ if $P$ is not contained in a $(-1)$-curve and $1\leqslant u\leqslant \frac{9-\sqrt{21}}{4}$},\\
&\frac{3(5-2u)}{4u^2-18u+21}\ \text{ if $P$ is not contained in a $(-1)$-curve and $\frac{9-\sqrt{21}}{4}\leqslant u<\frac{3}{2}$}.
\endaligned
\right.
$$
\end{proposition}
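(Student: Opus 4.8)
The plan is to bound the $\delta$-invariant $\delta_P(S,D)$ of the polarized surface $(S,D)$ by the Abban--Zhuang method applied one more dimension down, exactly as in Propositions~\ref{proposition:dP5-smooth}--\ref{proposition:dP5-A2}; as there, the cleanest formulation is for the polarization $ah-e_1-e_2$ with an abstract parameter $a\in(1,2]$, specialized to $a=4-2u$. To set it up: since $S\in|2H-E|$ is the strict transform of a smooth quadric $Q\supset C_4$ meeting $L$ transversally, $S$ is the blow up of $Q\cong\mathbb{P}^1\times\mathbb{P}^1$ at the two points $Q\cap L$, hence a smooth sextic del Pezzo surface with the usual hexagon of six $(-1)$-curves; two of them, $e_1$ and $e_2$, are the components of $R|_S$, and the other four are the strict transforms $\ell-e$ of the two rulings of $Q$ through the blown up points. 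Writing $h=H|_S$ for the pullback of the $(1,1)$-class, one has $-K_S\sim 2h-e_1-e_2$, $R|_S=e_1+e_2$, and, for $u\in[1,\tfrac32)$,
\[
D\sim_{\mathbb{R}}(4-2u)h-e_1-e_2,\qquad D^2=2(3-2u)(5-2u),
\]
with $D$ ample on this range because $D\cdot e_i=1$ and $D\cdot(\ell-e)=3-2u>0$. At $u=1$ one has $D=-K_S$ and the estimate is the known value of $\delta_P$ for the anticanonical smooth sextic del Pezzo surface (it follows from \cite{Akaike} or \cite{Denisova}); so assume $u\in(1,\tfrac32)$.

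If $P$ lies on a $(-1)$-curve, I take the flag $S\supset C\supset\{P\}$ with $C$ a $(-1)$-curve through $P$ for which $D\cdot C$ is maximal (so $C=e_i$ if $P\in e_1\cup e_2$, and one of the other four otherwise, noting $D\cdot e_i=1$ while $D\cdot(\ell-e)=3-2u$). The Abban--Zhuang inequality gives $\delta_P(S,D)\geqslant\min\{1/S_D(C),\ 1/S(W^C_{\bullet,\bullet};P)\}$, where $W^C_{\bullet,\bullet}$ is built from the Zariski decomposition $D-vC=P_C(v)+N_C(v)$ on $S$ exactly as $W^S_{\bullet,\bullet}$ is built from $-K_X-uS$, and, since $C\cong\mathbb{P}^1$,
\[
S\bigl(W^C_{\bullet,\bullet};P\bigr)=\frac{1}{D^2}\int_0^{\tau_C}\bigl(\deg P_C(v)\vert_C\bigr)^2dv+\frac{2}{D^2}\int_0^{\tau_C}\mathrm{ord}_P\bigl(N_C(v)\vert_C\bigr)\deg\bigl(P_C(v)\vert_C\bigr)dv.
\]
I compute the Zariski chambers of $D-vC$ --- two or three of them, depending on $u$ and on the type of $C$, since for $u$ near $\tfrac32$ a further wall brings in the pair of $(-1)$-curves opposite to $C$ in the hexagon --- read off $S_D(C)$ and $S(W^C_{\bullet,\bullet};P)$ as explicit functions of $u$, and verify that both are $\leqslant 2-u$ on $[1,\tfrac32)$. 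At $u=1$ one gets $S_{-K_S}(C)=1$, and the correction term forces $S(W^C_{\bullet,\bullet};P)=1$ at the vertices of the hexagon, so $\tfrac1{2-u}=1$ is attained there --- precisely the worst points of the K-polystable surface $(S,-K_S)$.

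If $P$ lies on no $(-1)$-curve, I blow $P$ up; let $G\cong\mathbb{P}^1$ be the exceptional curve and $\widehat D$ the pullback of $D$. Applying the Abban--Zhuang inequality to $G$ gives
\[
\delta_P(S,D)\geqslant\min\Bigl\{A_S(G)/S_D(G),\ \inf_{Q\in G}1/S\bigl(W^G_{\bullet,\bullet};Q\bigr)\Bigr\},
\]
with $W^G_{\bullet,\bullet}$ built from the Zariski decomposition of $\widehat D-vG$. Computing $S_D(G)$ from the chambers of $\widehat D-vG$ --- whose negative part past the first wall is a multiple of the sum of the three pairwise disjoint strict transforms of the conic-bundle fibres through $P$ --- gives $A_S(G)/S_D(G)=\tfrac{2(5-2u)}{4u^2-18u+19}$. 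For the second term, the only points $Q\in G$ that can beat a general one are the three where those strict transforms meet $G$, and there the estimate of $S(W^G_{\bullet,\bullet};Q)$ reduces to $S_D(C)$ for a conic-bundle fibre $C$ through $P$, giving $1/S_D(C)=\tfrac{3(5-2u)}{4u^2-18u+21}$. These two bounds are equal exactly when $4u^2-18u+15=0$, i.e.\ at $u=\tfrac{9-\sqrt{21}}{4}$, so the smaller of the two on each subinterval is the claimed estimate.

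The main obstacle is the Zariski-decomposition bookkeeping in both cases. Unlike in degree $5$, the divisors $D-vC$ on $S$ and $\widehat D-vG$ on the blow up move through several chambers whose walls depend on $u$ --- as $u\to\tfrac32$ all four curves $\ell-e$ become nearly orthogonal to $D$ while $h$ itself is only nef --- so the volume functions $v\mapsto\mathrm{vol}(D-vC)$, $v\mapsto\mathrm{vol}(\widehat D-vG)$ and the associated boundary degrees are genuinely piecewise in $v$, and the chamber structure must be identified uniformly for $u\in[1,\tfrac32)$; admissibility of the flags additionally requires tracking which $(-1)$-curves lie over which points. Once that is done, the Abban--Zhuang inequalities, the one-variable volume integrals, and the final comparison of rational functions of $u$ are all routine.
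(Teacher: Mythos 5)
Your proposal takes essentially the same route as the paper: reduce to the polarized smooth sextic del Pezzo surface with $D=a(\mathbf{h}_1+\mathbf{h}_2)-\mathbf{e}_1-\mathbf{e}_2$, $a=4-2u\in(1,2]$, handle $u=1$ by citing Akaike/Denisova, use a $(-1)$-curve through $P$ as the flag when $P$ lies on one (this is Lemmas~\ref{lemma:dP6-e1-e2} and \ref{lemma:dP6-lines}) and the exceptional curve of the blow-up at $P$ otherwise (Lemma~\ref{lemma:dP6-general-point}), and your stated values $A_S(G)/S_D(G)=\tfrac{2(5-2u)}{4u^2-18u+19}$, the bound $\tfrac{3(5-2u)}{4u^2-18u+21}$ at the special points of $G$, and the crossover at $u=\tfrac{9-\sqrt{21}}{4}$ all agree with the paper's computations. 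One cosmetic inaccuracy: for $u>1$ the negative part of $f^*(D)-vG$ picks up the strict transform of the $(1,1)$-conic through $P$ at the wall $v=2(3-2u)$ and the two ruling fibres only at the later wall $v=4-2u$ (with different coefficients), rather than all three curves at a single wall, but this does not affect your (correct) final formulas.
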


\begin{proof}
Apply Lemmas~\ref{lemma:dP6-e1-e2}, \ref{lemma:dP6-lines} and \ref{lemma:dP6-general-point} with $a=4-2u$.
If $u=1$, the required inequality follows from \cite[Proposition 3.6]{Akaike} or \cite[\S~4.1]{Denisova}.
\end{proof}

Now, applying Lemma~\ref{lemma:Nemuro} together with Proposition~\ref{proposition:dP5-smooth}, we get

\begin{corollary}
\label{corollary:dP5-smooth}
Let $S$ be the surface in $|H-R|$ such that $P\in S$. If $S$ is smooth, then $\delta_P(S,W^S_{\bullet,\bullet})>1$.
\end{corollary}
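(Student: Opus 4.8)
The plan is to feed Proposition~\ref{proposition:dP5-smooth} directly into Lemma~\ref{lemma:Nemuro}. Since $S$ is a smooth surface in the pencil $|H-R|$, Proposition~\ref{proposition:dP5-smooth} shows that we may take
\[
f(u)=\frac{3(6-u)}{u^2-10u+22}
\]
as the auxiliary function: it is continuous and strictly positive on the whole interval $[1,\tau]=[1,2]$ (the quadratic $u^2-10u+22$ has roots $5\pm\sqrt{3}$, both outside $[1,2]$, and $6-u>0$ there), and it satisfies $\delta_P(S,D)\geqslant f(u)$ for every $u\in[1,2)$. So the hypotheses of Lemma~\ref{lemma:Nemuro} are met with $\tau=2$.

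It then remains to evaluate the resulting lower bound, which is a short computation. One finds $f(1)=\tfrac{15}{13}$, hence $\tfrac{15}{22f(1)}=\tfrac{13}{22}$, while
\[
\frac{(2-u)(6-u)}{f(u)}=\frac{(2-u)(u^2-10u+22)}{3}=\frac{-u^3+12u^2-42u+44}{3},
\]
so that $\tfrac{3}{22}\int_1^2\tfrac{(2-u)(6-u)}{f(u)}\,du=\tfrac{1}{22}\int_1^2(-u^3+12u^2-42u+44)\,du=\tfrac{1}{22}\cdot\tfrac{21}{4}=\tfrac{21}{88}$. Plugging this into the first two bounds of Lemma~\ref{lemma:Nemuro}, we get
\[
\delta_P\big(S,W^S_{\bullet,\bullet}\big)\geqslant\frac{1}{\tfrac{13}{22}+\tfrac{21}{88}}=\frac{88}{73}>1
\]
when $P\notin R$, and
\[
\delta_P\big(S,W^S_{\bullet,\bullet}\big)\geqslant\frac{1}{\tfrac{9}{88}+\tfrac{13}{22}+\tfrac{21}{88}}=\frac{44}{41}>1
\]
when $P\in R$, i.e. when the extra summand $\tfrac{9}{88}$ (coming from $R_3$ in the proof of Lemma~\ref{lemma:Nemuro}) is present. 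In both cases $\delta_P(S,W^S_{\bullet,\bullet})>1$, as claimed.

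The only point requiring a little care is the case split of Lemma~\ref{lemma:Nemuro}: whether the center of $\mathbf{F}$ forces $P$ to lie on $\mathrm{Supp}(N(u)\vert_S)=R\cap S$, which is exactly what governs the appearance of the $\tfrac{9}{88}$ term. Since $S$ is smooth, this curve is automatically smooth and contained in the smooth locus of $S$, so no extra hypothesis is needed, and the numerics above show the bound stays strictly above $1$ regardless of which case occurs. Thus there is no real obstacle here; the corollary is a purely numerical consequence of Lemma~\ref{lemma:Nemuro} and Proposition~\ref{proposition:dP5-smooth}.
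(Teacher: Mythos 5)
Your proposal is correct and is exactly the paper's (implicit) argument: the paper derives this corollary by feeding the bound of Proposition~\ref{proposition:dP5-smooth} into Lemma~\ref{lemma:Nemuro}, and your numerics ($\tfrac{15}{22f(1)}=\tfrac{13}{22}$, $\tfrac{3}{22}\int_1^2\tfrac{(2-u)(6-u)}{f(u)}\,du=\tfrac{21}{88}$, giving $\tfrac{88}{73}$ resp. $\tfrac{44}{41}$ in the two cases) check out. You also correctly handle both cases of the lemma (with and without the $\tfrac{9}{88}$ term), so nothing is missing.
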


Similarly, applying Lemma~\ref{lemma:Nemuro} together with Propositions~\ref{proposition:dP5-A1} and \ref{proposition:dP5-A2}, we get

\begin{corollary}
\label{corollary:dP5-singular}
Let $S$ be the surface in $|H-R|$ that contains $P$. Suppose that $S$ has one singular~point.
Let $C$ be the fiber of the conic bundle $\phi\vert_S\colon S\to\mathbb{P}^1$ that contains the singular point of the surface $S$.
Suppose that $P\not\in C$. Then $\delta_P\big(S,W^S_{\bullet,\bullet}\big)>1$.
\end{corollary}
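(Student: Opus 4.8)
The plan is to deduce Corollary~\ref{corollary:dP5-singular} from Lemma~\ref{lemma:Nemuro} together with Propositions~\ref{proposition:dP5-A1} and~\ref{proposition:dP5-A2}, mimicking the passage from Proposition~\ref{proposition:dP5-smooth} to Corollary~\ref{corollary:dP5-smooth}. Since $S\in|H-R|$ is singular, it is a singular fiber of $\sigma$, so the hypothesis of Theorem~\ref{theorem:main} forces its unique singular point to be of type $\mathbb{A}_1$ or $\mathbb{A}_2$; accordingly we invoke Proposition~\ref{proposition:dP5-A1} or Proposition~\ref{proposition:dP5-A2}. The assumption $P\notin C$ is exactly their hypothesis, and each of them, split further by whether $P\in E$, provides a function $f$ on $[1,2)$ with $\delta_P(S,D)\geqslant f(u)$. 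Since $f$ is piecewise given by ratios of polynomials that agree at the (quadratic irrational) break points and has limit $2$ as $u\to 2$ in every case, it extends to a continuous positive function on $[1,2]=[1,\tau]$, as Lemma~\ref{lemma:Nemuro} requires.

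I would then plug this $f$ into Lemma~\ref{lemma:Nemuro}. Because $\pi(E)=C_4$ and $\pi(R)=L$ are disjoint, $E\cap R=\varnothing$, so $P\in E$ forces $P\notin R$; hence for $S\in|H-R|$ only the branches of Lemma~\ref{lemma:Nemuro} with $S\in|H-R|$ occur, the extra summand $\tfrac{9}{88}$ being present precisely when $P\in R$. This leaves a short list of cases indexed by the singularity type and by the position of $P$ (in $E$, in $R$, or in neither), and in each of them Lemma~\ref{lemma:Nemuro} reduces the desired bound $\delta_P(S,W^S_{\bullet,\bullet})>1$ to the elementary numerical inequality
\[
\frac{15}{22f(1)}+\frac{3}{22}\int_1^2\frac{(2-u)(6-u)}{f(u)}\,du\;\big(\,+\tfrac{9}{88}\text{ when }P\in R\,\big)\;<\;1.
\]

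Finally I would verify this inequality in each case. On every piece of the domain of $f$ the factor $6-u$ cancels, so the integrand becomes a cubic polynomial in $u$ and the integral is computed by evaluating an explicit antiderivative at the endpoints. For instance, for an $\mathbb{A}_1$ point with $P\notin C\cup E\cup R$ one has $f(1)=\tfrac{15}{13}$, the integral equals $\tfrac{21}{4}$, and the left-hand side is $\tfrac{13}{22}+\tfrac{21}{88}=\tfrac{73}{88}<1$, whence $\delta_P(S,W^S_{\bullet,\bullet})\geqslant\tfrac{88}{73}>1$. The remaining cases are handled the same way; the $\mathbb{A}_2$ cases are the tightest, but the left-hand side stays below $1$ even after adding $\tfrac{9}{88}$ in the sub-case $P\in R$. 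The only labour is the piece-by-piece bookkeeping and the evaluation of these polynomial integrals at the irrational break points $\tfrac{7-\sqrt{21}}{2}$, $\tfrac{1+\sqrt{21}}{5}$, $\sqrt5-1$, $\tfrac{7-\sqrt{17}}{2}$ coming from Propositions~\ref{proposition:dP5-A1}--\ref{proposition:dP5-A2}; I expect that to be the main, though entirely routine, obstacle.
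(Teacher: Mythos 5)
Your proposal is correct and matches the paper's own route: the paper derives this corollary exactly by feeding the bounds of Propositions~\ref{proposition:dP5-A1} and~\ref{proposition:dP5-A2} (split by whether $P\in E$, with the extra $\tfrac{9}{88}$ only when $P\in R$, which is incompatible with $P\in E$ since $E\cap R=\varnothing$) into Lemma~\ref{lemma:Nemuro}. Your numerics check out (the tightest case, $\mathbb{A}_2$ with $P\in E$, gives roughly $0.94<1$), with only the harmless slip that the quantity you call ``the integral'' equal to $\tfrac{21}{4}$ is $\int_1^2(2-u)(u^2-10u+22)\,du$ rather than $\int_1^2\frac{(2-u)(6-u)}{f(u)}\,du=\tfrac74$, and the final value $\tfrac{73}{88}$ is right.
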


Finally, applying  Lemma~\ref{lemma:Nemuro} together with Proposition~\ref{proposition:dP6-smooth}, we get

\begin{corollary}
\label{corollary:dP6-smooth}
Let $S$ be the surface in  $|2H-E|$ that contains $P$.
Suppose that $S$~is~smooth. Then
$$
\delta_P\big(S,W^S_{\bullet,\bullet}\big)\geqslant 1.
$$
Moreover, if $P\not\in E$ or $P$ is not contained in a $(-1)$-curve in $S$, then $\delta_P\big(S,W^S_{\bullet,\bullet}\big)>1$.
\end{corollary}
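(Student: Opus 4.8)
The plan is to do exactly what the sentence preceding the corollary promises: feed Proposition~\ref{proposition:dP6-smooth} into Lemma~\ref{lemma:Nemuro}. Since $S\in|2H-E|$, only the third and fourth displayed bounds of Lemma~\ref{lemma:Nemuro} are available — the third for $P\notin E$ and the fourth for $P\in E$ (whose stated hypothesis ``$S\in|H-R|$'' should of course read $S\in|2H-E|$). In both of them the only data needed are a continuous positive function $f$ on $[1,\tfrac32]$ with $\delta_P(S,D)\geqslant f(u)$ on $[1,\tfrac32)$, together with the single number $f(1)$ and the single integral $\int_1^{3/2}\frac{2(3-2u)(5-2u)}{f(u)}\,du$. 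So the proof amounts to two choices of $f$ coming from the two cases of Proposition~\ref{proposition:dP6-smooth} and two elementary integral evaluations, followed by simplification of the resulting fractions.

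\emph{Case $P$ on a $(-1)$-curve of $S$.} Here Proposition~\ref{proposition:dP6-smooth} gives $\delta_P(S,D)\geqslant\frac1{2-u}$, so I take $f(u)=\frac1{2-u}$, which is continuous and positive on $[1,\tfrac32]$ with $f(1)=1$; a one-line polynomial integration gives $\int_1^{3/2}2(3-2u)(5-2u)(2-u)\,du=\tfrac98$. Substituting into the third bound of Lemma~\ref{lemma:Nemuro} gives $\delta_P(S,W^S_{\bullet,\bullet})\geqslant\frac1{\frac9{11}+\frac3{22}\cdot\frac98}=\frac{176}{171}>1$ when $P\notin E$, and substituting into the fourth gives $\delta_P(S,W^S_{\bullet,\bullet})\geqslant\frac1{\frac5{176}+\frac9{11}+\frac3{22}\cdot\frac98}=1$ when $P\in E$ — the extra $\frac5{176}$ is exactly what makes the denominator equal to $1$, so this is the one case producing only $\geqslant 1$.

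\emph{Case $P$ not on a $(-1)$-curve.} Now I would take $f$ to be the piecewise function of Proposition~\ref{proposition:dP6-smooth}: $f(u)=\frac{2(5-2u)}{4u^2-18u+19}$ on $[1,u_0]$ and $f(u)=\frac{3(5-2u)}{4u^2-18u+21}$ on $[u_0,\tfrac32)$ with $u_0=\frac{9-\sqrt{21}}{4}$; continuity at $u_0$ holds because $4u_0^2-18u_0+15=0$, which is precisely the equation defining the crossover point in Proposition~\ref{proposition:dP6-smooth}, and $f(1)=\tfrac65$. On each piece the integrand $\frac{2(3-2u)(5-2u)}{f(u)}$ collapses to a polynomial, namely $(3-2u)(4u^2-18u+19)$ and $\frac23(3-2u)(4u^2-18u+21)$ respectively, so the integral is elementary; reducing the powers of $u_0$ by means of $4u_0^2-18u_0+15=0$ gives $\int_1^{3/2}\frac{2(3-2u)(5-2u)}{f(u)}\,du=\frac{47-7\sqrt{21}}{16}$. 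Since $\frac9{11f(1)}=\frac{15}{22}$, the third bound of Lemma~\ref{lemma:Nemuro} yields $\delta_P(S,W^S_{\bullet,\bullet})\geqslant\frac{352}{381-21\sqrt{21}}$ when $P\notin E$ and the fourth yields $\delta_P(S,W^S_{\bullet,\bullet})\geqslant\frac{352}{391-21\sqrt{21}}$ when $P\in E$, both $>1$ because $\sqrt{21}>2$.

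Assembling the four outcomes gives $\delta_P(S,W^S_{\bullet,\bullet})\geqslant 1$ in every case and $\delta_P(S,W^S_{\bullet,\bullet})>1$ unless simultaneously $P\in E$ and $P$ lies on a $(-1)$-curve of $S$, which is exactly the assertion. I expect no genuine obstacle here; the only point requiring a little care is the second integral evaluation, where one must handle the breakpoint $u_0=\frac{9-\sqrt{21}}{4}$ correctly and notice that it is exactly the value making $f$ continuous, so that Lemma~\ref{lemma:Nemuro} applies. All the geometric content sits in Lemma~\ref{lemma:Nemuro} and Proposition~\ref{proposition:dP6-smooth}, and this corollary is purely their arithmetic consequence.
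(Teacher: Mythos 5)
Your proposal is correct and follows exactly the paper's route: the paper proves this corollary precisely by feeding the bounds of Proposition~\ref{proposition:dP6-smooth} into the two $|2H-E|$ cases of Lemma~\ref{lemma:Nemuro}, and your explicit evaluations (the integral $\tfrac{9}{8}$ giving $\tfrac{176}{171}$ and exactly $1$ in the $(-1)$-curve cases, and $\tfrac{47-7\sqrt{21}}{16}$ giving $\tfrac{352}{381-21\sqrt{21}}$ and $\tfrac{352}{391-21\sqrt{21}}$ otherwise) all check out, including the correct reading of the typos in the hypotheses of Lemma~\ref{lemma:Nemuro}. No gaps.
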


Now, we are ready to show that $\beta(\mathbf{F})>0$.
Suppose that $\beta(\mathbf{F})\leqslant 0$. Let us seek for a contradiction.
We may assume that $P$ is a general point in $Z$.
Let $S$ be the surface in  $|H-R|$ that contains~$P$.
If~$S$~is smooth, $\delta_P(S,W^S_{\bullet,\bullet})>1$ by Corollary~\ref{corollary:dP5-smooth}, which contradicts \eqref{equation:AZ}.
Therefore, the surface $S$~is~singular. This implies that $Z\subset S$.

Recall that $S$ has one singular point, and this singular point is either a singular point of type $\mathbb{A}_1$
or a singular point of type $\mathbb{A}_2$. Set $O=\mathrm{Sing}(S)$,
let $S^\prime$ be the surface in  $|2H-E|$ that contains~$O$, and set $C=S\cap S^\prime$.
Then $C$ is a fiber of the conic bundle $\phi\vert_S\colon S\to\mathbb{P}^1$.
Moreover, we have
$$
\boxed{Z\subset C}
$$
since otherwise $\delta_P(S,W^S_{\bullet,\bullet})>1$ by Corollary~\ref{corollary:dP5-singular}, which is impossible by \eqref{equation:AZ}.

We claim that $S^\prime$ is smooth. Indeed, it follows from \cite{CorayTsfasman} that $C=\ell_1+\ell_2$,
where $\ell_1$ and $\ell_2$ are~smooth rational curves that intersects transversally at $O$.
Note that
\begin{itemize}
\item $\pi(S)$ is a plane such that $L\subset\pi(S)$,
\item $\pi(S^\prime)$ is a quadric surface such that $C_4\subset\pi(S^\prime)$.
\item $\pi(\ell_1)$ and $\pi(\ell_2)$ are lines such that $\pi(O)=\pi(\ell_1)\cap\pi(\ell_2)$.
\end{itemize}
Since the surface $S$ is singular at $O$, the plane $\pi(S)$ is tangent to $C_4$ at the point $\pi(O)$.
In particular, we see that $\pi(O)\in C_4$, which implies that $S^\prime$ is smooth at $\pi(O)$,
because $C_4$ is smooth. Since
$$
\pi(S)\cap\pi(S^\prime)=\pi(\ell_1)\cup\pi(\ell_2),
$$
we conclude that the quadric surface $\pi(S^\prime)$ is smooth. Moreover, since
$$
L\cap \pi(S^\prime)=\big(L\cap\pi(\ell_1)\big)\cup\big(L\cap\pi(\ell_2)\big),
$$
we see that the line $L$ intersects the quadric $\pi(S^\prime)$ transversally by the points $L\cap\pi(\ell_1)$ and $L\cap\pi(\ell_2)$,
which implies that the surface $S^\prime$ is smooth.

We see that $S^\prime$ is a smooth sextic del Pezzo surface.
Observe that $E\cap S^\prime$ is a smooth elliptic~curve.
Moreover, applying Corollary~\ref{corollary:dP6-smooth} and \eqref{equation:AZ} to the surface $S^\prime$,
we see that $P$ is one of finitely many intersection points of this elliptic curve with six $(-1)$-curves in $S^\prime$.
This shows that $Z$ is a point, so~that $Z=P$. 
Since $\delta_P(S^\prime,W^{S^\prime}_{\bullet,\bullet})\geqslant 1$ by Corollary~\ref{corollary:dP6-smooth},
it follows from \eqref{equation:AZ} that
$$
1\geqslant\frac{A_X(\mathbf{F})}{S_X(\mathbf{F})}\geqslant\min\Bigg\{\frac{1}{S_X(S^\prime)},\delta_P\big(S,W^{S^\prime}_{\bullet,\bullet}\big)\Bigg\}=
\min\Bigg\{\frac{176}{109},\delta_P\big(S^\prime,W^{S^\prime}_{\bullet,\bullet}\big)\Bigg\}=\min\Bigg\{\frac{176}{109},1\Bigg\}=1.
$$
Now, using Remark~\ref{remark}, we obtain a contradiction, because $Z$ is a point in $S^\prime$.
Theorem~\ref{theorem:main} is proved.

\appendix

\section{$\delta$-invariants of some polarized del Pezzo surfaces}
\label{section:delta}

Let $S$ be a del Pezzo surface with at most Du Val singularities, let $D$ be an ample divisor on $S$.
For every prime divisor $F$ over $S$, set
$$
S_D(F)=\frac{1}{D^2}\int\limits_0^{\infty}\mathrm{vol}\big(D-vF\big)dv.
$$
Let $P$ be a smooth point in $S$, and let
$$
\delta_P\big(S,D\big)=\inf_{\substack{F/S\\P\in C_S(F)}}\frac{A_S(F)}{S_D\big(F\big)},
$$
where the~infimum is taken by all prime divisors over $S$ whose center on $S$ contains $P$. Set
$$
\delta\big(S,D\big)=\inf_{P\in S}\delta_P\big(S,D\big).
$$
In~this appendix, we estimate $\delta_P(S,D)$ in some cases similar to what is done in \cite{Akaike,Denisova} for $D=-K_S$.

To explain how to estimate  $\delta_P(S,D)$, fix a smooth curve $C\subset S$ that passes through $P$. Set
$$
\tau=\mathrm{sup}\Big\{u\in\mathbb{R}_{\geqslant 0}\ \big\vert\ \text{the divisor  $D-vC$ is pseudo-effective}\Big\}.
$$
For~$v\in[0,\tau]$, let $P(v)$ be the~positive part of the~Zariski decomposition of the~divisor $D-vC$,
and let $N(v)$ be its negative part. Then
$$
S_D(C)=\frac{1}{D^2}\int\limits_{0}^{\infty}\mathrm{vol}\big(D-vC\big)dv=\frac{1}{D^2}\int\limits_{0}^{\tau}P(v)^2dv.
$$
Note that $\delta_P(S,D)\leqslant\frac{1}{S_D(C)}$, since $A_S(C)=1$.
To estimate $\delta_P(S,D)$ from below, set
$$
S\big(W^C_{\bullet,\bullet};P\big)=\frac{2}{D^2}\int\limits_0^{\tau}\mathrm{ord}_P\big(N(v)\vert_{C}\big)\big(P(v)\cdot C\big)dv+\frac{1}{D^2}\int\limits_0^\tau\big(P(v)\cdot C\big)^2dv.
$$
Then it follows from \cite{AbbanZhuang,Book} that
\begin{equation}
\label{equation:AZ-surface}\tag{$\heartsuit$}
\delta_P\big(S,D\big)\geqslant\min\Bigg\{\frac{1}{S_D(C)},\frac{1}{S\big(W^C_{\bullet,\bullet};P\big)}\Bigg\}.
\end{equation}
Usually, \eqref{equation:AZ-surface} gives a very good estimate for $\delta_P(S,D)$ when $C^2<0$.
If $P$ is not contained in any curve with negative self-intersection, we have to blow up the surface $S$ at the point $P$,
and apply similar arguments to the exceptional curve of this blow up.

Namely, let $f\colon\widetilde{S}\to S$ be the blow up of $S$ at the point $P$, and let $E$ be the $f$-exceptional curve.
In all applications, the surface $\widetilde{S}$ will be a del Pezzo surface with at most Du Val singularities.~Set
$$
\widetilde{\tau}=\mathrm{sup}\Big\{u\in\mathbb{R}_{\geqslant 0}\ \big\vert\ \text{the divisor  $f^*(D)-vE$ is pseudo-effective}\Big\}.
$$
For~$v\in[0,\widetilde{\tau}]$, let $\widetilde{P}(v)$ be the~positive part of the~Zariski decomposition of the~divisor $f^*(D)-vE$,
and let $\widetilde{N}(v)$ be its negative part. Then
$$
S_D(E)=\frac{1}{D^2}\int\limits_{0}^{\infty}\mathrm{vol}\big(f^*(D)-vC\big)dv=\frac{1}{D^2}\int\limits_{0}^{\widetilde{\tau}}\widetilde{P}(v)^2dv.
$$
Note that $\delta_P(S,D)\leqslant\frac{2}{S_D(E)}$, since $A_S(E)=2$.
Now, for every point $O\in E$, we set
$$
S\big(W^E_{\bullet,\bullet};O\big)=\frac{2}{D^2}\int\limits_0^{\widetilde{\tau}}\mathrm{ord}_O\big(\widetilde{N}(v)\vert_{E}\big)\big(\widetilde{P}(v)\vert_{E}\big)dv+\frac{1}{D^2}\int\limits_0^{\widetilde{\tau}}\big(\widetilde{P}(v)\cdot E\big)^2dv.
$$
Then it follows from \cite{AbbanZhuang,Book} that
\begin{equation}
\label{equation:AZ-surface-blow-up}\tag{$\diamondsuit$}
\delta_P\big(S,D\big)\geqslant\min\Bigg\{\frac{2}{S_D(E)},\inf_{O\in E}\frac{1}{S\big(W^E_{\bullet,\bullet};O\big)}\Bigg\}.
\end{equation}
In the next four subsections, we will apply \eqref{equation:AZ-surface} and \eqref{equation:AZ-surface-blow-up} using notations introduced here.

\subsection{Smooth quintic del Pezzo surface}
\label{subsection:dP5-smooth}

Let $S$ be a smooth del Pezzo surface such that $K_S^2=5$.
There is a birational morphism $\pi\colon S\to\mathbb{P}^2$ that blows up $4$ points.
Let $\mathbf{e}_1$, $\mathbf{e}_2$, $\mathbf{e}_3$, $\mathbf{e}_4$ be the~exceptional curves of the morphism $\pi$,
and let $\mathbf{h}=\pi^*(\mathcal{O}_{\mathbb{P}^2}(1))$. Set
$$
D=a\mathbf{h}-\mathbf{e}_1-\mathbf{e}_2-\mathbf{e}_3-\mathbf{e}_4
$$
for $a\in(2,3]$. Then $D$ is ample and $D^2=a^2-4$.

\begin{lemma}
\label{lemma:dP5-e1-e2-e3-e4}
Let $P$ be a point in $\mathbf{e}_1\cup\mathbf{e}_2\cup\mathbf{e}_3\cup\mathbf{e}_4$.
Then
$$
\delta_P\big(S,D\big)\geqslant\frac{3(a + 2)}{a^2+2a-2}.
$$
\end{lemma}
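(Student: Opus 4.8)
The plan is to apply the inequality \eqref{equation:AZ-surface} with $C=\mathbf{e}_1$. By the symmetry of the configuration under permutations of the four exceptional curves (which fixes $D$ and $\mathbf{h}$), we may assume $P\in\mathbf{e}_1$. Since $A_S(\mathbf{e}_1)=1$, by \eqref{equation:AZ-surface} it suffices to prove that
$$
S_D(\mathbf{e}_1)\leqslant\frac{a^2+2a-2}{3(a+2)}\qquad\text{and}\qquad S\big(W^{\mathbf{e}_1}_{\bullet,\bullet};P\big)\leqslant\frac{a^2+2a-2}{3(a+2)}\quad\text{for every }P\in\mathbf{e}_1.
$$

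The first step is the Zariski decomposition of $D-v\mathbf{e}_1$. The ten $(-1)$-curves on $S$ are $\mathbf{e}_1,\dots,\mathbf{e}_4$ together with $\mathbf{h}-\mathbf{e}_i-\mathbf{e}_j$ for $1\leqslant i<j\leqslant 4$; intersecting $D-v\mathbf{e}_1$ with each of them shows that for $v\in[0,a-2]$ the divisor $D-v\mathbf{e}_1$ is nef, so $P(v)=D-v\mathbf{e}_1$, $N(v)=0$ and $P(v)^2=a^2-4-2v-v^2$. For $v\in[a-2,2a-4]$ the negative part is $N(v)=(v-a+2)(L_2+L_3+L_4)$ with $L_j=\mathbf{h}-\mathbf{e}_1-\mathbf{e}_j$; solving $P(v)\cdot L_j=0$ gives
$$
P(v)=(4a-3v-6)\mathbf{h}-(3a-2v-5)\mathbf{e}_1-(a-v-1)(\mathbf{e}_2+\mathbf{e}_3+\mathbf{e}_4),
$$
so that $P(v)^2=2(2a-4-v)(a-1-v)$ and $P(v)\cdot\mathbf{e}_1=3a-2v-5$. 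At $v=2a-4$ one gets $P(2a-4)=(3-a)\big(2\mathbf{h}-\mathbf{e}_1-\mathbf{e}_2-\mathbf{e}_3-\mathbf{e}_4\big)$, a nef class of self-intersection $0$; since $2\mathbf{h}-\mathbf{e}_1-\mathbf{e}_2-\mathbf{e}_3-\mathbf{e}_4$ is the class of the conic pencil through the four points and meets every pseudo-effective class non-negatively, it follows that $\tau=2a-4$.

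The second step is integration. Using the two intervals above,
$$
S_D(\mathbf{e}_1)=\frac{1}{a^2-4}\left(\int_0^{a-2}(a^2-4-2v-v^2)\,dv+\int_{a-2}^{2a-4}2(2a-4-v)(a-1-v)\,dv\right)=\frac{(a-2)(a+10)}{3(a+2)},
$$
and the bound $S_D(\mathbf{e}_1)\leqslant\frac{a^2+2a-2}{3(a+2)}$ is equivalent to $(a-2)(a+10)\leqslant a^2+2a-2$, i.e. to $a\leqslant 3$, which holds. For the refined invariant, note that $\mathrm{Supp}(N(v))\cap\mathbf{e}_1$ consists of the three distinct points $\mathbf{e}_1\cap L_j$, each entering with multiplicity $v-a+2$; hence $\mathrm{ord}_P\big(N(v)\vert_{\mathbf{e}_1}\big)\leqslant v-a+2$, with equality exactly when $P$ is one of these three points. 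Therefore
\begin{multline*}
S\big(W^{\mathbf{e}_1}_{\bullet,\bullet};P\big)\leqslant\frac{2}{a^2-4}\int_{a-2}^{2a-4}(v-a+2)(3a-2v-5)\,dv\\
+\frac{1}{a^2-4}\left(\int_0^{a-2}(1+v)^2\,dv+\int_{a-2}^{2a-4}(3a-2v-5)^2\,dv\right)=\frac{a^2+2a-2}{3(a+2)}.
\end{multline*}
Combining the two bounds with \eqref{equation:AZ-surface} yields $\delta_P(S,D)\geqslant\frac{3(a+2)}{a^2+2a-2}$.

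The main obstacle is pinning down the Zariski decomposition correctly — in particular noticing that the pseudo-effective threshold is only $\tau=2a-4$ (and not, say, $a-1$), which happens because $\mathbf{e}_1$ is a section of the conic pencil $|2\mathbf{h}-\mathbf{e}_1-\mathbf{e}_2-\mathbf{e}_3-\mathbf{e}_4|$, together with identifying the three curves $L_2,L_3,L_4$ (which occur with equal multiplicity, by symmetry) that form the negative part on the second interval. Once the decomposition is known, the remaining two integral computations are elementary, and the key numerical coincidence is that $S(W^{\mathbf{e}_1}_{\bullet,\bullet};P)$ in the worst case equals exactly $\frac{a^2+2a-2}{3(a+2)}$.
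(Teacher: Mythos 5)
Your proposal is correct and follows essentially the same route as the paper: the same flag curve $C=\mathbf{e}_1$, the same Zariski decomposition on the two intervals $[0,a-2]$ and $[a-2,2a-4]$ with negative part supported on the three curves in $|\mathbf{h}-\mathbf{e}_1-\mathbf{e}_j|$, the same values $S_D(\mathbf{e}_1)=\frac{(a-2)(a+10)}{3(a+2)}$ and worst-case $S(W^{\mathbf{e}_1}_{\bullet,\bullet};P)=\frac{a^2+2a-2}{3(a+2)}$, and the conclusion via \eqref{equation:AZ-surface}. The only cosmetic differences are that you bound $\mathrm{ord}_P(N(v)\vert_{\mathbf{e}_1})$ uniformly instead of splitting into cases and that you make explicit the check $S_D(\mathbf{e}_1)\leqslant\frac{a^2+2a-2}{3(a+2)}$ for $a\leqslant 3$, which the paper leaves implicit.
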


\begin{proof}
We may assume that $P\in\mathbf{e}_1$. Set $C=\mathbf{e}_1$. Then $\tau=2a-4$. Moreover, we have
$$
P(v)\sim_{\mathbb{R}} \left\{\aligned
&a\mathbf{h}-(1+v)\mathbf{e}_1-\mathbf{e}_2-\mathbf{e}_3-\mathbf{e}_4  \ \text{ if } 0\leqslant v\leqslant a-2, \\
&(4a-3v-6)\mathbf{h}+(2v+5-3a)\mathbf{e}_1+(1-a+v)\big(\mathbf{e}_2+\mathbf{e}_3+\mathbf{e}_4\big)\ \text{ if } a-2\leqslant v\leqslant 2a-4,
\endaligned
\right.
$$
and
$$
N(v)= \left\{\aligned
&0\ \text{ if } 0\leqslant v\leqslant a-2, \\
&(v+2-a)\big(\mathbf{l}_{12}+\mathbf{l}_{13}+\mathbf{l}_{14}\big)\ \text{ if } a-2\leqslant v\leqslant 2a-4,
\endaligned
\right.
$$
where $\mathbf{l}_{12}$, $\mathbf{l}_{13}$, $\mathbf{l}_{14}$ are $(-1)$-curves in $|\mathbf{h}-\mathbf{e}_1-\mathbf{e}_2|$,
$|\mathbf{h}-\mathbf{e}_1-\mathbf{e}_3|$, $|\mathbf{h}-\mathbf{e}_1-\mathbf{e}_4|$, respectively. Then
$$
P(v)^2=\left\{\aligned
&a^2-v^2-2v-4  \ \text{ if } 0\leqslant v\leqslant a-2, \\
&2(2a-v-4)(a-v-1)\ \text{ if } a-2\leqslant v\leqslant 2a-4,
\endaligned
\right.
$$
and
$$
P(v)\cdot C=\left\{\aligned
&1+v\ \text{ if } 0\leqslant v\leqslant a-2, \\
&3a-2v-5\ \text{ if } a-2\leqslant v\leqslant 2a-4.
\endaligned
\right.
$$
Integrating, we get $S_D(C)=\frac{(a-2)(a+10)}{3(a+2)}$ and
$$
S\big(W^C_{\bullet,\bullet};P\big)=\frac{2}{a^2-4}\int\limits_{a-2}^{2a-4}\mathrm{ord}_P\big(N(v)\vert_{C}\big)\big(P(v)\cdot C\big)dv+\frac{2a^2-5a+8}{3(a+2)}.
$$
Thus, if $P\not\in\mathbf{l}_{12}\cup\mathbf{l}_{13}\cup\mathbf{l}_{14}$, then $S(W^C_{\bullet,\bullet};P)=\frac{2a^2-5a+8}{3(a+2)}$.
Similarly, if $P\in\mathbf{l}_{12}\cup\mathbf{l}_{13}\cup\mathbf{l}_{14}$, then
$$
S\big(W^C_{\bullet,\bullet};P\big)=\frac{2}{a^2-4}\int\limits_{a-2}^{2a-4}(v+2-a)\big(P(v)\cdot C\big)dv+\frac{2a^2-5a+8}{3a+6}=\frac{a^2+2a-2}{3(a+2)}.
$$
Now, using \eqref{equation:AZ-surface}, we obtain the required assertion.
\end{proof}

As in the proof of Lemma~\ref{lemma:dP5-e1-e2-e3-e4}, let $\mathbf{l}_{12}$, $\mathbf{l}_{13}$, $\mathbf{l}_{14}$
$\mathbf{l}_{23}$, $\mathbf{l}_{24}$, $\mathbf{l}_{34}$ be $(-1)$-curves in $|\mathbf{h}-\mathbf{e}_1-\mathbf{e}_2|$,
$|\mathbf{h}-\mathbf{e}_1-\mathbf{e}_3|$, $|\mathbf{h}-\mathbf{e}_1-\mathbf{e}_4|$,
$|\mathbf{h}-\mathbf{e}_2-\mathbf{e}_3|$, $|\mathbf{h}-\mathbf{e}_2-\mathbf{e}_4|$,
$|\mathbf{h}-\mathbf{e}_3-\mathbf{e}_4|$, respectively.

\begin{lemma}
\label{lemma:dP5-lines}
Let $P$ be a point in $\mathbf{l}_{12}\cup\mathbf{l}_{13}\cup\mathbf{l}_{14}\cup\mathbf{l}_{23}\cup\mathbf{l}_{24}\cup\mathbf{l}_{34}$.
Then
$$
\delta_P\big(S,D\big)\geqslant\frac{3(a + 2)}{a^2+2a-2}.
$$
\end{lemma}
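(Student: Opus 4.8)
The plan is to mimic the proof of Lemma~\ref{lemma:dP5-e1-e2-e3-e4} almost verbatim, exploiting the symmetry of the smooth quintic del Pezzo surface under the Weyl group action. By that symmetry we may assume $P\in\mathbf{l}_{12}$, and we take $C=\mathbf{l}_{12}$, a $(-1)$-curve in $|\mathbf{h}-\mathbf{e}_1-\mathbf{e}_2|$. First I would compute $\tau=\sup\{v\mid D-vC\text{ is pseudo-effective}\}$; since $D\cdot C = a-2$ and $C^2=-1$, and since contracting $C$ (together with the curves it eventually drags in) governs the Zariski chamber structure, I expect $\tau=2a-4$ again, with a single break point at $v=a-2$ where the other $(-1)$-curves meeting $\mathbf{l}_{12}$ enter the negative part.

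The key computational steps, in order: (i) determine the Zariski decomposition $D-vC=P(v)+N(v)$ on both subintervals $[0,a-2]$ and $[a-2,2a-4]$, writing $P(v)$ and $N(v)$ explicitly in the basis $\mathbf{h},\mathbf{e}_1,\dots,\mathbf{e}_4$ — on the first interval $N(v)=0$, and on the second $N(v)$ should be a positive multiple of a sum of three $(-1)$-curves meeting $\mathbf{l}_{12}$ (the analogues of $\mathbf{l}_{12},\mathbf{l}_{13},\mathbf{l}_{14}$ in the earlier lemma, now e.g. $\mathbf{e}_3,\mathbf{e}_4$ and $\mathbf{l}_{34}$, which are exactly the four $(-1)$-curves disjoint from $\mathbf{l}_{12}$ — one must identify which three of the ten $(-1)$-curves intersect $\mathbf{l}_{12}$); (ii) extract $P(v)^2$ and $P(v)\cdot C$ on each piece; (iii) integrate to get $S_D(C)=\frac{1}{D^2}\int_0^\tau P(v)^2\,dv$, which by symmetry with the earlier computation should again equal $\frac{(a-2)(a+10)}{3(a+2)}$; (iv) compute $S(W^C_{\bullet,\bullet};P)$, splitting into the case $P$ lies on none of the three curves appearing in $N(v)\vert_C$ (giving the ``generic'' value $\frac{2a^2-5a+8}{3(a+2)}$) and the case $P$ lies on one of them (giving $\frac{a^2+2a-2}{3(a+2)}$, the larger of the two for $a\in(2,3]$); (v) apply \eqref{equation:AZ-surface} to conclude $\delta_P(S,D)\geqslant\min\{1/S_D(C),1/S(W^C_{\bullet,\bullet};P)\}$, and check this minimum equals $\frac{3(a+2)}{a^2+2a-2}$ for all $a\in(2,3]$.

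The main obstacle I anticipate is purely bookkeeping rather than conceptual: correctly identifying the three $(-1)$-curves that appear in $N(v)$ and verifying that the configuration of $(-1)$-curves through $P$ on $C=\mathbf{l}_{12}$ has the same combinatorics as in the $C=\mathbf{e}_1$ case — i.e. that there really is a point on $\mathbf{l}_{12}$ lying on one of the boundary curves and that no point lies on two of them simultaneously, so that the worst case for $S(W^C_{\bullet,\bullet};P)$ is as claimed. One clean way to avoid re-deriving everything is to invoke the fact that on a smooth quintic del Pezzo surface any $(-1)$-curve can be contracted and, after a change of the birational model $\pi\colon S\to\mathbb{P}^2$, becomes one of the exceptional curves $\mathbf{e}_i$ while $D=a\mathbf{h}-\sum\mathbf{e}_i$ is preserved (since $-K_S$ and hence the class $D$ is Weyl-invariant); then Lemma~\ref{lemma:dP5-e1-e2-e3-e4} applies directly to $C$ in the new model and yields exactly the stated bound. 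I would present the proof in this second, shorter way, remarking that the computation is identical to that of Lemma~\ref{lemma:dP5-e1-e2-e3-e4} after this change of model.
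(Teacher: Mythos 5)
Your proposal rests on a symmetry that does not exist. The divisor $D=a\mathbf{h}-\mathbf{e}_1-\mathbf{e}_2-\mathbf{e}_3-\mathbf{e}_4$ is Weyl-invariant only for $a=3$, when $D=-K_S$; for $a\in(2,3)$ it is not proportional to $-K_S$, and indeed $D\cdot\mathbf{e}_i=1$ while $D\cdot\mathbf{l}_{ij}=a-2$, so no change of the birational model $S\to\mathbb{P}^2$ can turn $\mathbf{l}_{12}$ into one of the exceptional curves while preserving the class $D$. Hence the short route you say you would actually present --- reduce to Lemma~\ref{lemma:dP5-e1-e2-e3-e4} after a change of model --- is invalid. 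The same asymmetry falsifies the quantitative guesses in your fallback direct computation: for $C=\mathbf{l}_{12}$ the pseudo-effective threshold is $\tau=a-1$, not $2a-4$ (for $v>a-1$ one has $(D-vC)\cdot(\mathbf{h}-\mathbf{e}_3)=a-v-1<0$ with $\mathbf{h}-\mathbf{e}_3$ nef); there are two walls, at $v=a-2$ and $v=1$, not one; and the negative part is supported on the $(-1)$-curves that meet $C$, namely first $\mathbf{l}_{34}$ and then also $\mathbf{e}_1+\mathbf{e}_2$, not on curves disjoint from $\mathbf{l}_{12}$ as you suggest. Consequently $S_D(\mathbf{l}_{12})=\frac{a^2+2a-2}{3(a+2)}$, strictly larger than $S_D(\mathbf{e}_1)=\frac{(a-2)(a+10)}{3(a+2)}$ for $a<3$: on the lines the stated bound is exactly $1/S_D(C)$, i.e.\ it is forced by $S_D(C)$ itself, whereas in Lemma~\ref{lemma:dP5-e1-e2-e3-e4} it came from the worst case of $S(W^C_{\bullet,\bullet};P)$.

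That your (incorrect) bookkeeping would still have output the number $\frac{3(a+2)}{a^2+2a-2}$ is a coincidence produced by the minimum in \eqref{equation:AZ-surface}, not a proof. What is actually needed --- and what the paper does --- is to run \eqref{equation:AZ-surface} with $C=\mathbf{l}_{12}$ directly, after first reducing to $P\notin\mathbf{e}_1\cup\mathbf{e}_2\cup\mathbf{e}_3\cup\mathbf{e}_4$ via Lemma~\ref{lemma:dP5-e1-e2-e3-e4} (this exclusion matters, since $\mathbf{e}_1,\mathbf{e}_2$ enter $N(v)$ on the last interval): with the three-piece Zariski decomposition above one gets $S_D(C)=\frac{a^2+2a-2}{3(a+2)}$, and $S(W^C_{\bullet,\bullet};P)$ equal to $\frac{(a-2)(14-a)}{3(a+2)}$ generically and to $\frac{a^2+2a-2}{3(a+2)}$ when $P\in\mathbf{l}_{34}$, both at most $S_D(C)$, which yields the claim.
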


\begin{proof}
We may assume that $P\in\mathbf{l}_{12}$.
By Lemma~\ref{lemma:dP5-e1-e2-e3-e4}, we may also assume that $P\not\in\mathbf{e}_1\cup\mathbf{e}_2\cup\mathbf{e}_3\cup\mathbf{e}_4$.
Set $C=\mathbf{l}_{12}$. Then $\tau=a-1$. Moreover, we have
$$
P(v)\sim_{\mathbb{R}} \left\{\aligned
&(a-v)\mathbf{h}-(1-v)\big(\mathbf{e}_1+\mathbf{e}_2\big)-\mathbf{e}_3-\mathbf{e}_4  \ \text{ if } 0\leqslant v\leqslant a-2, \\
&(2a-2v-2)\mathbf{h}+(v-1)\big(\mathbf{e}_1+\mathbf{e}_2\big)+(1-a+v)\big(\mathbf{e}_3+\mathbf{e}_4\big)\ \text{ if } a-2\leqslant v\leqslant 1, \\
&(2a-2v-2)\mathbf{h}+(1-a+v)\big(\mathbf{e}_3+\mathbf{e}_4\big)\ \text{ if } 1\leqslant v\leqslant a-1,
\endaligned
\right.
$$
and
$$
N(v)=\left\{\aligned
&0  \ \text{ if } 0\leqslant v\leqslant a-2, \\
&(v+2-a)\mathbf{l}_{34} \ \text{ if } a-2\leqslant v\leqslant 1, \\
&(v+2-a)\mathbf{l}_{34}+(v-1)\big(\mathbf{e}_1+\mathbf{e}_2\big)\ \text{ if } 1\leqslant v\leqslant a-1.
\endaligned
\right.
$$
This gives
$$
P(v)^2=\left\{\aligned
&a^2-2av-v^2+4v-4  \ \text{ if } 0\leqslant v\leqslant a-2, \\
&2(a-2)(a-2v) \ \text{ if } a-2\leqslant v\leqslant 1, \\
&2(a-v-1)^2\ \text{ if } 1\leqslant v\leqslant a-1,
\endaligned,
\right.
$$
and
$$
P(v)\cdot C=\left\{\aligned
&a+v-2  \ \text{ if } 0\leqslant v\leqslant a-2, \\
&2a-4 \ \text{ if } a-2\leqslant v\leqslant 1, \\
&2a-2v-2\ \text{ if } 1\leqslant v\leqslant a-1.
\endaligned,
\right.
$$
Now, integrating, we get $S_D(C)=\frac{a^2+2a-2}{3(a+2)}$, which gives $\delta_P(S,D)\leqslant\frac{3(a+2)}{a^2+2a-2}$.
Similarly, we get
$$
S\big(W^C_{\bullet,\bullet};P\big)=
\frac{2}{a^2-4}\int\limits_{a-2}^{a-1}\mathrm{ord}_P\big(N(v)\vert_{C}\big)\big(P(v)\cdot C\big)dv+\frac{(a-2)(14-a)}{3(a+2)}.
$$
Thus, if $P\not\in\mathbf{l}_{34}$, then $S(W^C_{\bullet,\bullet};P)=\frac{(a-2)(14-a)}{3(a+2)}$.
Similarly, if $P\in\mathbf{l}_{34}$, then
$$
S\big(W^C_{\bullet,\bullet};P\big)=\frac{2}{a^2-4}\int\limits_{a-2}^{a-1}(v+2-a)\big(P(v)\cdot C\big)dv+\frac{(a-2)(14-a)}{3(a+2)}=\frac{a^2+2a-2}{3(a+2)}=S_D(C),
$$
so that $\delta_P(S,D)\geqslant\frac{3(a + 2)}{a^2+2a-2}$ by \eqref{equation:AZ-surface}.
\end{proof}

Finally, we prove

\begin{lemma}
\label{lemma:dP5-general-point}
Let $P$ be a point in $S\setminus(\mathbf{e}_1\cup\mathbf{e}_2\cup\mathbf{e}_3\cup\mathbf{e}_4\cup\mathbf{l}_{12}\cup\mathbf{l}_{13}\cup\mathbf{l}_{14}\cup\mathbf{l}_{23}\cup\mathbf{l}_{24}\cup\mathbf{l}_{34})$.
Then
$$
\delta_P\big(S,D\big)\geqslant
\left\{\aligned
&\frac{2(a+2)}{a^2-2a+4} \ \text{ if } 2<a\leqslant 5-\sqrt{5}, \\
&\frac{2(2a+4)}{a^2+6a-12}\ \text{ if }  5-\sqrt{5}\leqslant a\leqslant 3.
\endaligned
\right.
$$
\end{lemma}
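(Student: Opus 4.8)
Since $P$ lies on none of the ten $(-1)$-curves of $S$, the point $\pi(P)$ is distinct from the four points blown up by $\pi$ and is not collinear with two of them; hence the blow up $f\colon\widetilde{S}\to S$ of $S$ at $P$ is again a smooth del Pezzo surface, now of degree $4$, and the plan is to bound $\delta_P(S,D)$ from below via \eqref{equation:AZ-surface-blow-up} applied to $f$ and its exceptional curve $E$. Besides the proper transforms of the ten $(-1)$-curves of $S$, the surface $\widetilde{S}$ has six more $(-1)$-curves: the curve $E$ itself, the proper transforms $\mathbf{m}_1,\mathbf{m}_2,\mathbf{m}_3,\mathbf{m}_4$ of the lines joining $\pi(P)$ to the four blown up points, and the proper transform $\mathbf{c}$ of the conic through $\pi(P)$ and those four points. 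In the basis $f^*\mathbf{h},f^*\mathbf{e}_1,f^*\mathbf{e}_2,f^*\mathbf{e}_3,f^*\mathbf{e}_4,E$ of $\mathrm{Pic}(\widetilde{S})$ one has $\mathbf{m}_i\sim f^*\mathbf{h}-f^*\mathbf{e}_i-E$ and $\mathbf{c}\sim 2f^*\mathbf{h}-f^*\mathbf{e}_1-f^*\mathbf{e}_2-f^*\mathbf{e}_3-f^*\mathbf{e}_4-E$, so that $\mathbf{m}_i\cdot E=\mathbf{c}\cdot E=1$, while $\mathbf{m}_i\cdot\mathbf{m}_j=0$ and $\mathbf{m}_i\cdot\mathbf{c}=0$ for $i\ne j$.

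First I would work out the Zariski decomposition of $f^*(D)-vE$, which I expect to have three chambers with walls at $v=2a-4$, at $v=a-1$, and at $\widetilde{\tau}=\frac{3a-4}{2}$ (note $2a-4\leqslant a-1<\frac{3a-4}{2}$ for $a\in(2,3]$, with $2a-4=a-1$ only at $a=3$). For $v\in[0,2a-4]$ the divisor $f^*(D)-vE$ is nef — one checks it is nonnegative on all sixteen $(-1)$-curves, and $\mathbf{c}$ is the first to become orthogonal to it. For $v\in[2a-4,a-1]$ the negative part is $(v-2a+4)\mathbf{c}$; for $v\in[a-1,\widetilde{\tau}]$ it becomes $(v-2a+4)\mathbf{c}+(v-a+1)(\mathbf{m}_1+\mathbf{m}_2+\mathbf{m}_3+\mathbf{m}_4)$, and at $v=\widetilde{\tau}$ the positive part is numerically trivial, which pins down $\widetilde{\tau}$. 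On each chamber one verifies that the positive part $\widetilde{P}(v)$ is nef and records
$$
\widetilde{P}(v)\cdot E=\left\{\aligned
&v \ \text{ if } 0\leqslant v\leqslant 2a-4,\\
&2a-4 \ \text{ if } 2a-4\leqslant v\leqslant a-1,\\
&6a-8-4v \ \text{ if } a-1\leqslant v\leqslant\frac{3a-4}{2},
\endaligned\right.
$$
together with $\widetilde{P}(v)^2$ equal to $a^2-4-v^2$, to $5a^2-16a+12-(4a-8)v$, and to $(2v-3a+4)^2$ on the three respective ranges.

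Integrating, $S_D(E)=\frac{1}{a^2-4}\int_0^{\widetilde{\tau}}\widetilde{P}(v)^2\,dv=\frac{a^2+6a-12}{2(a+2)}$, so $\frac{2}{S_D(E)}=\frac{2(2a+4)}{a^2+6a-12}$. For $O\in E$, the divisor $\widetilde{N}(v)\vert_E$ is supported at most at the five points $O_0=\mathbf{c}\cap E$ and $O_i=\mathbf{m}_i\cap E$, with $\mathrm{ord}_{O_0}(\widetilde{N}(v)\vert_E)=\max\{0,v-2a+4\}$ and $\mathrm{ord}_{O_i}(\widetilde{N}(v)\vert_E)=\max\{0,v-a+1\}$; since $a\leqslant 3$ the former dominates the latter pointwise and $\widetilde{P}(v)\cdot E\geqslant 0$, so $S(W^E_{\bullet,\bullet};O)$ is largest at $O=O_0$, and one computes
$$
S\big(W^E_{\bullet,\bullet};O_0\big)=\frac{2}{a^2-4}\int\limits_{2a-4}^{\frac{3a-4}{2}}(v-2a+4)\big(\widetilde{P}(v)\cdot E\big)\,dv+\frac{1}{a^2-4}\int\limits_0^{\frac{3a-4}{2}}\big(\widetilde{P}(v)\cdot E\big)^2\,dv=\frac{a^2-2a+4}{2(a+2)}.
$$
Hence $\inf_{O\in E}\frac{1}{S(W^E_{\bullet,\bullet};O)}=\frac{2(a+2)}{a^2-2a+4}$, and \eqref{equation:AZ-surface-blow-up} gives
$$
\delta_P(S,D)\geqslant\min\left\{\frac{2(2a+4)}{a^2+6a-12},\ \frac{2(a+2)}{a^2-2a+4}\right\}.
$$
These two expressions coincide precisely when $a^2-10a+20=0$, i.e. at $a=5-\sqrt{5}$ in the range $(2,3]$, the first being the smaller one for $a>5-\sqrt{5}$; this yields the claimed piecewise bound.

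The main obstacle is the Zariski decomposition: one must notice that the four lines $\mathbf{m}_i$ through $P$ enter the negative part (so there are three chambers, not the two one might naively expect), and correctly locate $\widetilde{\tau}=\frac{3a-4}{2}$. Checking nefness of $\widetilde{P}(v)$ on each chamber, and that $O_0$ is the worst point of $E$, is then routine, as are the three integrals; at $a=3$ the middle chamber degenerates to a point and the formulas hold by continuity.
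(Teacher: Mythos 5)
Your proposal is correct and follows essentially the same route as the paper: blow up $P$, compute the Zariski decomposition of $f^*(D)-vE$ over the same three chambers with walls at $v=2a-4$, $a-1$, $\frac{3a-4}{2}$ and negative parts supported on the conic $\mathbf{c}$ and the four lines $\mathbf{m}_i$, obtain $S_D(E)=\frac{a^2+6a-12}{2(a+2)}$ and $S(W^E_{\bullet,\bullet};O_0)=\frac{a^2-2a+4}{2(a+2)}$, and conclude via \eqref{equation:AZ-surface-blow-up}. The only cosmetic difference is that you identify the worst point of $E$ by the pointwise domination of the coefficient of $\mathbf{c}$ over that of the $\mathbf{m}_i$ (valid since $a\leqslant 3$), whereas the paper simply computes $S(W^E_{\bullet,\bullet};O)$ in all three cases and compares.
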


\begin{proof}
Recall that $f\colon\widetilde{S}\to S$ is a blow up of $S$ at the point $P$,
and $E$ is the $f$-exceptional curve.
Note that $\widetilde{S}$ is a del Pezzo surface of degree $4$.
Let
$\widetilde{\mathbf{e}}_1$, $\widetilde{\mathbf{e}}_2$, $\widetilde{\mathbf{e}}_3$, $\widetilde{\mathbf{e}}_4$, $\widetilde{\mathbf{l}}_{12}$, $\widetilde{\mathbf{l}}_{13}$, $\widetilde{\mathbf{l}}_{14}$, $\widetilde{\mathbf{l}}_{23}$, $\widetilde{\mathbf{l}}_{24}$, $\widetilde{\mathbf{l}}_{34}$
be the strict transforms on $\widetilde{S}$ of the $(-1)$-curves
$\mathbf{e}_1$, $\mathbf{e}_2$, $\mathbf{e}_3$, $\mathbf{e}_4$, $\mathbf{l}_{12}$, $\mathbf{l}_{13}$, $\mathbf{l}_{14}$, $\mathbf{l}_{23}$, $\mathbf{l}_{24}$, $\mathbf{l}_{34}$,
respectively.
Set $\widetilde{\mathbf{h}}=f^*(\mathbf{h})$.
Let $\mathbf{c}_0$, $\mathbf{c}_1$, $\mathbf{c}_2$, $\mathbf{c}_3$, $\mathbf{c}_4$
be the $(-1)$-curves in $|2\widetilde{\mathbf{h}}-E-\widetilde{\mathbf{e}}_1-\widetilde{\mathbf{e}}_2-\widetilde{\mathbf{e}}_3-\widetilde{\mathbf{e}}_4|$,
$|\widetilde{\mathbf{h}}-E-\widetilde{\mathbf{e}}_1|$, $|\widetilde{\mathbf{h}}-E-\widetilde{\mathbf{e}}_2|$,
$|\widetilde{\mathbf{h}}-E-\widetilde{\mathbf{e}}_3|$, $|\widetilde{\mathbf{h}}-E-\widetilde{\mathbf{e}}_4|$, respectively.
Then
\begin{center}
$\mathbf{c}_0$, $\mathbf{c}_1$, $\mathbf{c}_2$, $\mathbf{c}_3$, $\mathbf{c}_4$, $\widetilde{\mathbf{e}}_1$, $\widetilde{\mathbf{e}}_2$, $\widetilde{\mathbf{e}}_3$, $\widetilde{\mathbf{e}}_4$, $\widetilde{\mathbf{l}}_{12}$, $\widetilde{\mathbf{l}}_{13}$, $\widetilde{\mathbf{l}}_{14}$, $\widetilde{\mathbf{l}}_{23}$, $\widetilde{\mathbf{l}}_{24}$, $\widetilde{\mathbf{l}}_{34}$, $E$
\end{center}
are all $(-1)$-curves in $\widetilde{S}$. These curves generates the Mori cone of the surface $\widetilde{S}$.

We compute $\widetilde{\tau}=\frac{3a-4}{2}$. Similarly, we see that
$$
\widetilde{P}(v)\sim_{\mathbb{R}} \left\{\aligned
&a\widetilde{\mathbf{h}}-\widetilde{\mathbf{e}}_1-\widetilde{\mathbf{e}}_2-\widetilde{\mathbf{e}}_3-\widetilde{\mathbf{e}}_4-vE \ \text{ if } 0\leqslant v\leqslant 2a-4, \\
&(5a-2v-8)\widetilde{\mathbf{h}}+(3-2a+v)\big(\widetilde{\mathbf{e}}_1+\widetilde{\mathbf{e}}_2+\widetilde{\mathbf{e}}_3+\widetilde{\mathbf{e}}_4\big)-(2a-4)E\ \text{ if } 2a-4\leqslant v\leqslant a-1, \\
&(3a-2v-4)\big(3\widetilde{\mathbf{h}}-\widetilde{\mathbf{e}}_1-\widetilde{\mathbf{e}}_2-\widetilde{\mathbf{e}}_3-\widetilde{\mathbf{e}}_4-2E\big) \ \text{ if } a-1\leqslant v\leqslant \frac{3a-4}{2},
\endaligned
\right.
$$
and
$$
\widetilde{N}(v)=\left\{\aligned
&0\ \text{ if } 0\leqslant v\leqslant 2a-4, \\
&(v+4-2a)\mathbf{c}_0 \ \text{ if } 2a-4\leqslant v\leqslant a-1, \\
&(v+4-2a)\mathbf{c}_0+(v+1-a)\big(\mathbf{c}_1+\mathbf{c}_2+\mathbf{c}_3+\mathbf{c}_4\big) \ \text{ if } a-1\leqslant v\leqslant \frac{3a-4}{2}.
\endaligned
\right.
$$
This gives
$$
\widetilde{P}(v)^2=\left\{\aligned
&a^2-v^2-4\ \text{ if } 0\leqslant v\leqslant 2a-4, \\
&(a-2)(5a-4v-6) \ \text{ if } 2a-4\leqslant v\leqslant a-1, \\
&(3a-2v-4)^2 \ \text{ if } a-1\leqslant v\leqslant \frac{3a-4}{2},
\endaligned
\right.
$$
and
$$
\widetilde{P}(v)\cdot E=\left\{\aligned
&v \ \text{ if } 0\leqslant v\leqslant 2a-4, \\
&2a-4 \ \text{ if } 2a-4\leqslant v\leqslant a-1, \\
&6a-4v-8\ \text{ if } a-1\leqslant v\leqslant \frac{3a-4}{2}.
\endaligned
\right.
$$
Now, integrating, we get $S_D(E)=\frac{a^2+6a-12}{2(a+2)}$.

Let $O$ be a point in $E$. Then
$$
S\big(W^E_{\bullet,\bullet};O\big)=
\frac{2}{a^2-4}\int\limits_{2a-4}^{\frac{3a-4}{2}}\mathrm{ord}_O\big(\widetilde{N}(v)\vert_{E}\big)\big(\widetilde{P}(v)\cdot E\big)dv+\frac{2(8-a)(a-2)}{3(a+2)}.
$$
Thus, if $O\not\in\mathbf{c}_0\cup\mathbf{c}_1\cup\mathbf{c}_2\cup\mathbf{c}_3\cup\mathbf{c}_4$, then $S(W^E_{\bullet,\bullet};O)=\frac{2(8-a)(a-2)}{3(a+2)}$.
Similarly, if $O\in\mathbf{c}_0$, then
$$
S\big(W^E_{\bullet,\bullet};O\big)=
\frac{2}{a^2-4}\int\limits_{2a-4}^{\frac{3a-4}{2}}(v+4-2a)\big(\widetilde{P}(v)\cdot E\big)dv+\frac{2(8-a)(a-2)}{3(a+2)}=\frac{a^2-2a+4}{2(a+2)}.
$$
Likewise, if $O\in\mathbf{c}_1\cup\mathbf{c}_2\cup\mathbf{c}_3\cup\mathbf{c}_4$, then
$$
S\big(W^E_{\bullet,\bullet};O\big)=
\frac{2}{a^2-4}\int\limits_{a-1}^{\frac{3a-4}{2}}(v + 1 - a)\big(\widetilde{P}(v)\cdot E\big)dv+\frac{2(8-a)(a-2)}{3(a+2)}=\frac{(a-2)(10-a)}{2(a+2)}.
$$
Since $\frac{a^2-2a+4}{2(a+2)}\geqslant \frac{(a-2)(10-a)}{2(a+2)}$ for $a\in(2,3]$, we have 
$$
\inf_{O\in E}\frac{1}{S\big(W^E_{\bullet,\bullet};O\big)}=\frac{2(a+2)}{a^2-2a+4}.
$$
Therefore, it follows from \eqref{equation:AZ-surface-blow-up} that
$$
\delta_P(S,D)\geqslant\min\Bigg\{\frac{2(2a+4)}{a^2+6a-12},\frac{2(a+2)}{a^2-2a+4}\Bigg\},
$$
which gives the required assertion.
\end{proof}

Combining Lemmas~\ref{lemma:dP5-e1-e2-e3-e4}, \ref{lemma:dP5-lines}, \ref{lemma:dP5-general-point}, we obtain

\begin{corollary}
\label{corollary:dP5}
One has $\delta(S,D)\geqslant\frac{3(a + 2)}{a^2+2a-2}$ for every $a\in(2,3]$.
\end{corollary}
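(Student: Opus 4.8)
The plan is to derive Corollary~\ref{corollary:dP5} by combining the three preceding lemmas according to the position of the point $P$ on $S$. Recall that a smooth quintic del Pezzo surface contains exactly ten $(-1)$-curves, namely $\mathbf{e}_1,\mathbf{e}_2,\mathbf{e}_3,\mathbf{e}_4$ and $\mathbf{l}_{12},\mathbf{l}_{13},\mathbf{l}_{14},\mathbf{l}_{23},\mathbf{l}_{24},\mathbf{l}_{34}$. Fix an arbitrary point $P\in S$ and split into three exhaustive cases. If $P\in\mathbf{e}_1\cup\mathbf{e}_2\cup\mathbf{e}_3\cup\mathbf{e}_4$, then Lemma~\ref{lemma:dP5-e1-e2-e3-e4} gives $\delta_P(S,D)\geqslant\frac{3(a+2)}{a^2+2a-2}$. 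If $P$ lies on one of the six curves $\mathbf{l}_{ij}$ but on none of the $\mathbf{e}_i$, then Lemma~\ref{lemma:dP5-lines} gives the same bound. The only remaining case is $P\in S\setminus(\mathbf{e}_1\cup\mathbf{e}_2\cup\mathbf{e}_3\cup\mathbf{e}_4\cup\mathbf{l}_{12}\cup\mathbf{l}_{13}\cup\mathbf{l}_{14}\cup\mathbf{l}_{23}\cup\mathbf{l}_{24}\cup\mathbf{l}_{34})$, which is handled by Lemma~\ref{lemma:dP5-general-point}.

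Hence the one point that actually needs checking is that, in this generic case, the two-part estimate of Lemma~\ref{lemma:dP5-general-point} is never smaller than $\frac{3(a+2)}{a^2+2a-2}$ on $(2,3]$. This amounts to verifying the two inequalities $\frac{2(a+2)}{a^2-2a+4}\geqslant\frac{3(a+2)}{a^2+2a-2}$ for $2<a\leqslant 5-\sqrt{5}$ and $\frac{2(2a+4)}{a^2+6a-12}\geqslant\frac{3(a+2)}{a^2+2a-2}$ for $5-\sqrt{5}\leqslant a\leqslant 3$. All denominators in sight are positive throughout $(2,3]$: the quadratic $a^2-2a+4$ has negative discriminant, $a^2+2a-2$ vanishes only at $-1\pm\sqrt{3}$, and $a^2+6a-12$ vanishes at $-3\pm\sqrt{21}$, whose positive root is less than $2$. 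Clearing denominators (all factors $a+2$ being positive), the first inequality becomes $a^2-10a+16\leqslant 0$, which holds on $[2,8]\supset(2,3]$, and the second becomes $a^2-10a+28\geqslant 0$, which holds for every real $a$ since its discriminant $100-112$ is negative. So $\delta_P(S,D)\geqslant\frac{3(a+2)}{a^2+2a-2}$ in the generic case as well.

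Taking the infimum over $P\in S$ of these pointwise lower bounds then yields $\delta(S,D)\geqslant\frac{3(a+2)}{a^2+2a-2}$ for every $a\in(2,3]$, as claimed. I do not anticipate any genuine obstacle here: the entire substance is contained in Lemmas~\ref{lemma:dP5-e1-e2-e3-e4}, \ref{lemma:dP5-lines} and \ref{lemma:dP5-general-point}, and the corollary is a short bookkeeping step, the only point requiring a little care being the positivity of the denominators before clearing them in the two quadratic comparisons above.
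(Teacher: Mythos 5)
Your proposal is correct and follows the paper's own route: Corollary~\ref{corollary:dP5} is obtained exactly by combining Lemmas~\ref{lemma:dP5-e1-e2-e3-e4}, \ref{lemma:dP5-lines} and \ref{lemma:dP5-general-point} according to the position of $P$, and your explicit verification that $\frac{2(a+2)}{a^2-2a+4}$ and $\frac{2(2a+4)}{a^2+6a-12}$ both dominate $\frac{3(a+2)}{a^2+2a-2}$ on $(2,3]$ (via $a^2-10a+16\leqslant 0$ and $a^2-10a+28\geqslant 0$) is the correct bookkeeping that the paper leaves implicit.
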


In fact, the proofs of Lemmas~\ref{lemma:dP5-e1-e2-e3-e4}, \ref{lemma:dP5-lines}, \ref{lemma:dP5-general-point} give $\delta(S,D)=\frac{3(a + 2)}{a^2+2a-2}$ for every $a\in(2,3]$.

\subsection{Quintic del Pezzo surface with singular point of type $\mathbb{A}_1$}
\label{subsection:dP5-A1}

Let $S$ be a del Pezzo~surface such that $K_S^2=5$, and $S$ has one singular point,
which is a singular point of type $\mathbb{A}_1$. Set~$O=\mathrm{Sing}(S)$.
It follows from \cite{CorayTsfasman} that there exists a birational morphism $\pi\colon S\to\mathbb{P}^2$ that contracts
three smooth irreducible rational curves
$\mathbf{e}_1$, $\mathbf{e}_2$, $\mathbf{e}_3$ such that $\mathbf{e}_1$ and $\mathbf{e}_2$ are $(-1)$-curves
contained in the smooth locus of the del Pezzo surface $S$, the curve $\mathbf{e}_3$ contains the point $O$,~and~$\mathbf{e}_3^2=-\frac{1}{2}$.
Set~$\mathbf{h}=\pi^*(\mathcal{O}_{\mathbb{P}^2}(1))$.
Let $\mathbf{l}_1$, $\mathbf{l}_2$, $\mathbf{l}_3$, $\mathbf{l}_4$ be irreducible curves in
$|\mathbf{h}-\mathbf{e}_1-\mathbf{e}_3|$, $|\mathbf{h}-\mathbf{e}_2-\mathbf{e}_3|$,
$|\mathbf{h}-2\mathbf{e}_3|$, $|\mathbf{h}-\mathbf{e}_1-\mathbf{e}_2|$, respectively.
Observe that $O=\mathbf{l}_1\cap\mathbf{l}_2\cap\mathbf{e}_3$, and $\mathbf{e}_1$, $\mathbf{e}_2$, $\mathbf{e}_3$, $\mathbf{l}_1$, $\mathbf{l}_2$, $\mathbf{l}_3$, $\mathbf{l}_4$
are all curves in $S$ that have negative self-intersections. 
These curves are smooth, and their intersections are given in the following table:

\begin{center}
\renewcommand\arraystretch{1.5}
\begin{tabular}{|c||c|c|c|c|c|c|c|}
\hline
$\bullet$      & $\mathbf{e}_1$ & $\mathbf{e}_2$ & $\mathbf{e}_3$ & $\mathbf{l}_1$ & $\mathbf{l}_2$ & $\mathbf{l}_3$ & $\mathbf{l}_4$\\
\hline\hline
$\mathbf{e}_1$ &    $-1$        &     $0$        &      $0$       &    $1$         &    $0$         &       $0$      & $1$        \\
\hline
$\mathbf{e}_2$ &     $0$        &     $-1$        &      $0$       &    $0$         &    $1$         &       $0$      &  $1$       \\
\hline
$\mathbf{e}_3$ &     $0$        &     $0$        &  $-\frac{1}{2}$&   $\frac{1}{2}$&  $\frac{1}{2}$ &      $1$       &  $0$     \\
\hline
$\mathbf{l}_1$ &     $1$        &     $0$        &   $\frac{1}{2}$&  $-\frac{1}{2}$&  $\frac{1}{2}$ &    $0$         &  $0$     \\
\hline
$\mathbf{l}_2$ &     $0$        &     $1$        &   $\frac{1}{2}$& $\frac{1}{2}$  & $-\frac{1}{2}$ &     $0$        &  $0$    \\
\hline
$\mathbf{l}_3$ &     $0$        &     $0$        &  $1$           &   $0$          &     $0$        &  $-1$          &   $1$  \\
\hline
$\mathbf{l}_4$ &     $1$        &     $1$        &   $0$          &   $0$          &     $0$        &    $1$         & $-1$   \\
\hline
\end{tabular}
\end{center}

Set $D=a\mathbf{h}-\mathbf{e}_1-\mathbf{e}_2-2\mathbf{e}_3$ for $a\in(2,3]$. Then $D$ is ample and $D^2=a^2-4$.

\begin{lemma}
\label{lemma:dP5-A1-e3}
Let $P$ be a point in $\mathbf{e}_3$. Then
$$
\delta_P\big(S,D\big)\geqslant
\left\{\aligned
&\frac{3(a+2)}{a^2+2a-2} \ \text{ if } 2<a\leqslant \frac{1+\sqrt{21}}{2}, \\
&\frac{1}{a-2}\ \text{ if } \frac{1+\sqrt{21}}{2}\leqslant a\leqslant 3.
\endaligned
\right.
$$
\end{lemma}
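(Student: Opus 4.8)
The plan is to apply the Abban--Zhuang inequality \eqref{equation:AZ-surface} to the flag $P\in C=\mathbf{e}_3$, where $\mathbf{e}_3$ is a smooth rational curve of negative self-intersection through $P$. Note that $P\ne O$, since $\delta_P(S,D)$ is only defined at smooth points of $S$, so $P$ is a smooth point lying on $\mathbf{e}_3$.

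First I would work out the Zariski decomposition of $D-v\mathbf{e}_3=a\mathbf{h}-\mathbf{e}_1-\mathbf{e}_2-(2+v)\mathbf{e}_3$ on $[0,\tau]$. Using the relations $\mathbf{h}\sim\mathbf{l}_3+2\mathbf{e}_3$ and $\mathbf{h}-\mathbf{e}_i\sim\mathbf{l}_i+\mathbf{e}_3$ one rewrites $D-v\mathbf{e}_3\sim(a-2)\mathbf{l}_3+\mathbf{l}_1+\mathbf{l}_2+(2a-4-v)\mathbf{e}_3$, which already suggests $\tau=2a-4$ with $P(\tau)\sim\mathbf{l}_1+\mathbf{l}_2$ and $P(\tau)^2=0$. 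Testing $D-v\mathbf{e}_3$ against the seven negative curves (using the given intersection table, in particular $\mathbf{e}_3^2=-\tfrac12$ and $\mathbf{l}_i\cdot\mathbf{e}_3=\tfrac12$), one finds $N(v)=0$, $P(v)=D-v\mathbf{e}_3$ for $v\in[0,a-2]$, whereas $N(v)=(v-a+2)\mathbf{l}_3$ for $v\in[a-2,2a-4]$: the curve $\mathbf{l}_3$ is the first, and only, one to acquire negative intersection, at $v=a-2$. This gives $P(v)^2=a^2-2-\tfrac{(2+v)^2}{2}$ and $P(v)\cdot\mathbf{e}_3=\tfrac{2+v}{2}$ on the first segment, and explicit quadratics in $v$ on the second, with $P(v)\cdot\mathbf{e}_3=a-1-\tfrac{v}{2}$ there.

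Next I would integrate. A direct computation gives $S_D(\mathbf{e}_3)=\tfrac{1}{a^2-4}\int_0^{2a-4}P(v)^2\,dv=\tfrac{(a-2)^2(a+2)}{a^2-4}=a-2$, so that $\tfrac{1}{S_D(\mathbf{e}_3)}=\tfrac{1}{a-2}$. For the refinement term one computes $\int_0^{2a-4}(P(v)\cdot\mathbf{e}_3)^2\,dv=\tfrac{a^3-8}{6}$, hence for a point $P\in\mathbf{e}_3$ lying on no other negative curve one gets $S(W^{\mathbf{e}_3}_{\bullet,\bullet};P)=\tfrac{a^2+2a+4}{6(a+2)}$, whose reciprocal exceeds $\tfrac{3(a+2)}{a^2+2a-2}$ for all $a\in(2,3]$. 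The only special point of $\mathbf{e}_3$ other than $O$ is $\mathbf{e}_3\cap\mathbf{l}_3$; there $\mathrm{ord}_P(N(v)\vert_{\mathbf{e}_3})=v-a+2$ on $[a-2,2a-4]$, and adding the corresponding integral yields $S(W^{\mathbf{e}_3}_{\bullet,\bullet};P)=\tfrac{(a-2)(a+4)}{6(a+2)}+\tfrac{a^2+2a+4}{6(a+2)}=\tfrac{a^2+2a-2}{3(a+2)}$. Feeding this into \eqref{equation:AZ-surface} gives $\delta_P(S,D)\geqslant\min\big\{\tfrac{1}{a-2},\tfrac{3(a+2)}{a^2+2a-2}\big\}$ for every $P\in\mathbf{e}_3$, and since the two quantities coincide exactly at $a=\tfrac{1+\sqrt{21}}{2}$ and cross there, this is the stated piecewise bound.

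The main obstacle I anticipate is controlling the Zariski decomposition in the presence of the $\mathbb{A}_1$ point: the half-integer numbers $\mathbf{e}_3^2=-\tfrac12$ and $\mathbf{l}_i\cdot\mathbf{e}_3=\tfrac12$ make it easy to lose a factor of two, and one must check carefully that none of $\mathbf{l}_1,\mathbf{l}_2,\mathbf{l}_4$ (nor $\mathbf{e}_1,\mathbf{e}_2$) enters $N(v)$ before $v$ reaches $\tau=2a-4$, and that $\mathrm{ord}_P$ of the restricted negative part is taken with the correct local multiplicity at $\mathbf{e}_3\cap\mathbf{l}_3$. Once the decomposition is pinned down, the rest is routine integration, exactly as in Lemmas~\ref{lemma:dP5-e1-e2-e3-e4} and \ref{lemma:dP5-lines}.
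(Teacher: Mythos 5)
Your proposal is correct and follows essentially the same route as the paper: the flag $P\in\mathbf{e}_3$, the Zariski decomposition with $N(v)=(v-a+2)\mathbf{l}_3$ on $[a-2,2a-4]$, the values $S_D(\mathbf{e}_3)=a-2$ and $S(W^{\mathbf{e}_3}_{\bullet,\bullet};P)=\frac{a^2+2a+4}{6(a+2)}$ (resp.\ $\frac{a^2+2a-2}{3(a+2)}$ at $\mathbf{e}_3\cap\mathbf{l}_3$), and the conclusion via \eqref{equation:AZ-surface} all match the paper's computation exactly.
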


\begin{proof}
Set $C=\mathbf{e}_3$. Then $\tau=2a-4$. Moreover, we have
$$
P(v)\sim_{\mathbb{R}} \left\{\aligned
&a\mathbf{h}-\mathbf{e}_1-\mathbf{e}_2-(2+v)\mathbf{e}_3  \ \text{ if } 0\leqslant v\leqslant a-2, \\
&(2a-v-2)\mathbf{h}-\mathbf{e}_1-\mathbf{e}_2+(v+2-2a)\mathbf{e}_3\ \text{ if } a-2\leqslant v\leqslant 2a-4,
\endaligned
\right.
$$
and
$$
N(v)= \left\{\aligned
&0\ \text{ if } 0\leqslant v\leqslant a-2, \\
&(v-a+2)\mathbf{l}_{3}\ \text{ if } a-2\leqslant v\leqslant 2a-4,
\endaligned
\right.
$$
which gives
$$
P(v)^2=\left\{\aligned
&\frac{2a^2-v^2-4v-8}{2} \ \text{ if } 0\leqslant v\leqslant a-2, \\
&\frac{(2a-v)(2a-v-4)}{2}\ \text{ if } a-2\leqslant v\leqslant 2a-4,
\endaligned
\right.
$$
and
$$
P(v)\cdot C=\left\{\aligned
&\frac{2+v}{2}\ \text{ if } 0\leqslant v\leqslant a-2, \\
&\frac{2a-v-2}{2}\ \text{ if } a-2\leqslant v\leqslant 2a-4.
\endaligned
\right.
$$
Integrating, we get $S_D(C)=a-2$ and
$$
S\big(W^C_{\bullet,\bullet};P\big)=\frac{2}{a^2-4}\int\limits_{a-2}^{2a-4}\mathrm{ord}_P\big(N(v)\vert_{C}\big)\big(P(v)\cdot C\big)dv+\frac{a^2+2a+4}{6(a+2)}.
$$
Thus, if $P\not\in\mathbf{l}_{3}$, then $S(W^C_{\bullet,\bullet};P)=\frac{a^2+2a+4}{6(a+2)}$.
Similarly, if $P\in\mathbf{l}_{3}$, then
$$
S\big(W^C_{\bullet,\bullet};P\big)=\frac{2}{a^2-4}\int\limits_{a-2}^{2a-4}(v+2-a)\big(P(v)\cdot C\big)dv+\frac{a^2+2a+4}{6(a+2)}=\frac{a^2+2a-2}{3(a+2)}.
$$
Now, using \eqref{equation:AZ-surface}, we obtain the required assertion.
\end{proof}

\begin{lemma}
\label{lemma:dP5-A1-e1-e2}
Let $P$ be a point in $\mathbf{e}_1\cup \mathbf{e}_2$ such that $P\not\in\mathbf{l}_1\cup\mathbf{l}_2$. Then
$$
\delta_P\big(S,D\big)\geqslant\frac{3(a+2)}{a^2+2a-2}.
$$
\end{lemma}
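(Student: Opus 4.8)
The plan is to repeat the argument of Lemma~\ref{lemma:dP5-e1-e2-e3-e4} with the curve $C=\mathbf{e}_1$. Relabeling $\mathbf{e}_1\leftrightarrow\mathbf{e}_2$ (hence $\mathbf{l}_1\leftrightarrow\mathbf{l}_2$) if necessary, which fixes $D$, $\mathbf{e}_3$, $\mathbf{l}_3$, $\mathbf{l}_4$, I may assume $P\in\mathbf{e}_1$; the hypothesis then gives $P\notin\mathbf{l}_1$, while $P$ may or may not lie on $\mathbf{l}_4$, and $\mathbf{l}_1,\mathbf{l}_4$ are the only negative curves meeting $\mathbf{e}_1$. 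Set $C=\mathbf{e}_1$. The divisor $D-v\mathbf{e}_1$ is nef for $v\in[0,a-2]$, so there $P(v)=D-v\mathbf{e}_1$ and $N(v)=0$; at $v=a-2$ its intersection numbers with $\mathbf{l}_1$ and $\mathbf{l}_4$ fall to zero, and solving $P(v)\cdot\mathbf{l}_1=P(v)\cdot\mathbf{l}_4=0$ with $\mathbf{l}_1^2=-\tfrac{1}{2}$, $\mathbf{l}_4^2=-1$, $\mathbf{l}_1\cdot\mathbf{l}_4=0$ yields, for $v\geqslant a-2$,
$$
N(v)=2(v-a+2)\mathbf{l}_1+(v-a+2)\mathbf{l}_4,\qquad P(v)=D-v\mathbf{e}_1-N(v).
$$
The coefficient $2$ in front of $\mathbf{l}_1$, forced by $\mathbf{l}_1^2=-\tfrac{1}{2}$, is the one place where the singular point intervenes.

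The main point to check is that $P(v)$ as defined is nef, with $N(v)$ negative-definite on its support, all the way up to $\tau=2a-4$, and that $\tau=2a-4$. For the value of $\tau$ I would use the conic-bundle fibre class $G=2\mathbf{h}-\mathbf{e}_1-\mathbf{e}_2-2\mathbf{e}_3$: it satisfies $G^2=0$ and $G\cdot\mathbf{e}_i=1$, $G\cdot\mathbf{l}_j=0$, so $G$ is nef, whence $(D-v\mathbf{e}_1)\cdot G=2a-4-v\geqslant 0$ for every $v\leqslant\tau$, i.e.\ $\tau\leqslant 2a-4$; and $\tau=2a-4$ because $D-(2a-4)\mathbf{e}_1=(3-a)G+2(a-2)\mathbf{l}_1+(a-2)\mathbf{l}_4$ is effective, with $P(2a-4)=(3-a)G$ nef. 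With the decomposition in hand one computes $P(v)^2=a^2-v^2-2v-4$, $P(v)\cdot C=1+v$ on $[0,a-2]$, and $P(v)^2=2(2a-v-4)(a-v-1)$, $P(v)\cdot C=3a-2v-5$ on $[a-2,2a-4]$ — the same functions as in Lemma~\ref{lemma:dP5-e1-e2-e3-e4} — so $S_D(C)=\frac{(a-2)(a+10)}{3(a+2)}$.

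Finally I would bound $S\bigl(W^C_{\bullet,\bullet};P\bigr)$ and invoke \eqref{equation:AZ-surface}. The term $\frac{1}{D^2}\int_0^{\tau}(P(v)\cdot C)^2\,dv$ equals $\frac{2a^2-5a+8}{3(a+2)}$, exactly as in the smooth case. The restriction $N(v)|_C$ is supported at $\mathbf{l}_1\cap\mathbf{e}_1$ and $\mathbf{l}_4\cap\mathbf{e}_1$ with coefficients $2(v-a+2)$ and $v-a+2$; since $P\notin\mathbf{l}_1$, $\mathrm{ord}_P(N(v)|_C)=0$ if $P\notin\mathbf{l}_4$, and $\mathrm{ord}_P(N(v)|_C)=v-a+2$ if $P\in\mathbf{l}_4$. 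In the first case $S(W^C_{\bullet,\bullet};P)=\frac{2a^2-5a+8}{3(a+2)}$; in the second, $\frac{2}{D^2}\int_{a-2}^{2a-4}(v-a+2)(3a-2v-5)\,dv=\frac{(a-2)(5-a)}{3(a+2)}$, and adding gives $S(W^C_{\bullet,\bullet};P)=\frac{a^2+2a-2}{3(a+2)}$. In both cases $S_D(C)$ and $S(W^C_{\bullet,\bullet};P)$ are at most $\frac{a^2+2a-2}{3(a+2)}$ for $a\in(2,3]$, because $(a-2)(a+10)=a^2+8a-20\leqslant a^2+2a-2\iff a\leqslant 3$ and $2a^2-5a+8\leqslant a^2+2a-2\iff(a-2)(a-5)\leqslant 0$; hence \eqref{equation:AZ-surface} gives $\delta_P(S,D)\geqslant\frac{3(a+2)}{a^2+2a-2}$. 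The genuine obstacle is the Zariski decomposition — getting the coefficient of $\mathbf{l}_1$ right and justifying the cutoff $\tau=2a-4$ via the class $G$ — after which everything is the same bookkeeping already carried out in Lemma~\ref{lemma:dP5-e1-e2-e3-e4}.
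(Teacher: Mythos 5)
Your proposal is correct and follows essentially the same route as the paper: the same flag curve $C=\mathbf{e}_1$, the same Zariski decomposition with $N(v)=(v-a+2)(2\mathbf{l}_1+\mathbf{l}_4)$ on $[a-2,2a-4]$, the same values $S_D(C)=\frac{(a+10)(a-2)}{3(a+2)}$ and $S(W^C_{\bullet,\bullet};P)\leqslant\frac{a^2+2a-2}{3(a+2)}$, and the same conclusion via \eqref{equation:AZ-surface}. The only addition is your explicit verification of $\tau=2a-4$ and of nefness at the endpoint using the conic class $G=2\mathbf{h}-\mathbf{e}_1-\mathbf{e}_2-2\mathbf{e}_3$, which the paper simply asserts.
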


\begin{proof}
We may assume that $P\in\mathbf{e}_{1}$. Set $C=\mathbf{e}_{1}$. Then $\tau=2a-4$. Moreover, we have
$$
P(v)\sim_{\mathbb{R}} \left\{\aligned
&a\mathbf{h}-(1+v)\mathbf{e}_1-\mathbf{e}_2-2\mathbf{e}_3  \ \text{ if } 0\leqslant v\leqslant a-2, \\
&(4a-3v-6)\mathbf{h}+(2v+5-3a)\mathbf{e}_1+(1-a+v)\big(\mathbf{e}_2+2\mathbf{e}_3\big)\ \text{ if } a-2\leqslant v\leqslant 2a-4,
\endaligned
\right.
$$
and
$$
N(v)= \left\{\aligned
&0\ \text{ if } 0\leqslant v\leqslant a-2, \\
&(v-a+2)\big(2\mathbf{l}_{1}+\mathbf{l}_{4}\big)\ \text{ if } a-2\leqslant v\leqslant 2a-4,
\endaligned
\right.
$$
which gives
$$
P(v)^2=\left\{\aligned
&a^2-v^2-2v-4 \ \text{ if } 0\leqslant v\leqslant a-2, \\
&2(2a-v-4)(a-v-1)\ \text{ if } a-2\leqslant u\leqslant 2a-4,
\endaligned
\right.
$$
and
$$
P(v)\cdot C=\left\{\aligned
&1+v\ \text{ if } 0\leqslant v\leqslant a-2, \\
&3a-5-2v\ \text{ if } a-2\leqslant v\leqslant 2a-4.
\endaligned
\right.
$$
Integrating, we get $S_D(C)=\frac{(a+10)(a-2)}{3(a+2)}$ and
$$
S\big(W^C_{\bullet,\bullet};P\big)=\frac{2}{a^2-4}\int\limits_{a-2}^{2a-4}\mathrm{ord}_P\big(N(v)\vert_{C}\big)\big(P(v)\cdot C\big)dv+\frac{2a^2-5a+8}{3(a+2)}.
$$
Thus, if $P\not\in\mathbf{l}_{4}$, then $S(W^C_{\bullet,\bullet};P)=\frac{2a^2-5a+8}{3(a+2)}$.
Similarly, if $P\in\mathbf{l}_{4}$, then
$$
S\big(W^C_{\bullet,\bullet};P\big)=\frac{2}{a^2-4}\int\limits_{a-2}^{2a-4}(v+2-a)\big(P(v)\cdot C\big)dv+\frac{2a^2-5a+8}{3(a+2)}=\frac{a^2+2a-2}{3(a+2)}.
$$
Since $\frac{a^2+2a-2}{3(a+2)}\geqslant \frac{(a+10)(a-2)}{3(a+2)}$ for $a\in(2,3]$,
the required assertion follows from \eqref{equation:AZ-surface}.
\end{proof}

\begin{lemma}
\label{lemma:dP5-A1-L3}
Let $P$ be a point in $\mathbf{l}_3$ such that $P\not\in\mathbf{e}_3$. Then
$$
\delta_P\big(S,D\big)\geqslant\frac{3(a+2)}{a^2+2a-2}.
$$
\end{lemma}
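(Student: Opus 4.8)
The plan is to apply \eqref{equation:AZ-surface} with the curve $C=\mathbf{l}_3$, exactly as in the proofs of Lemmas~\ref{lemma:dP5-A1-e3} and~\ref{lemma:dP5-A1-e1-e2}. Since $\mathbf{l}_3^2=-1$ and $P\in\mathbf{l}_3$, this is a legitimate choice, and $P$ is automatically a smooth point of $S$ because $\mathrm{Sing}(S)=O\in\mathbf{e}_3$ while $P\notin\mathbf{e}_3$. The first step is the Zariski decomposition of $D-vC$. Writing $D-vC=(a-v)\mathbf{h}-\mathbf{e}_1-\mathbf{e}_2+(2v-2)\mathbf{e}_3$ and intersecting with the seven negative curves $\mathbf{e}_1,\mathbf{e}_2,\mathbf{e}_3,\mathbf{l}_1,\mathbf{l}_2,\mathbf{l}_3,\mathbf{l}_4$, one reads off from the intersection table that the only curves that can enter the negative part are $\mathbf{l}_4$ (where $(D-vC)\cdot\mathbf{l}_4=a-v-2$) and $\mathbf{e}_3$ (where $(D-vC)\cdot\mathbf{e}_3=1-v$), all other intersection numbers being positive; since $a\in(2,3]$, the curve $\mathbf{l}_4$ enters first at $v=a-2$ and $\mathbf{e}_3$ enters at $v=1$. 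This yields $\tau=a-1$ with three ranges: $N(v)=0$ on $[0,a-2]$, $N(v)=(v+2-a)\mathbf{l}_4$ on $[a-2,1]$, and $N(v)=(v+2-a)\mathbf{l}_4+(2v-2)\mathbf{e}_3$ on $[1,a-1]$, with $P(v)\cdot C$ equal to $a-2+v$, $2a-4$, $2a-2v-2$ and $P(v)^2$ equal to $a^2-2av-v^2+4v-4$, $2(a-1-v)^2-2(v-1)^2$, $2(a-v-1)^2$ on the three ranges respectively.

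Integrating $P(v)^2$ over $[0,a-1]$ gives $S_D(C)=\frac{a^3-6a+4}{3(a^2-4)}=\frac{a^2+2a-2}{3(a+2)}$, so $A_S(C)=1$ already forces $\delta_P(S,D)\leqslant\frac{3(a+2)}{a^2+2a-2}$. For the matching lower bound I would compute $S(W^C_{\bullet,\bullet};P)$. The key observation is that $\mathrm{Supp}(N(v))$ meets $C=\mathbf{l}_3$ only at the two points $\mathbf{l}_3\cap\mathbf{l}_4$ and $\mathbf{l}_3\cap\mathbf{e}_3$ (these are the only negative curves meeting $\mathbf{l}_3$, by the table), and the hypothesis $P\notin\mathbf{e}_3$ kills the contribution of the $\mathbf{e}_3$-part of $N(v)$. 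Hence there are only two subcases. If $P\notin\mathbf{l}_4$, then $\mathrm{ord}_P(N(v)\vert_C)=0$ throughout and $S(W^C_{\bullet,\bullet};P)=\frac{1}{a^2-4}\int_0^{a-1}(P(v)\cdot C)^2\,dv=\frac{(a-2)(14-a)}{3(a+2)}$. If $P=\mathbf{l}_3\cap\mathbf{l}_4$, then $\mathrm{ord}_P(N(v)\vert_C)=v+2-a$ on $[a-2,a-1]$, the extra term $\frac{2}{a^2-4}\int_{a-2}^{a-1}(v+2-a)(P(v)\cdot C)\,dv$ equals $\frac{2(1-(3-a)^3)}{3(a^2-4)}$, and the total collapses to $\frac{a^3-6a+4}{3(a^2-4)}=\frac{a^2+2a-2}{3(a+2)}=S_D(C)$.

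To conclude, since $a^2+2a-2-(a-2)(14-a)=2(a^2-7a+13)$ has negative discriminant and is therefore positive on $(2,3]$, we have $a^2+2a-2>(a-2)(14-a)$, so in both subcases $\min\big\{1/S_D(C),1/S(W^C_{\bullet,\bullet};P)\big\}=\frac{3(a+2)}{a^2+2a-2}$, and \eqref{equation:AZ-surface} gives the claim. The main obstacle is bookkeeping rather than ideas: one must verify carefully that $\mathbf{l}_4$ and $\mathbf{e}_3$ are indeed the only curves entering the negative part and that they enter in that order (this is where $a\leqslant 3$ is used), and keep the three-piece Zariski decomposition and the corresponding piecewise integrals straight; the integrals themselves are elementary.
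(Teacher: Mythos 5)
Your proposal is correct and follows essentially the same route as the paper's own proof: the flag $C=\mathbf{l}_3$, the same three-piece Zariski decomposition with $N(v)$ supported on $\mathbf{l}_4$ and $\mathbf{e}_3$, the same values $S_D(C)=\frac{a^2+2a-2}{3(a+2)}$ and $S(W^C_{\bullet,\bullet};P)=\frac{(a-2)(14-a)}{3(a+2)}$ (or $=S_D(C)$ when $P\in\mathbf{l}_4$), and the conclusion via \eqref{equation:AZ-surface}. Your alternative forms $2(a-1-v)^2-2(v-1)^2=2(a-2)(a-2v)$ and $\frac{2(1-(3-a)^3)}{3(a^2-4)}$ for the correction term agree with the paper's expressions, so the computations check out.
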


\begin{proof}
Set $C=\mathbf{l}_{3}$. Then $\tau=a-1$. Moreover, we have
$$
P(v)\sim_{\mathbb{R}} \left\{\aligned
&(a-v)\mathbf{h}-\mathbf{e}_1-\mathbf{e}_2-(2-2v)\mathbf{e}_3\ \text{ if } 0\leqslant v\leqslant a-2, \\
&(2a-2v-2)\mathbf{h}+(1-a+v)\big(\mathbf{e}_1+\mathbf{e}_2\big)+(2v-2)\mathbf{e}_3\ \text{ if } a-2\leqslant v\leqslant 1, \\
&(2a-2v-2)\mathbf{h}+(1-a+v)\big(\mathbf{e}_1+\mathbf{e}_2\big)\ \text{ if } 1\leqslant v\leqslant a-1,
\endaligned
\right.
$$
and
$$
N(v)=\left\{\aligned
&0  \ \text{ if } 0\leqslant v\leqslant a-2, \\
&(v+2-a)\mathbf{l}_{4} \ \text{ if } a-2\leqslant v\leqslant 1, \\
&(v+2-a)\mathbf{l}_{4}+2(v-1)\mathbf{e}_3\ \text{ if } 1\leqslant v\leqslant a-1.
\endaligned
\right.
$$
This gives
$$
P(v)^2=\left\{\aligned
&a^2-2av-v^2+4v-4  \ \text{ if } 0\leqslant v\leqslant a-2, \\
&2(a-2)(a-2v) \ \text{ if } a-2\leqslant v\leqslant 1, \\
&2(a-v-1)^2\ \text{ if } 1\leqslant v\leqslant a-1,
\endaligned,
\right.
$$
and
$$
P(v)\cdot C=\left\{\aligned
&a+v-2  \ \text{ if } 0\leqslant v\leqslant a-2, \\
&2a-4 \ \text{ if } a-2\leqslant v\leqslant 1, \\
&2a-2v-2\ \text{ if } 1\leqslant v\leqslant a-1.
\endaligned,
\right.
$$
Now, integrating, we get $S_D(C)=\frac{a^2+2a-2}{3(a+2)}$, which gives $\delta_P(S,D)\leqslant\frac{3(a+2)}{a^2+2a-2}$.
Similarly, we get
$$
S\big(W^C_{\bullet,\bullet};P\big)=
\frac{2}{a^2-4}\int\limits_{a-2}^{a-1}\mathrm{ord}_P\big(N(v)\vert_{C}\big)\big(P(v)\cdot C\big)dv+\frac{(a-2)(14-a)}{3(a+2)}.
$$
Thus, if $P\not\in\mathbf{l}_{4}$, then $S(W^C_{\bullet,\bullet};P)=\frac{(a-2)(14-a)}{3(a+2)}$.
Similarly, if $P\in\mathbf{l}_{4}$, then
$$
S\big(W^C_{\bullet,\bullet};P\big)=\frac{2}{a^2-4}\int\limits_{a-2}^{a-1}(v+2-a)\big(P(v)\cdot C\big)dv+\frac{(a-2)(14-a)}{3(a+2)}=\frac{a^2+2a-2}{3(a+2)}=S_D(C),
$$
so the required assertion follows from \eqref{equation:AZ-surface}.
\end{proof}

\begin{lemma}
\label{lemma:dP5-A1-L4}
Let $P$ be a point in $\mathbf{l}_4$ such that $P\not\in\mathbf{e}_1\cup\mathbf{e}_2\cup\mathbf{l}_3$. Then
$$
\delta_P\big(S,D\big)\geqslant\frac{3(a+2)}{a^2+2a-2}.
$$
\end{lemma}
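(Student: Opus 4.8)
The plan is to run the argument of Lemma~\ref{lemma:dP5-A1-L3} with the $(-1)$-curve $C=\mathbf{l}_4$ in place of $\mathbf{l}_3$, and then to read off the conclusion from \eqref{equation:AZ-surface}. Since $C^2=-1$ and $D\cdot C=a-2$, the first task is the Zariski decomposition of $D-vC\sim_{\mathbb{R}}(a-v)\mathbf{h}-(1-v)(\mathbf{e}_1+\mathbf{e}_2)-2\mathbf{e}_3$. Using the intersection table I expect this class to be nef on $[0,a-2]$, to become orthogonal to $\mathbf{l}_3$ at $v=a-2$ and to both $\mathbf{e}_1$ and $\mathbf{e}_2$ at $v=1$, and to cease being pseudo-effective at $\tau=a-1$; accordingly $N(v)=0$ on $[0,a-2]$, $N(v)=(v+2-a)\mathbf{l}_3$ on $[a-2,1]$, and $N(v)=(v+2-a)\mathbf{l}_3+(v-1)(\mathbf{e}_1+\mathbf{e}_2)$ on $[1,a-1]$, with $P(v)=(2a-2v-2)(\mathbf{h}-\mathbf{e}_3)$ on the last interval. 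The step that needs the most care — though it is routine given the table — is checking that each $P(v)$ is genuinely nef (hence equal to the positive part) on the three intervals, i.e.\ non-negative against all seven negative curves $\mathbf{e}_1,\mathbf{e}_2,\mathbf{e}_3,\mathbf{l}_1,\mathbf{l}_2,\mathbf{l}_3,\mathbf{l}_4$, and that the chamber walls are exactly $v=a-2$ and $v=1$ with $\tau=a-1$.

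Next I would integrate. A short computation should give $P(v)^2$ equal to $a^2-2av-v^2+4v-4$, then $2(a-2)(a-2v)$, then $2(a-v-1)^2$ on the three intervals, and $P(v)\cdot C$ equal to $a+v-2$, then $2a-4$, then $2a-2v-2$; these are the very same functions appearing in the proof of Lemma~\ref{lemma:dP5-A1-L3} (unsurprisingly, since $D\cdot\mathbf{l}_4=D\cdot\mathbf{l}_3$ and the combinatorics of the two decompositions coincide), so $S_D(C)=\frac{a^2+2a-2}{3(a+2)}$ and already $\delta_P(S,D)\leqslant\frac{3(a+2)}{a^2+2a-2}$.

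For $S(W^C_{\bullet,\bullet};P)$ the decisive observation is that the only negative curves of $S$ meeting $\mathbf{l}_4$ are $\mathbf{e}_1$, $\mathbf{e}_2$ and $\mathbf{l}_3$, each transversally in one point, and by hypothesis $P$ lies on none of them; hence $\mathrm{ord}_P(N(v)\vert_C)=0$ for every $v\in[0,\tau]$, so $S(W^C_{\bullet,\bullet};P)=\frac{1}{D^2}\int_0^{a-1}(P(v)\cdot C)^2\,dv=\frac{(a-2)(14-a)}{3(a+2)}$, exactly as in the case $P\notin\mathbf{l}_4$ of Lemma~\ref{lemma:dP5-A1-L3}. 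Finally I compare the two values: $a^2+2a-2-(a-2)(14-a)=2(a^2-7a+13)$ has negative discriminant, hence is positive, so $S(W^C_{\bullet,\bullet};P)<S_D(C)$, and \eqref{equation:AZ-surface} yields $\delta_P(S,D)\geqslant 1/S_D(C)=\frac{3(a+2)}{a^2+2a-2}$, which is the asserted bound. I do not anticipate any genuine obstacle beyond the Zariski-decomposition bookkeeping described above; as a sanity check I would verify $P(v)+N(v)\sim_{\mathbb{R}}D-vC$ and that $P(v)$ meets the components of $N(v)$ in degree $0$ on each interval.
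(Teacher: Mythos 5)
Your proposal is correct and follows essentially the same route as the paper's proof: the same Zariski decomposition of $D-v\mathbf{l}_4$ with walls at $v=a-2$ and $v=1$ and $\tau=a-1$, the same values of $S_D(C)=\frac{a^2+2a-2}{3(a+2)}$ and $S(W^C_{\bullet,\bullet};P)=\frac{(a-2)(14-a)}{3(a+2)}$ (the order term vanishing because $P\not\in\mathbf{e}_1\cup\mathbf{e}_2\cup\mathbf{l}_3$), and the same conclusion via \eqref{equation:AZ-surface}. The only difference is cosmetic: you verify the comparison $S(W^C_{\bullet,\bullet};P)\leqslant S_D(C)$ by a discriminant computation, which the paper simply asserts.
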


\begin{proof}
Set $C=\mathbf{l}_{4}$. Then $\tau=a-1$. Moreover, we have
$$
P(v)\sim_{\mathbb{R}} \left\{\aligned
&(a-v)\mathbf{h}-(1-v)\big(\mathbf{e}_1+\mathbf{e}_2\big)-2\mathbf{e}_3\ \text{ if } 0\leqslant v\leqslant a-2, \\
&(2a-2v-2)\mathbf{h}+(v-1)\big(\mathbf{e}_1+\mathbf{e}_2\big)+(2+2v-2a)\mathbf{e}_3\ \text{ if } a-2\leqslant v\leqslant 1, \\
&(2a-2v-2)\mathbf{h}+(2+2v-2a)\mathbf{e}_3\big)\ \text{ if } 1\leqslant v\leqslant a-1,
\endaligned
\right.
$$
and
$$
N(v)=\left\{\aligned
&0  \ \text{ if } 0\leqslant v\leqslant a-2, \\
&(v+2-a)\mathbf{l}_{3} \ \text{ if } a-2\leqslant v\leqslant 1, \\
&(v+2-a)\mathbf{l}_{3}+(v-1)\big(\mathbf{e}_1+\mathbf{e}_2\big)\ \text{ if } 1\leqslant v\leqslant a-1.
\endaligned
\right.
$$
This gives
$$
P(v)^2=\left\{\aligned
&a^2-2av-v^2+4v-4  \ \text{ if } 0\leqslant v\leqslant a-2, \\
&2(a-2)(a-2v) \ \text{ if } a-2\leqslant v\leqslant 1, \\
&2(a-v-1)^2\ \text{ if } 1\leqslant v\leqslant a-1,
\endaligned,
\right.
$$
and
$$
P(v)\cdot C=\left\{\aligned
&a+v-2  \ \text{ if } 0\leqslant v\leqslant a-2, \\
&2a-4 \ \text{ if } a-2\leqslant v\leqslant 1, \\
&2a-2v-2\ \text{ if } 1\leqslant v\leqslant a-1.
\endaligned,
\right.
$$
Now, integrating, we get $S_D(C)=\frac{a^2+2a-2}{3(a+2)}$, which gives $\delta_P(S,D)\leqslant\frac{3(a+2)}{a^2+2a-2}$.
Similarly, we get
$$
S\big(W^C_{\bullet,\bullet};P\big)=
\frac{2}{a^2-4}\int\limits_{a-2}^{a-1}\mathrm{ord}_P\big(N(v)\vert_{C}\big)\big(P(v)\cdot C\big)dv+\frac{(a-2)(14-a)}{3(a+2)}=\frac{(a-2)(14-a)}{3(a+2)},
$$
because $P\not\in\mathbf{e}_1\cup\mathbf{e}_2\cup\mathbf{l}_3$.
Since $\frac{(a-2)(14-a)}{3(a+2)}\leqslant S_D(C)$, we have $\delta_P(S,D)=\frac{3(a+2)}{a^2+2a-2}$ by \eqref{equation:AZ-surface}.
\end{proof}

\begin{lemma}
\label{lemma:dP5-A1-general-point}
Suppose that $P\not\in\mathbf{e}_1\cup\mathbf{e}_2\cup\mathbf{e}_3\cup\mathbf{l}_{1}\cup\mathbf{l}_{2}\cup\mathbf{l}_{3}\cup\mathbf{l}_{4}$.
Then
$$
\delta_P\big(S,D\big)\geqslant
\left\{\aligned
&\frac{2(a+2)}{a^2-2a+4} \ \text{ if } 2<a\leqslant 5-\sqrt{5}, \\
&\frac{2(2a+4)}{a^2+6a-12}\ \text{ if }  5-\sqrt{5}\leqslant u\leqslant 3.
\endaligned
\right.
$$
\end{lemma}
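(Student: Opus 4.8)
The plan is to imitate, almost verbatim, the proof of Lemma~\ref{lemma:dP5-general-point}: since $P$ lies on none of the seven negative curves $\mathbf{e}_1,\mathbf{e}_2,\mathbf{e}_3,\mathbf{l}_1,\mathbf{l}_2,\mathbf{l}_3,\mathbf{l}_4$, we blow it up and work on the resulting surface. Let $f\colon\widetilde{S}\to S$ be the blow up of $S$ at $P$, let $E$ be the $f$-exceptional curve, set $\widetilde{\mathbf{h}}=f^*(\mathbf{h})$, and let $\widetilde{\mathbf{e}}_1,\widetilde{\mathbf{e}}_2,\widetilde{\mathbf{e}}_3$ be the strict transforms of $\mathbf{e}_1,\mathbf{e}_2,\mathbf{e}_3$. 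Because $P$ is a general point, $\widetilde{S}$ is a del Pezzo surface of degree $4$ whose only singular point is the Du Val $\mathbb{A}_1$ point lying on $\widetilde{\mathbf{e}}_3$ (with $\widetilde{\mathbf{e}}_3^2=-\tfrac12$), and $f^*(D)=a\widetilde{\mathbf{h}}-\widetilde{\mathbf{e}}_1-\widetilde{\mathbf{e}}_2-2\widetilde{\mathbf{e}}_3$. Everything then reduces to computing the Zariski decomposition of $f^*(D)-vE$ and feeding it into \eqref{equation:AZ-surface-blow-up}.

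First I would list the $(-1)$-curves on $\widetilde{S}$ relevant to the decomposition: the strict transforms of $\mathbf{e}_1,\mathbf{e}_2,\mathbf{l}_1,\mathbf{l}_2,\mathbf{l}_3,\mathbf{l}_4$, the curve $E$, the curve $\widetilde{\mathbf{e}}_3$ (of self-intersection $-\tfrac12$), and the strict transforms of the conics on $S$ through $P$; the last family produces in particular a $(-1)$-curve $\mathbf{c}_0\in|2\widetilde{\mathbf{h}}-E-\widetilde{\mathbf{e}}_1-\widetilde{\mathbf{e}}_2-2\widetilde{\mathbf{e}}_3|$ and $(-1)$-curves $\mathbf{c}_1\in|\widetilde{\mathbf{h}}-E-\widetilde{\mathbf{e}}_1|$, $\mathbf{c}_2\in|\widetilde{\mathbf{h}}-E-\widetilde{\mathbf{e}}_2|$. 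One must record carefully which curves enter the negative part, because—exactly as in the various cases of Lemmas~\ref{lemma:dP5-A1-e1-e2}, \ref{lemma:dP5-A1-L3} and \ref{lemma:dP5-A1-L4}—the curve meeting the $\mathbb{A}_1$ point is picked up with a multiplicity dictated by $\widetilde{\mathbf{e}}_3^2=-\tfrac12$. I expect $\widetilde{\tau}=\tfrac{3a-4}{2}$ and a three-piece decomposition with breakpoints $v=2a-4$ and $v=a-1$, whose negative part is $0$, then a multiple of $\mathbf{c}_0$, then a multiple of $\mathbf{c}_0$ together with multiples of the remaining conic-transforms. Integrating then gives $S_D(E)=\tfrac{a^2+6a-12}{2(a+2)}$, matching the smooth case.

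For the second term of \eqref{equation:AZ-surface-blow-up}, I would evaluate $S\big(W^E_{\bullet,\bullet};O\big)$ for $O\in E$ by splitting into the cases $O\notin\mathbf{c}_0\cup\mathbf{c}_1\cup\mathbf{c}_2\cup\cdots$, $O\in\mathbf{c}_0$, and $O$ on one of the remaining conic-transforms; the three values should come out to $\tfrac{2(8-a)(a-2)}{3(a+2)}$, $\tfrac{a^2-2a+4}{2(a+2)}$ and $\tfrac{(a-2)(10-a)}{2(a+2)}$, as in Lemma~\ref{lemma:dP5-general-point}. Since $\tfrac{a^2-2a+4}{2(a+2)}$ dominates the other two for $a\in(2,3]$, \eqref{equation:AZ-surface-blow-up} yields
$$
\delta_P(S,D)\geqslant\min\left\{\frac{2(2a+4)}{a^2+6a-12},\ \frac{2(a+2)}{a^2-2a+4}\right\},
$$
and comparing the two fractions (they agree exactly at $a=5\pm\sqrt5$) produces the stated piecewise bound. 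The main obstacle is the bookkeeping in the second paragraph—identifying the full set of negative curves on $\widetilde{S}$ and hence the precise shape of the Zariski decomposition of $f^*(D)-vE$, where the presence of the $\mathbb{A}_1$ point changes the curve configuration relative to the smooth quintic; once that is pinned down, the remaining integrals are routine and identical in form to those in the proof of Lemma~\ref{lemma:dP5-general-point}.
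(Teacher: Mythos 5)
Your strategy is exactly the paper's: blow up $P$, compute the Zariski decomposition of $f^*(D)-vE$ with $\widetilde{\tau}=\frac{3a-4}{2}$ and breakpoints $2a-4$ and $a-1$, get $S_D(E)=\frac{a^2+6a-12}{2(a+2)}$, and then run the case analysis over $O\in E$ via \eqref{equation:AZ-surface-blow-up}. However, there is a concrete omission in your bookkeeping, precisely at the step you flagged as the main obstacle. On $\widetilde{S}$ there is also a negative curve $\mathbf{c}_3\in|\widetilde{\mathbf{h}}-E-\widetilde{\mathbf{e}}_3|$ (the transform of the curve in $|\mathbf{h}-\mathbf{e}_3|$ through $P$), and on the last interval the negative part is $(v+4-2a)\mathbf{c}_0+(v+1-a)\big(\mathbf{c}_1+\mathbf{c}_2+2\mathbf{c}_3\big)$, with $\mathbf{c}_3$ carrying coefficient $2(v+1-a)$ because $D$ has coefficient $2$ on $\mathbf{e}_3$. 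Hence there is a fourth case $O\in\mathbf{c}_3$, giving $S\big(W^E_{\bullet,\bullet};O\big)=\frac{(a-2)(14-a)}{3(a+2)}$, which you did not list. Your assertion that $\frac{a^2-2a+4}{2(a+2)}$ dominates all other values is then false on part of the range: $\frac{(a-2)(14-a)}{3(a+2)}>\frac{a^2-2a+4}{2(a+2)}$ exactly when $a>\frac{19-\sqrt{21}}{5}\approx 2.88$, so for such $a$ the infimum over $O\in E$ is governed by $\mathbf{c}_3$, not $\mathbf{c}_0$.

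The stated lemma still follows, but only after the extra comparison you skipped: one checks $3(a^2+6a-12)-4(a-2)(14-a)=7a^2-46a+76>0$ for all $a$, so $\frac{3(a+2)}{(a-2)(14-a)}\geqslant\frac{2(2a+4)}{a^2+6a-12}$, and on $(2,5-\sqrt{5}]$ one has $a\leqslant\frac{19-\sqrt{21}}{5}$, so $\frac{3(a+2)}{(a-2)(14-a)}\geqslant\frac{2(a+2)}{a^2-2a+4}$ there as well; thus the four-term minimum in the paper's proof collapses to your two-term one. That this works out is a feature of the multiplicity $2$: in the analogous $\mathbb{A}_2$ computation (Lemma~\ref{lemma:dP5-A2-general-point}) the corresponding curve enters with multiplicity $3$ and its term $\frac{6(a+2)}{(a-2)(26-a)}$ genuinely lowers the bound for $a\geqslant\frac{19-\sqrt{21}}{5}$, so the case you dropped cannot be dismissed without computation. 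Please add the $\mathbf{c}_3$ case and the comparison above; with that, your argument coincides with the paper's proof.
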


\begin{proof}
Let $\widetilde{\mathbf{e}}_1$, $\widetilde{\mathbf{e}}_2$, $\widetilde{\mathbf{e}}_3$, $\widetilde{\mathbf{l}}_{1}$, $\widetilde{\mathbf{l}}_{2}$, $\widetilde{\mathbf{l}}_{3}$, $\widetilde{\mathbf{l}}_{4}$
be the strict transforms on $\widetilde{S}$ of the curves $\mathbf{e}_1$, $\mathbf{e}_2$, $\mathbf{e}_3$, $\mathbf{l}_{1}$, $\mathbf{l}_{2}$, $\mathbf{l}_{3}$, $\mathbf{l}_{4}$, respectively.
Set $\widetilde{\mathbf{h}}=f^*(\mathbf{h})$.
Let $\mathbf{c}_0$, $\mathbf{c}_1$, $\mathbf{c}_2$, $\mathbf{c}_3$
be the curves in $|2\widetilde{\mathbf{h}}-E-\widetilde{\mathbf{e}}_1-\widetilde{\mathbf{e}}_2-2\widetilde{\mathbf{e}}_3|$,
$|\widetilde{\mathbf{h}}-E-\widetilde{\mathbf{e}}_1|$, $|\widetilde{\mathbf{h}}-E-\widetilde{\mathbf{e}}_2|$,
$|\widetilde{\mathbf{h}}-E-\widetilde{\mathbf{e}}_3|$, respectively.
Then
\begin{center}
$\mathbf{c}_0$, $\mathbf{c}_1$, $\mathbf{c}_2$, $\mathbf{c}_3$, $\widetilde{\mathbf{e}}_1$, $\widetilde{\mathbf{e}}_2$, $\widetilde{\mathbf{e}}_3$, $\widetilde{\mathbf{l}}_{1}$, $\widetilde{\mathbf{l}}_{2}$, $\widetilde{\mathbf{l}}_{3}$, $\widetilde{\mathbf{l}}_{4}$, $E$
\end{center}
are all curves in $\widetilde{S}$ that have negative self-intersections \cite{CorayTsfasman}.

We compute $\widetilde{\tau}=\frac{3a-4}{2}$. Similarly, we see that
$$
\widetilde{P}(v)\sim_{\mathbb{R}} \left\{\aligned
&a\widetilde{\mathbf{h}}-\widetilde{\mathbf{e}}_1-\widetilde{\mathbf{e}}_2-2\widetilde{\mathbf{e}}_3-vE \ \text{ if } 0\leqslant v\leqslant 2a-4, \\
&(5a-2v-8)\widetilde{\mathbf{h}}+(3-2a+v)\big(\widetilde{\mathbf{e}}_1+\widetilde{\mathbf{e}}_2+2\widetilde{\mathbf{e}}_3\big)-(2a-4)E\ \text{ if } 2a-4\leqslant v\leqslant a-1, \\
&(3a-2v-4)\big(3\widetilde{\mathbf{h}}-\widetilde{\mathbf{e}}_1-\widetilde{\mathbf{e}}_2-2\widetilde{\mathbf{e}}_3-2E\big) \ \text{ if } a-1\leqslant v\leqslant \frac{3a-4}{2},
\endaligned
\right.
$$
and
$$
\widetilde{N}(v)=\left\{\aligned
&0\ \text{ if } 0\leqslant v\leqslant 2a-4, \\
&(v+4-2a)\mathbf{c}_0 \ \text{ if } 2a-4\leqslant v\leqslant a-1, \\
&(v+4-2a)\mathbf{c}_0+(v+1-a)\big(\mathbf{c}_1+\mathbf{c}_2+2\mathbf{c}_3\big) \ \text{ if } a-1\leqslant v\leqslant \frac{3a-4}{2}.
\endaligned
\right.
$$
This gives
$$
\widetilde{P}(v)^2=\left\{\aligned
&a^2-v^2-4\ \text{ if } 0\leqslant v\leqslant 2a-4, \\
&(a-2)(5a-4v-6) \ \text{ if } 2a-4\leqslant v\leqslant a-1, \\
&(3a-2v-4)^2 \ \text{ if } a-1\leqslant v\leqslant \frac{3a-4}{2},
\endaligned
\right.
$$
and
$$
\widetilde{P}(v)\cdot E=\left\{\aligned
&v \ \text{ if } 0\leqslant v\leqslant 2a-4, \\
&2a-4 \ \text{ if } 2a-4\leqslant v\leqslant a-1, \\
&6a-4v-8\ \text{ if } a-1\leqslant v\leqslant \frac{3a-4}{2}.
\endaligned
\right.
$$
Now, integrating, we get $S_D(E)=\frac{a^2+6a-12}{2(a+2)}$.

Let $O$ be a point in $E$. Then
$$
S\big(W^E_{\bullet,\bullet};O\big)=
\frac{2}{a^2-4}\int\limits_{2a-4}^{\frac{3a-4}{2}}\mathrm{ord}_O\big(\widetilde{N}(v)\vert_{E}\big)\big(\widetilde{P}(v)\cdot E\big)dv+\frac{2(8-a)(a-2)}{3(a+2)}.
$$
Thus, if $O\not\in\mathbf{c}_0\cup\mathbf{c}_1\cup\mathbf{c}_2\cup\mathbf{c}_3$, then $S(W^E_{\bullet,\bullet};O)=\frac{2(8-a)(a-2)}{3(a+2)}$.
Similarly, if $O\in\mathbf{c}_0$, then
$$
S\big(W^E_{\bullet,\bullet};O\big)=
\frac{2}{a^2-4}\int\limits_{2a-4}^{\frac{3a-4}{2}}(v+4-2a)\big(\widetilde{P}(v)\cdot E\big)dv+\frac{2(8-a)(a-2)}{3(a+2)}=\frac{a^2-2a+4}{2(a+2)}.
$$
Likewise, if $O\in\mathbf{c}_1\cup\mathbf{c}_2$, then
$$
S\big(W^E_{\bullet,\bullet};O\big)=
\frac{2}{a^2-4}\int\limits_{a-1}^{\frac{3a-4}{2}}(v + 1 - a)\big(\widetilde{P}(v)\cdot E\big)dv+\frac{2(8-a)(a-2)}{3(a+2)}=\frac{(a-2)(10-a)}{2(a+2)}.
$$
Finally, if $O\in\mathbf{c}_3$, then
$$
S\big(W^E_{\bullet,\bullet};O\big)=
\frac{2}{a^2-4}\int\limits_{a-1}^{\frac{3a-4}{2}}2(v+1-a)\big(\widetilde{P}(v)\cdot E\big)dv+\frac{2(8-a)(a-2)}{3(a+2)}=\frac{(a-2)(14-a)}{3(a+2)}.
$$
Therefore, using \eqref{equation:AZ-surface-blow-up}, we get
$$
\delta_P(S,D)\geqslant\min\Bigg\{\frac{4(a+2)}{a^2+6a-12},\frac{2(a+2)}{a^2-2a+4},\frac{2(a+2)}{(a-2)(10-a)},\frac{3(a+2)}{(a-2)(14-a)}\Bigg\},
$$
which implies the required assertion.
\end{proof}

Combining Lemmas~\ref{lemma:dP5-A1-e3}, \ref{lemma:dP5-A1-e1-e2}, \ref{lemma:dP5-A1-L3}, \ref{lemma:dP5-A1-L4}, \ref{lemma:dP5-A1-general-point}, we obtain

\begin{corollary}
\label{corollary:dP5-A1}
Let $P$ be a point in $S$ such that $P\not\in\mathbf{l}_1\cup\mathbf{l}_2$. If $P\in\mathbf{e}_1\cup\mathbf{e}_2\cup\mathbf{e}_3$, then
$$
\delta_P\big(S,D\big)\geqslant
\left\{\aligned
&\frac{3(a+2)}{a^2+2a-2} \ \text{ if } 2<a\leqslant \frac{1+\sqrt{21}}{2}, \\
&\frac{1}{a-2}\ \text{ if } \frac{1+\sqrt{21}}{2}\leqslant u\leqslant 3.
\endaligned
\right.
$$
If $P\not\in\mathbf{e}_1\cup\mathbf{e}_2\cup\mathbf{e}_3$, then $\delta_P(S,D)\geqslant\frac{3(a+2)}{a^2+2a-2}$ for every $a\in(2,3]$.
\end{corollary}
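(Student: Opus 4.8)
The plan is a straightforward stratification argument: decompose $S$ according to which of the negative curves $\mathbf{e}_1,\mathbf{e}_2,\mathbf{e}_3,\mathbf{l}_1,\mathbf{l}_2,\mathbf{l}_3,\mathbf{l}_4$ the point $P$ lies on, and on each stratum invoke whichever of Lemmas~\ref{lemma:dP5-A1-e3}, \ref{lemma:dP5-A1-e1-e2}, \ref{lemma:dP5-A1-L3}, \ref{lemma:dP5-A1-L4}, \ref{lemma:dP5-A1-general-point} applies. Since the hypothesis rules out $P\in\mathbf{l}_1\cup\mathbf{l}_2$, only the strata cut out by $\mathbf{e}_1,\mathbf{e}_2,\mathbf{e}_3,\mathbf{l}_3,\mathbf{l}_4$ and their complement need to be considered, and the five lemmas are designed to match them exactly; the work is entirely in checking that each lemma's hypotheses are satisfied and that the bound it yields is at least the bound asserted in the corollary.

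For the first assertion I would assume $P\in\mathbf{e}_1\cup\mathbf{e}_2\cup\mathbf{e}_3$. If $P\in\mathbf{e}_3$, Lemma~\ref{lemma:dP5-A1-e3} is verbatim the claim. If $P\in\mathbf{e}_1\cup\mathbf{e}_2$, then $P\notin\mathbf{l}_1\cup\mathbf{l}_2$ by hypothesis, so Lemma~\ref{lemma:dP5-A1-e1-e2} gives $\delta_P(S,D)\geqslant\frac{3(a+2)}{a^2+2a-2}$; I would then observe that the two-piece function in the corollary is precisely $\min\bigl\{\frac{3(a+2)}{a^2+2a-2},\frac{1}{a-2}\bigr\}$, the crossover occurring at $a=\frac{1+\sqrt{21}}{2}$, the positive root of $a^2-a-5=0$ (equivalently $3(a+2)(a-2)=a^2+2a-2$). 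Hence the lemma's output dominates the claimed bound on the whole interval $(2,3]$. This elementary identification of the displayed function with a minimum of two rational functions, together with locating its breakpoint, is the only step that is not completely mechanical.

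For the second assertion I would assume $P\notin\mathbf{e}_1\cup\mathbf{e}_2\cup\mathbf{e}_3$, so the target is the single bound $\frac{3(a+2)}{a^2+2a-2}$. If $P\in\mathbf{l}_3$, then necessarily $P\notin\mathbf{e}_3$ and Lemma~\ref{lemma:dP5-A1-L3} gives exactly this bound. If $P\in\mathbf{l}_4\setminus\mathbf{l}_3$, then necessarily $P\notin\mathbf{e}_1\cup\mathbf{e}_2$ and Lemma~\ref{lemma:dP5-A1-L4} gives exactly this bound; here I should note in passing that the pairwise intersection points $\mathbf{l}_3\cap\mathbf{l}_4$, $\mathbf{l}_3\cap\mathbf{e}_3$, $\mathbf{l}_4\cap\mathbf{e}_i$ read off the intersection table are all absorbed by the ordering of the cases. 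Finally, if $P$ lies on none of $\mathbf{e}_1,\mathbf{e}_2,\mathbf{e}_3,\mathbf{l}_1,\mathbf{l}_2,\mathbf{l}_3,\mathbf{l}_4$, Lemma~\ref{lemma:dP5-A1-general-point} gives $\delta_P(S,D)\geqslant\frac{2(a+2)}{a^2-2a+4}$ on $(2,5-\sqrt{5}]$ and $\delta_P(S,D)\geqslant\frac{2(2a+4)}{a^2+6a-12}$ on $[5-\sqrt{5},3]$, and it remains to check that both are at least $\frac{3(a+2)}{a^2+2a-2}$ on the relevant ranges; clearing denominators, these comparisons reduce to $a^2-10a+16\leqslant 0$ (valid for $a\in[2,8]$, hence on $(2,5-\sqrt{5}]$) and $a^2-10a+28\geqslant 0$ (valid for all real $a$, its discriminant being negative). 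Since $P\notin\mathbf{l}_1\cup\mathbf{l}_2$, these subcases are exhaustive, which finishes the argument.

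I do not anticipate a real obstacle here: the statement is a bookkeeping corollary glued to the five preceding lemmas. The only point deserving care will be the matching, in the $P\in\mathbf{e}_1\cup\mathbf{e}_2\cup\mathbf{e}_3$ branch, between what the lemmas deliver and what the corollary claims, namely recognizing the piecewise bound as $\min\bigl\{\frac{3(a+2)}{a^2+2a-2},\frac{1}{a-2}\bigr\}$ with breakpoint $\frac{1+\sqrt{21}}{2}$, and keeping the case list exhaustive so that no stratum --- in particular no intersection point of two negative curves --- is left uncovered.
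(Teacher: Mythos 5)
Your proposal is correct and is exactly the paper's argument: the corollary is stated there as the direct combination of Lemmas~\ref{lemma:dP5-A1-e3}, \ref{lemma:dP5-A1-e1-e2}, \ref{lemma:dP5-A1-L3}, \ref{lemma:dP5-A1-L4}, \ref{lemma:dP5-A1-general-point}, which is precisely your stratification by negative curves. Your added checks --- identifying the piecewise bound as $\min\bigl\{\tfrac{3(a+2)}{a^2+2a-2},\tfrac{1}{a-2}\bigr\}$ with breakpoint at the root of $a^2-a-5=0$, and the comparisons reducing to $a^2-10a+16\leqslant 0$ and $a^2-10a+28\geqslant 0$ --- are accurate and simply make explicit what the paper leaves to the reader.
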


\subsection{Quintic del Pezzo surface with singular point of type $\mathbb{A}_2$}
\label{subsection:dP5-A2}

Let $S$ be a del Pezzo surface such that $K_S^2=5$, and $S$ has one singular point,
which is a singular point of type $\mathbb{A}_1$. Set~$O=\mathrm{Sing}(S)$.
Then it follows from \cite{CorayTsfasman} that there exists a birational morphism $\pi\colon S\to\mathbb{P}^2$ that contracts
smooth irreducible rational curves $\mathbf{e}_1$ and $\mathbf{e}_2$ such that $\mathbf{e}_1\cap\mathbf{e}_2=\varnothing$,
$O\not\in\mathbf{e}_1$,  $O\in\mathbf{e}_2$, $\mathbf{e}_1^2=-1$, and $\mathbf{e}_3^2=-\frac{1}{3}$.
Set~$\mathbf{h}=\pi^*(\mathcal{O}_{\mathbb{P}^2}(1))$.
Let $\mathbf{l}_1$ and $\mathbf{l}_2$ be irreducible curves in $|\mathbf{h}-2\mathbf{e}_2|$ and $|\mathbf{h}-\mathbf{e}_1-\mathbf{e}_2|$, respectively.
Then $\mathbf{l}_1$ and $\mathbf{l}_2$ are smooth and rational, $O=\mathbf{l}_1\cap\mathbf{l}_2\cap\mathbf{e}_2$,
and $\mathbf{e}_1$, $\mathbf{e}_2$, $\mathbf{l}_1$, $\mathbf{l}_2$ are all curves in $S$ that have negative self-intersections.
They intersections are given in the following table:

\begin{center}
\renewcommand\arraystretch{1.5}
\begin{tabular}{|c||c|c|c|c|}
\hline
$\bullet$      & $\mathbf{e}_1$ & $\mathbf{e}_2$ & $\mathbf{l}_1$ & $\mathbf{l}_2$\\
\hline\hline
$\mathbf{e}_1$ &    $-1$        &     $0$        &      $0$       &    $1$      \\
\hline
$\mathbf{e}_2$ &     $0$        &  $-\frac{1}{3}$&  $\frac{2}{3}$ &    $\frac{1}{3}$ \\
\hline
$\mathbf{l}_1$ &     $0$        &  $\frac{2}{3}$ &   $-\frac{1}{3}$&  $\frac{1}{3}$   \\
\hline
$\mathbf{l}_2$ &     $1$        & $\frac{1}{3}$  &   $\frac{1}{3}$& -$\frac{1}{3}$  \\
\hline
\end{tabular}
\end{center}

Set $D=a\mathbf{h}-\mathbf{e}_1-3\mathbf{e}_2$ for $a\in(2,3]$. Then $D$ is ample and $D^2=a^2-4$.

\begin{lemma}
\label{lemma:dP5-A2-e2}
Let $P$ be a point in $\mathbf{e}_2$ such that $P\ne O$. Then
$$
\delta_P\big(S,D\big)\geqslant
\left\{\aligned
&\frac{6(a+2)}{a^2+2a+4} \ \text{ if } 2<a\leqslant \frac{1+\sqrt{17}}{2}, \\
&\frac{6(a+2)}{(7a+10)(a-2)}\ \text{ if } \frac{1+\sqrt{17}}{2}\leqslant a\leqslant 3.
\endaligned
\right.
$$
\end{lemma}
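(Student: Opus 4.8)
The plan is to apply \eqref{equation:AZ-surface} with the curve $C=\mathbf{e}_2$, which is the only curve of negative self-intersection through $P$: indeed $\mathbf{e}_1\cap\mathbf{e}_2=\varnothing$, while $\mathbf{l}_1$ and $\mathbf{l}_2$ meet $\mathbf{e}_2$ only at the point $O\ne P$. First I would work out the Zariski decomposition of $D-vC=a\mathbf{h}-\mathbf{e}_1-(3+v)\mathbf{e}_2$. Among the negative curves, only $\mathbf{l}_1\in|\mathbf{h}-2\mathbf{e}_2|$ obstructs nefness, and $(D-vC)\cdot\mathbf{l}_1=a-2-\tfrac{2v}{3}$, which vanishes at $v=\tfrac{3(a-2)}{2}$. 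Hence $N(v)=0$ and $P(v)=D-vC$ on $[0,\tfrac{3(a-2)}{2}]$, and peeling off $\mathbf{l}_1$ gives $N(v)=(2v-3(a-2))\mathbf{l}_1$ and $P(v)=(4a-6-2v)\mathbf{h}-\mathbf{e}_1-(6a-9-3v)\mathbf{e}_2$ for $v\geqslant\tfrac{3(a-2)}{2}$. At $v=2(a-2)$ one obtains $P(v)=2\mathbf{h}-\mathbf{e}_1-3\mathbf{e}_2$ with $P(v)^2=0$; since $\{\mathbf{l}_1,\mathbf{l}_2\}$ is not negative-definite one cannot continue to subtract curves, and a direct check that $a\mathbf{h}-\mathbf{e}_1-(2a-1+t)\mathbf{e}_2$ is not pseudo-effective for $t>0$ shows that $\tau=2(a-2)$.

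Next I would record, on $[0,\tfrac{3(a-2)}{2}]$, the values $P(v)^2=a^2-1-\tfrac{(3+v)^2}{3}$ and $P(v)\cdot C=\tfrac{3+v}{3}$, and on $[\tfrac{3(a-2)}{2},2(a-2)]$, writing $b=4a-6-2v$, the values $P(v)^2=\tfrac{b^2}{4}-1$ and $P(v)\cdot C=\tfrac{b}{2}$. The key simplification is that $\mathrm{Supp}\,N(v)\subset\mathbf{l}_1$ meets $C=\mathbf{e}_2$ only at $O\ne P$, so $\mathrm{ord}_P(N(v)\vert_C)=0$ and the first summand of $S(W^C_{\bullet,\bullet};P)$ vanishes. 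Integrating then gives $S_D(C)=\tfrac{1}{a^2-4}\int_0^{\tau}P(v)^2\,dv=\tfrac{(7a+10)(a-2)}{6(a+2)}$ and $S(W^C_{\bullet,\bullet};P)=\tfrac{1}{a^2-4}\int_0^{\tau}(P(v)\cdot C)^2\,dv=\tfrac{a^2+2a+4}{6(a+2)}$.

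To finish, plug these into \eqref{equation:AZ-surface}, so that $\delta_P(S,D)\geqslant\min\{1/S_D(C),\,1/S(W^C_{\bullet,\bullet};P)\}$. One has $S_D(C)\geqslant S(W^C_{\bullet,\bullet};P)$ exactly when $(7a+10)(a-2)\geqslant a^2+2a+4$, i.e. $6a^2-6a-24\geqslant 0$, i.e. $a\geqslant\tfrac{1+\sqrt{17}}{2}$, and this dichotomy produces the two branches in the statement. The main obstacle is the first step: correctly identifying the two Zariski chambers and, above all, justifying that the pseudo-effective threshold is exactly $\tau=2(a-2)$ and not larger; once that is settled, the remaining work is routine integration and a comparison of two rational functions of $a$.
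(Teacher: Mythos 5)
Your proposal is correct and follows essentially the same route as the paper's proof: the same choice $C=\mathbf{e}_2$, the same Zariski decomposition with breakpoint $v=\tfrac{3(a-2)}{2}$, negative part supported on $\mathbf{l}_1$, threshold $\tau=2a-4$, and the same values $S_D(C)=\tfrac{(7a+10)(a-2)}{6(a+2)}$ and $S(W^C_{\bullet,\bullet};P)=\tfrac{a^2+2a+4}{6(a+2)}$ (using $P\notin\mathbf{l}_1$), combined via \eqref{equation:AZ-surface}. The only difference is cosmetic: you additionally justify the value of $\tau$, which the paper asserts without comment.
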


\begin{proof}
Set $C=\mathbf{e}_2$. Then $\tau=2a-4$. Moreover, we have
$$
P(v)\sim_{\mathbb{R}} \left\{\aligned
&a\mathbf{h}-\mathbf{e}_1-(3+v)\mathbf{e}_2 \ \text{ if } 0\leqslant v\leqslant \frac{3a-6}{2}, \\
&(4a-6-2v)\mathbf{h}-\mathbf{e}_1+(3v+9-6a)\mathbf{e}_2\ \text{ if } \frac{3a-6}{2}\leqslant v\leqslant 2a-4,
\endaligned
\right.
$$
and
$$
N(v)= \left\{\aligned
&0\ \text{ if } 0\leqslant v\leqslant \frac{3a-6}{2}, \\
&(6-3a+2v)\mathbf{l}_{1}\ \text{ if } \frac{3a-6}{2}\leqslant v\leqslant 2a-4,
\endaligned
\right.
$$
which gives
$$
P(v)^2=\left\{\aligned
&\frac{3a^2-v^2-6v-12}{3} \ \text{ if } 0\leqslant v\leqslant \frac{3a-6}{2}, \\
&(2a-v-2)(2a-v-4)\ \text{ if } \frac{3a-6}{2}\leqslant v\leqslant 2a-4,
\endaligned
\right.
$$
and
$$
P(v)\cdot C=\left\{\aligned
&\frac{3+v}{3}\ \text{ if } 0\leqslant v\leqslant \frac{3a-6}{2}, \\
&2a-3-v\ \text{ if } \frac{3a-6}{2}\leqslant v\leqslant 2a-4.
\endaligned
\right.
$$
Integrating, we get $S_D(C)=\frac{(7a+10)(a-2)}{6(a+2)}$.
Similarly, we compute $S(W^C_{\bullet,\bullet};P)=\frac{a^2+2a+4}{6(a+2)}$, since $P\not\in\mathbf{l}_{1}$.
Now, using \eqref{equation:AZ-surface}, we obtain the required inequality.
\end{proof}

\begin{lemma}
\label{lemma:dP5-A2-e1-e2}
Let $P$ be a point in $\mathbf{e}_1$ such that $P\not\in\mathbf{l}_2$. Then
$$
\delta_P\big(S,D\big)\geqslant
\left\{\aligned
&\frac{3(a+2)}{2a^2-5a+8} \ \text{ if } 2<a\leqslant \frac{13+\sqrt{57}}{2}, \\
&\frac{3(a+2)}{(a+10)(a-2)}\ \text{ if } \frac{13+\sqrt{57}}{2}\leqslant a\leqslant 3.
\endaligned
\right.
$$
\end{lemma}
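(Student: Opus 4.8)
The plan is to apply \eqref{equation:AZ-surface} with $C=\mathbf{e}_1$. This choice is essentially forced: among the negative curves $\mathbf{e}_1,\mathbf{e}_2,\mathbf{l}_1,\mathbf{l}_2$ only $\mathbf{e}_1$ and $\mathbf{l}_2$ meet $\mathbf{e}_1$, and $P\not\in\mathbf{l}_2$ by hypothesis, so $\mathbf{e}_1$ is the only negative curve through $P$.

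First I would compute the Zariski decomposition of $D-v\mathbf{e}_1=a\mathbf{h}-(1+v)\mathbf{e}_1-3\mathbf{e}_2$ for $v\geqslant 0$. Pairing with the four negative curves and using the intersection table, one sees that $D-v\mathbf{e}_1$ is nef exactly for $v\in[0,a-2]$, where $P(v)=D-v\mathbf{e}_1$ and $N(v)=0$, and that for $v>a-2$ only the pairing with $\mathbf{l}_2$ turns negative, so that
$$
N(v)=3(v-a+2)\mathbf{l}_2,\qquad P(v)=(4a-3v-6)\mathbf{h}-(3a-2v-5)\mathbf{e}_1-(3a-3v-3)\mathbf{e}_2,
$$
and this $P(v)$ stays nef on $[a-2,2a-4]$. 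To pin down $\tau$ I would pair $D-v\mathbf{e}_1$ with the nef class $2\mathbf{h}-\mathbf{e}_1-3\mathbf{e}_2$, whose value is $2a-4-v$; combined with the single relation $\mathbf{l}_1+\mathbf{e}_2\sim\mathbf{l}_2+\mathbf{e}_1$ in $\mathrm{Pic}(S)$ this yields $\tau=2a-4$, with $D-(2a-4)\mathbf{e}_1=(3-a)\mathbf{l}_1+(2a-3)\mathbf{l}_2$ on the face $\langle\mathbf{l}_1,\mathbf{l}_2\rangle$ of the pseudoeffective cone; this agrees with $P(2a-4)^2=0$.

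From this decomposition one reads off $P(v)^2=a^2-v^2-2v-4$, $P(v)\cdot\mathbf{e}_1=1+v$ on $[0,a-2]$ and $P(v)^2=2(2a-v-4)(a-v-1)$, $P(v)\cdot\mathbf{e}_1=3a-2v-5$ on $[a-2,2a-4]$ — exactly the numerical data occurring in the proof of Lemma~\ref{lemma:dP5-e1-e2-e3-e4}, with $\mathbf{l}_2$ now in the role of the three $(-1)$-curves $\mathbf{l}_{1j}$ there. Integrating, $S_D(\mathbf{e}_1)=\frac{(a-2)(a+10)}{3(a+2)}$. In $S(W^{\mathbf{e}_1}_{\bullet,\bullet};P)$ the order-of-vanishing term vanishes: $\mathbf{e}_1$ is disjoint from both $\mathbf{e}_2$ and $\mathbf{l}_1$, and $P\not\in\mathbf{l}_2$, so $\mathrm{ord}_P(N(v)\vert_{\mathbf{e}_1})=0$ for every $v$; hence $S(W^{\mathbf{e}_1}_{\bullet,\bullet};P)=\frac{1}{a^2-4}\int_0^{2a-4}(P(v)\cdot\mathbf{e}_1)^2\,dv=\frac{2a^2-5a+8}{3(a+2)}$. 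Now \eqref{equation:AZ-surface} gives
$$
\delta_P(S,D)\geqslant\min\left\{\frac{3(a+2)}{(a-2)(a+10)},\ \frac{3(a+2)}{2a^2-5a+8}\right\},
$$
and comparing the two denominators, which differ by $a^2-13a+28$, produces the two-case bound in the statement.

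The one step that needs genuine care is the Zariski decomposition: one must check that the negative part is carried by $\mathbf{l}_2$ alone — not by the whole chain $\mathbf{l}_1+\mathbf{l}_2+\mathbf{e}_2$ through the singular point $O=\mathrm{Sing}(S)$ — and that the pseudoeffective threshold is $\tau=2a-4$. Both follow from the quadrilateral shape of $\overline{\mathrm{NE}}(S)$ forced by the relation $\mathbf{l}_1+\mathbf{e}_2\sim\mathbf{l}_2+\mathbf{e}_1$. Everything after that is the same bookkeeping as in the proofs of Lemmas~\ref{lemma:dP5-e1-e2-e3-e4} and~\ref{lemma:dP5-A2-e2}.
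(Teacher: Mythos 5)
Your proposal is correct and follows essentially the same route as the paper's proof: the flag $C=\mathbf{e}_1$, the Zariski decomposition with $N(v)=3(v+2-a)\mathbf{l}_2$ on $[a-2,2a-4]$ and $\tau=2a-4$, and the resulting values $S_D(\mathbf{e}_1)=\frac{(a+10)(a-2)}{3(a+2)}$ and $S\big(W^{\mathbf{e}_1}_{\bullet,\bullet};P\big)=\frac{2a^2-5a+8}{3(a+2)}$ fed into \eqref{equation:AZ-surface} are exactly those in the paper, with your extra verification of nefness and of the pseudoeffective threshold being the only (welcome) additions. Note only that the minimum you obtain switches at $a=\frac{13-\sqrt{57}}{2}$, so the ``$+$'' in the stated threshold $\frac{13+\sqrt{57}}{2}$ appears to be a typo in the lemma, not a defect of your argument.
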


\begin{proof}
Set $C=\mathbf{e}_{1}$. Then $\tau=2a-4$. Moreover, we have
$$
P(v)\sim_{\mathbb{R}} \left\{\aligned
&a\mathbf{h}-(1+v)\mathbf{e}_1-3\mathbf{e}_2  \ \text{ if } 0\leqslant v\leqslant a-2, \\
&(4a-3v-6)\mathbf{h}+(2v+5-3a)\mathbf{e}_1+3(1-a+v)\mathbf{e}_2\ \text{ if } a-2\leqslant v\leqslant 2a-4,
\endaligned
\right.
$$
and
$$
N(v)= \left\{\aligned
&0\ \text{ if } 0\leqslant v\leqslant a-2, \\
&3(v-a+2)\mathbf{l}_{2}\ \text{ if } a-2\leqslant v\leqslant 2a-4,
\endaligned
\right.
$$
which gives
$$
P(v)^2=\left\{\aligned
&a^2-v^2-2v-4 \ \text{ if } 0\leqslant v\leqslant a-2, \\
&2(2a-v-4)(a-v-1)\ \text{ if } a-2\leqslant v\leqslant 2a-4,
\endaligned
\right.
$$
and
$$
P(v)\cdot C=\left\{\aligned
&1+v\ \text{ if } 0\leqslant v\leqslant a-2, \\
&3a-5-2v\ \text{ if } a-2\leqslant v\leqslant 2a-4.
\endaligned
\right.
$$
This gives $S_D(C)=\frac{(a+10)(a-2)}{3(a+2)}$ and
$S(W^C_{\bullet,\bullet};P)=\frac{2a^2-5a+8}{3(a+2)}$, so \eqref{equation:AZ-surface} implies the required assertion.
\end{proof}

\begin{lemma}
\label{lemma:dP5-A2-general-point}
Suppose that $P\not\in\mathbf{e}_1\cup\mathbf{e}_2\cup\mathbf{l}_{1}\cup\mathbf{l}_{2}$.
Then
$$
\delta_P\big(S,D\big)\geqslant
\left\{\aligned
&\frac{2(a+2)}{a^2-2a+4} \ \text{ if } 2<a\leqslant 5-\sqrt{5}, \\
&\frac{2(2a+4)}{a^2+6a-12}\ \text{ if }  5-\sqrt{5}\leqslant a\leqslant \frac{19-\sqrt{21}}{5},\\
&\frac{6(a+2)}{(a-2)(26-a)}\ \text{ if }  \frac{19-\sqrt{21}}{5}\leqslant a\leqslant 3.
\endaligned
\right.
$$
\end{lemma}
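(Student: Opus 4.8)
The plan is to run exactly the argument used in the proofs of Lemmas~\ref{lemma:dP5-general-point} and \ref{lemma:dP5-A1-general-point}, now in the $\mathbb{A}_2$ setting. Recall that $f\colon\widetilde{S}\to S$ is the blow up of $S$ at the general point $P$, with $f$-exceptional curve $E$, and set $\widetilde{\mathbf{h}}=f^*(\mathbf{h})$. Since $P$ is a general, hence smooth, point of $S$, the surface $\widetilde{S}$ is again a del Pezzo surface with a single Du Val singular point of type $\mathbb{A}_2$, now of degree $4$. The first step is to use \cite{CorayTsfasman} to list all curves on $\widetilde{S}$ with negative self-intersection: the strict transforms $\widetilde{\mathbf{e}}_1$, $\widetilde{\mathbf{e}}_2$, $\widetilde{\mathbf{l}}_1$, $\widetilde{\mathbf{l}}_2$ of $\mathbf{e}_1$, $\mathbf{e}_2$, $\mathbf{l}_1$, $\mathbf{l}_2$, the curve $E$, and finitely many new curves $\mathbf{c}_0,\mathbf{c}_1,\dots$ arising as strict transforms of lines and conics in $\mathbb{P}^2$ through $\pi(P)$ (for instance $\mathbf{c}_1\in|\widetilde{\mathbf{h}}-E-\widetilde{\mathbf{e}}_1|$ and a conic-type curve $\mathbf{c}_0\in|2\widetilde{\mathbf{h}}-E-\widetilde{\mathbf{e}}_1-3\widetilde{\mathbf{e}}_2|$), the precise list being read off the $\mathbb{A}_2$ weak del Pezzo configuration exactly as in the proof of Lemma~\ref{lemma:dP5-A1-general-point}.

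Next I would compute $\widetilde{\tau}=\sup\{v\in\mathbb{R}_{\geqslant 0}\mid f^*(D)-vE\text{ is pseudo-effective}\}$ together with the Zariski decomposition $f^*(D)-vE=\widetilde{P}(v)+\widetilde{N}(v)$ on each subinterval of $[0,\widetilde{\tau}]$ on which $\mathrm{Supp}(\widetilde{N}(v))$ is constant. As in the two previous proofs the negative part successively picks up $\mathbf{c}_0$ and then the remaining $\mathbf{c}_i$; the $\mathbb{A}_2$ singularity (which forces the multiplicity $3$ on $\mathbf{e}_2$ and gives fractional intersection numbers) introduces one extra wall compared with the smooth case, and this extra wall is precisely what produces the third interval in the statement. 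From the decomposition I read off $\widetilde{P}(v)^2$ and $\widetilde{P}(v)\cdot E$ on each piece and integrate to obtain $S_D(E)$, which I expect to be again $\frac{a^2+6a-12}{2(a+2)}$, so that $\frac{2}{S_D(E)}=\frac{2(2a+4)}{a^2+6a-12}$.

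Then, for each point $O\in E$, I would evaluate
$$
S\big(W^E_{\bullet,\bullet};O\big)=\frac{2}{a^2-4}\int\limits_0^{\widetilde{\tau}}\mathrm{ord}_O\big(\widetilde{N}(v)\vert_{E}\big)\big(\widetilde{P}(v)\cdot E\big)\,dv+\frac{1}{a^2-4}\int\limits_0^{\widetilde{\tau}}\big(\widetilde{P}(v)\cdot E\big)^2\,dv,
$$
distinguishing the cases $O\notin\bigcup_i\mathbf{c}_i$, $O\in\mathbf{c}_0$, and $O$ lying on each of the other $\mathbf{c}_i$. I expect one case to contribute $\frac{a^2-2a+4}{2(a+2)}$ and another (the one coming from the $\mathbf{c}_i$ that meets $\widetilde{\mathbf{e}}_2$, and hence enters $\widetilde{N}(v)$ with multiplicity $3$) to contribute $\frac{(a-2)(26-a)}{6(a+2)}$, with the remaining cases giving larger values. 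Feeding these into \eqref{equation:AZ-surface-blow-up} yields
$$
\delta_P(S,D)\geqslant\min\Bigg\{\frac{2(2a+4)}{a^2+6a-12},\ \frac{2(a+2)}{a^2-2a+4},\ \frac{6(a+2)}{(a-2)(26-a)}\Bigg\},
$$
and a direct comparison of these three rational functions on $(2,3]$ gives the claimed piecewise bound: the first dominates up to $a=5-\sqrt5$, the second on $[5-\sqrt5,\frac{19-\sqrt{21}}{5}]$, and the third on $[\frac{19-\sqrt{21}}{5},3]$ (these crossover values being exactly the roots of $a^2-10a+20$ and $5a^2-38a+68$).

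The step I expect to be the main obstacle is the bookkeeping of the Zariski chamber decomposition of $f^*(D)-vE$ on $\widetilde{S}$: because of the $\mathbb{A}_2$ point the relevant self-intersections are fractional and there is one more wall to cross than in the smooth or $\mathbb{A}_1$ cases, so one must order the walls correctly and identify, at each stage, exactly which of $\mathbf{c}_0,\mathbf{c}_1,\dots$ enters the negative part and with what coefficient. Once this is pinned down, the remaining integrations and the final comparison of rational functions are routine and entirely parallel to Lemmas~\ref{lemma:dP5-general-point} and \ref{lemma:dP5-A1-general-point}.
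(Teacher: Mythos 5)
Your proposal follows the paper's proof essentially verbatim: blow up $P$, identify the negative curves $\mathbf{c}_0\in|2\widetilde{\mathbf{h}}-E-\widetilde{\mathbf{e}}_1-3\widetilde{\mathbf{e}}_2|$, $\mathbf{c}_1\in|\widetilde{\mathbf{h}}-E-\widetilde{\mathbf{e}}_1|$, $\mathbf{c}_2\in|\widetilde{\mathbf{h}}-E-\widetilde{\mathbf{e}}_2|$, compute $S_D(E)=\frac{a^2+6a-12}{2(a+2)}$ and the values $\frac{a^2-2a+4}{2(a+2)}$, $\frac{(a-2)(10-a)}{2(a+2)}$, $\frac{(a-2)(26-a)}{6(a+2)}$ of $S\big(W^E_{\bullet,\bullet};O\big)$, and conclude via \eqref{equation:AZ-surface-blow-up} with exactly the minimum and crossover values you state. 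One small correction to your expectations: there is no extra wall in the $\mathbb{A}_2$ case --- the chambers are again $[0,2a-4]$, $[2a-4,a-1]$, $[a-1,\tfrac{3a-4}{2}]$ as in Lemmas~\ref{lemma:dP5-general-point} and \ref{lemma:dP5-A1-general-point}; the third interval of the bound comes, as you correctly say later, from $\mathbf{c}_2$ entering $\widetilde{N}(v)$ with coefficient $3$, not from an additional wall.
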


\begin{proof}
Let $\widetilde{\mathbf{e}}_1$, $\widetilde{\mathbf{e}}_2$, $\widetilde{\mathbf{l}}_{1}$ and $\widetilde{\mathbf{l}}_{2}$,
be the strict transforms on $\widetilde{S}$ of the curves $\mathbf{e}_1$, $\mathbf{e}_2$, $\mathbf{l}_{1}$ and $\mathbf{l}_{2}$, respectively.
Set $\widetilde{\mathbf{h}}=f^*(\mathbf{h})$.
Let $\mathbf{c}_0$, $\mathbf{c}_1$, $\mathbf{c}_2$
be the curves in $|2\widetilde{\mathbf{h}}-E-\widetilde{\mathbf{e}}_1-3\widetilde{\mathbf{e}}_2|$,
$|\widetilde{\mathbf{h}}-E-\widetilde{\mathbf{e}}_1|$, $|\widetilde{\mathbf{h}}-E-\widetilde{\mathbf{e}}_2|$,~respectively.
Then
$\mathbf{c}_0$, $\mathbf{c}_1$, $\mathbf{c}_2$, $\widetilde{\mathbf{e}}_1$, $\widetilde{\mathbf{e}}_2$, $\widetilde{\mathbf{l}}_{1}$, $\widetilde{\mathbf{l}}_{2}$, $E$
are all curves in $\widetilde{S}$ that have negative self-intersections \cite{CorayTsfasman}.

We compute $\widetilde{\tau}=\frac{3a-4}{2}$. Similarly, we see that
$$
\widetilde{P}(v)\sim_{\mathbb{R}} \left\{\aligned
&a\widetilde{\mathbf{h}}-\widetilde{\mathbf{e}}_1-3\widetilde{\mathbf{e}}_2-vE \ \text{ if } 0\leqslant v\leqslant 2a-4, \\
&(5a-2v-8)\widetilde{\mathbf{h}}+(3-2a+v)\big(\widetilde{\mathbf{e}}_1+3\widetilde{\mathbf{e}}_2\big)+(4-2a)E\ \text{ if } 2a-4\leqslant v\leqslant a-1, \\
&(3a-2v-4)\big(3\widetilde{\mathbf{h}}-\widetilde{\mathbf{e}}_1-3\widetilde{\mathbf{e}}_2-2E\big) \ \text{ if } a-1\leqslant v\leqslant \frac{3a-4}{2},
\endaligned
\right.
$$
and
$$
\widetilde{N}(v)=\left\{\aligned
&0\ \text{ if } 0\leqslant v\leqslant 2a-4, \\
&(v+4-2a)\mathbf{c}_0 \ \text{ if } 2a-4\leqslant v\leqslant a-1, \\
&(v+4-2a)\mathbf{c}_0+(v+1-a)\big(\mathbf{c}_1+3\mathbf{c}_2\big) \ \text{ if } a-1\leqslant v\leqslant \frac{3a-4}{2}.
\endaligned
\right.
$$
This gives
$$
\widetilde{P}(v)^2=\left\{\aligned
&a^2-v^2-4\ \text{ if } 0\leqslant v\leqslant 2a-4, \\
&(a-2)(5a-4v-6) \ \text{ if } 2a-4\leqslant v\leqslant a-1, \\
&(3a-2v-4)^2 \ \text{ if } a-1\leqslant v\leqslant \frac{3a-4}{2},
\endaligned
\right.
$$
and
$$
\widetilde{P}(v)\cdot E=\left\{\aligned
&v \ \text{ if } 0\leqslant v\leqslant 2a-4, \\
&2a-4 \ \text{ if } 2a-4\leqslant v\leqslant a-1, \\
&6a-4v-8\ \text{ if } a-1\leqslant v\leqslant \frac{3a-4}{2}.
\endaligned
\right.
$$
Now, integrating, we get $S_D(E)=\frac{a^2+6a-12}{2(a+2)}$.

Let $O$ be a point in $E$. Then
$$
S\big(W^E_{\bullet,\bullet};O\big)=
\frac{2}{a^2-4}\int\limits_{2a-4}^{\frac{3a-4}{2}}\mathrm{ord}_O\big(\widetilde{N}(v)\vert_{E}\big)\big(\widetilde{P}(v)\cdot E\big)dv+\frac{2(8-a)(a-2)}{3(a+2)}.
$$
Thus, if $O\not\in\mathbf{c}_0\cup\mathbf{c}_1\cup\mathbf{c}_2$, then $S(W^E_{\bullet,\bullet};O)=\frac{2(8-a)(a-2)}{3(a+2)}$.
Similarly, if $O\in\mathbf{c}_0$, then
$$
S\big(W^E_{\bullet,\bullet};O\big)=
\frac{2}{a^2-4}\int\limits_{2a-4}^{\frac{3a-4}{2}}(v+4-2a)\big(\widetilde{P}(v)\cdot E\big)dv+\frac{2(8-a)(a-2)}{3(a+2)}=\frac{a^2-2a+4}{2(a+2)}.
$$
Likewise, if $O\in\mathbf{c}_1$, then
$$
S\big(W^E_{\bullet,\bullet};O\big)=
\frac{2}{a^2-4}\int\limits_{a-1}^{\frac{3a-4}{2}}(v + 1 - a)\big(\widetilde{P}(v)\cdot E\big)dv+\frac{2(8-a)(a-2)}{3(a+2)}=\frac{(a-2)(10-a)}{2(a+2)}.
$$
Finally, if $O\in\mathbf{c}_2$, then
$$
S\big(W^E_{\bullet,\bullet};O\big)=
\frac{2}{a^2-4}\int\limits_{a-1}^{\frac{3a-4}{2}}3(v+1-a)\big(\widetilde{P}(v)\cdot E\big)dv+\frac{2(8-a)(a-2)}{3(a+2)}=\frac{(a-2)(26-a)}{6(a+2)}.
$$
Therefore, using \eqref{equation:AZ-surface-blow-up}, we get
$$
\delta_P(S,D)\geqslant\min\Bigg\{\frac{4(a+2)}{a^2+6a-12},\frac{2(a+2)}{a^2-2a+4},\frac{2(a+2)}{(a-2)(10-a)},\frac{6(a+2)}{(a-2)(26-a)}\Bigg\},
$$
which implies the required assertion.
\end{proof}

Combining Lemmas~\ref{lemma:dP5-A2-e2}, \ref{lemma:dP5-A2-e1-e2}, \ref{lemma:dP5-A2-general-point}, we obtain

\begin{corollary}
\label{corollary:dP5-A2}
Let $P$ be a point in $S$ such that $P\not\in\mathbf{l}_1\cup\mathbf{l}_2$. If $P\in\mathbf{e}_1\cup\mathbf{e}_2$, then
$$
\delta_P\big(S,D\big)\geqslant
\left\{\aligned
&\frac{6(a+2)}{a^2+2a+4} \ \text{ if } 2<a\leqslant \frac{1+\sqrt{17}}{2}, \\
&\frac{6(a+2)}{(7a+10)(a-2)}\ \text{ if } \frac{1+\sqrt{17}}{2}\leqslant a\leqslant 3.
\endaligned
\right.
$$
If $P\not\in\mathbf{e}_1\cup\mathbf{e}_2$, then
$$
\delta_P\big(S,D\big)\geqslant
\left\{\aligned
&\frac{2(a+2)}{a^2-2a+4} \ \text{ if } 2<a\leqslant 5-\sqrt{5}, \\
&\frac{2(2a+4)}{a^2+6a-12}\ \text{ if }  5-\sqrt{5}\leqslant a\leqslant \frac{19-\sqrt{21}}{5},\\
&\frac{6(a+2)}{(a-2)(26-a)}\ \text{ if }  \frac{19-\sqrt{21}}{5}\leqslant a\leqslant 3.
\endaligned
\right.
$$
\end{corollary}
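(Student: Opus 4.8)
The plan is to run the evident case analysis on where $P$ lies among the finitely many negative curves $\mathbf{e}_1,\mathbf{e}_2,\mathbf{l}_1,\mathbf{l}_2$ of $S$, quoting in each case one of the three preceding lemmas. Since $O=\mathbf{l}_1\cap\mathbf{l}_2\cap\mathbf{e}_2$, in particular $O\in\mathbf{l}_1$, so the standing assumption $P\notin\mathbf{l}_1\cup\mathbf{l}_2$ automatically gives $P\ne O$ and keeps $P$ off $\mathbf{l}_1$ and $\mathbf{l}_2$. Hence only three mutually exclusive situations remain: $P\in\mathbf{e}_2$ with $P\ne O$; $P\in\mathbf{e}_1$ with $P\notin\mathbf{l}_2$; and $P\in S\setminus(\mathbf{e}_1\cup\mathbf{e}_2\cup\mathbf{l}_1\cup\mathbf{l}_2)$. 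These are exactly the hypotheses of Lemmas~\ref{lemma:dP5-A2-e2}, \ref{lemma:dP5-A2-e1-e2} and \ref{lemma:dP5-A2-general-point}, respectively.

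When $P\notin\mathbf{e}_1\cup\mathbf{e}_2$ there is nothing left to prove: Lemma~\ref{lemma:dP5-A2-general-point} is verbatim the three--branch estimate in the second half of the statement, and the breakpoints $5-\sqrt{5}$ and $\tfrac{19-\sqrt{21}}{5}$ are already where they should be. When $P\in\mathbf{e}_1\cup\mathbf{e}_2$, the asserted bound is the one Lemma~\ref{lemma:dP5-A2-e2} produces for $P\in\mathbf{e}_2$, so what remains is to take the \emph{minimum} of the estimates coming from Lemmas~\ref{lemma:dP5-A2-e2} and \ref{lemma:dP5-A2-e1-e2} and match it with the displayed two--branch function. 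Concretely, one compares $\tfrac{3(a+2)}{2a^2-5a+8}$ (from the case $P\in\mathbf{e}_1$) with $\tfrac{6(a+2)}{a^2+2a+4}$ and with $\tfrac{6(a+2)}{(7a+10)(a-2)}$ (the two pieces from $P\in\mathbf{e}_2$); after clearing the positive denominators these become elementary polynomial inequalities in $a$, and the threshold value $\tfrac{1+\sqrt{17}}{2}$ is forced on us as the unique root in $(2,3]$ of $a^2+2a+4=(7a+10)(a-2)$, i.e.\ of $a^2-a-4=0$ --- which is precisely why the two branches of the $\mathbf{e}_2$--estimate (hence of the conclusion) are glued there. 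Assembling the three cases, i.e.\ taking for each $a\in(2,3]$ the smallest of the lower bounds obtained, yields the piecewise function in the statement.

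The hard part will be the book--keeping for $P\in\mathbf{e}_1$: the bound $\tfrac{3(a+2)}{2a^2-5a+8}$ of Lemma~\ref{lemma:dP5-A2-e1-e2} is rather crude near $a=2$, so one has to check carefully whether it really dominates the $\mathbf{e}_2$--estimate throughout $(2,3]$. If it does, the first branch stands as written; if it falls short on some subinterval, the clean fix is to record the honest infimum there --- namely $\min\{\tfrac{6(a+2)}{a^2+2a+4},\tfrac{3(a+2)}{2a^2-5a+8}\}$ to the left of $\tfrac{1+\sqrt{17}}{2}$ and $\min\{\tfrac{6(a+2)}{(7a+10)(a-2)},\tfrac{3(a+2)}{2a^2-5a+8}\}$ to its right, re-gluing at the corresponding roots --- or else to sharpen the $\mathbf{e}_1$--estimate by blowing $S$ up at $P$ and applying \eqref{equation:AZ-surface-blow-up} on the resulting del Pezzo surface of degree $4$, exactly as was done for a general point in Lemma~\ref{lemma:dP5-A2-general-point}. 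In every case no technique beyond those of Lemmas~\ref{lemma:dP5-A2-e2}--\ref{lemma:dP5-A2-general-point} is needed; the remaining content is purely the arithmetic of comparing these rational functions.
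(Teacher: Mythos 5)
Your case division is exactly the paper's implicit argument: since $O=\mathbf{l}_1\cap\mathbf{l}_2\cap\mathbf{e}_2$ and $\mathbf{e}_1\cap\mathbf{e}_2=\varnothing$, the hypothesis $P\notin\mathbf{l}_1\cup\mathbf{l}_2$ leaves precisely the three situations covered by Lemmas~\ref{lemma:dP5-A2-e2}, \ref{lemma:dP5-A2-e1-e2} and \ref{lemma:dP5-A2-general-point}, and the second display of the corollary is verbatim Lemma~\ref{lemma:dP5-A2-general-point}. The gap is the point you yourself flag and then leave open: whether the $\mathbf{e}_1$-estimate $\tfrac{3(a+2)}{2a^2-5a+8}$ of Lemma~\ref{lemma:dP5-A2-e1-e2} dominates the displayed bound for $P\in\mathbf{e}_1\cup\mathbf{e}_2$. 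It does not. All denominators are positive on $(2,3]$, and clearing them shows that $\tfrac{3(a+2)}{2a^2-5a+8}\geqslant\tfrac{6(a+2)}{a^2+2a+4}$ is equivalent to $0\geqslant 3(a-2)^2$, which fails for every $a\in(2,3]$, while $\tfrac{3(a+2)}{2a^2-5a+8}\geqslant\tfrac{6(a+2)}{(7a+10)(a-2)}$ is equivalent to $a^2+2a-12\geqslant 0$, i.e.\ $a\geqslant\sqrt{13}-1\approx 2.606>\tfrac{1+\sqrt{17}}{2}$. Hence for $P\in\mathbf{e}_1$ and $2<a<\sqrt{13}-1$ the plain combination of the three lemmas yields only $\min\bigl\{\tfrac{6(a+2)}{a^2+2a+4},\tfrac{3(a+2)}{2a^2-5a+8}\bigr\}$, which is strictly smaller than the first display of Corollary~\ref{corollary:dP5-A2}. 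Since your proposal explicitly defers this comparison (``if it does\dots, if it falls short\dots''), it does not prove the statement as written; completing it along your own suggested lines means either weakening the first branch to the honest minimum (and then rechecking the downstream use in Proposition~\ref{proposition:dP5-A2} and Corollary~\ref{corollary:dP5-singular}) or establishing a sharper bound on $\mathbf{e}_1$, e.g.\ by the blow-up method of Lemma~\ref{lemma:dP5-A2-general-point} -- work you have not carried out.

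To be fair, the paper's own justification is the same one-line ``combining the lemmas'', so the defect you sensed is present in the source as well: the first display of the corollary is exactly the $\mathbf{e}_2$-bound of Lemma~\ref{lemma:dP5-A2-e2} and is not justified for points of $\mathbf{e}_1$ by Lemma~\ref{lemma:dP5-A2-e1-e2} as stated (note also that the breakpoint $\tfrac{13+\sqrt{57}}{2}$ there exceeds $3$ and is presumably a misprint for $\tfrac{13-\sqrt{57}}{2}$). Your remaining arithmetic is fine -- in particular $\tfrac{1+\sqrt{17}}{2}$ is indeed the root in $(2,3]$ of $a^2-a-4=0$, where the two pieces of the $\mathbf{e}_2$-estimate cross -- but the decisive $\mathbf{e}_1$-comparison is the whole content of the corollary beyond a restatement of the lemmas, and it is missing.
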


\subsection{Smooth sextic del Pezzo surface}
\label{subsection:dP6-smooth}

Let $\ell_1$, $\ell_2$, $\ell_3$, $\ell_4$ be four distinct rulings of $\mathbb{P}^1\times\mathbb{P}^1$
such that $\ell_1\cap\ell_3=\varnothing$, $\ell_2\cap\ell_4=\varnothing$,
and each intersection $\ell_1\cap\ell_2$, $\ell_2\cap\ell_3$, $\ell_3\cap\ell_4$, $\ell_4\cap\ell_1$ consists of one point,
let $\pi\colon S\to\mathbb{P}^1\times\mathbb{P}^1$ be the blow up of the points $\ell_1\cap\ell_2$ and $\ell_3\cap\ell_4$,
let $\mathbf{e}_1$ and $\mathbf{e}_2$ be the $\pi$-exceptional curves such that $\pi(\mathbf{e}_1)=\ell_1\cap\ell_2$
and $\pi(\mathbf{e}_2)=\ell_3\cap\ell_4$, let $\mathbf{l}_1$, $\mathbf{l}_2$, $\mathbf{l}_3$, $\mathbf{l}_4$
be the strict transforms on $S$ of the curves $\ell_1$, $\ell_2$, $\ell_3$, $\ell_4$, respectively.
Then $S$ is a del Pezzo surface of degree $6$, and
$\mathbf{e}_1$, $\mathbf{e}_2$, $\mathbf{l}_1$, $\mathbf{l}_2$, $\mathbf{l}_3$, $\mathbf{l}_4$
are all $(-1)$-curves in $S$. Set~$\mathbf{h}_1=\pi^*(\ell_1)$ and $\mathbf{h}_2=\pi^*(\ell_2)$. Set
$$
D=a\big(\mathbf{h}_1+\mathbf{h}_2)-\mathbf{e}_1-\mathbf{e}_2
$$
for $a\in(1,2]$. Then $D$ is ample and $D^2=2a^2-2$.

\begin{lemma}
\label{lemma:dP6-e1-e2}
Let $P$ be a point in $\mathbf{e}_1\cup\mathbf{e}_2$. If $P\in\mathbf{l}_1\cup\mathbf{l}_2\cup\mathbf{l}_3\cup\mathbf{l}_4$, then
$\delta_P(S,D)\geqslant\frac{2}{a}$ for $a\in(1,2]$. Similarly, if $P\not\in\mathbf{l}_1\cup\mathbf{l}_2\cup\mathbf{l}_3\cup\mathbf{l}_4$, then
$$
\delta_P\big(S,D\big)\geqslant
\left\{\aligned
&\frac{3(a+1)}{a^2+a+1} \ \text{ if } 1<a\leqslant \frac{1+\sqrt{33}}{4}, \\
&\frac{1}{a-1}\ \text{ if } \frac{1+\sqrt{33}}{4}\leqslant a\leqslant 2.
\endaligned
\right.
$$
\end{lemma}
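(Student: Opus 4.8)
The plan is to argue exactly as in the proofs of Lemmas~\ref{lemma:dP5-e1-e2-e3-e4} and \ref{lemma:dP5-lines}, by applying \eqref{equation:AZ-surface} with the curve $C$ taken to be one of the two $(-1)$-curves $\mathbf{e}_1$, $\mathbf{e}_2$. By the obvious symmetry we may assume $P\in\mathbf{e}_1$ and set $C=\mathbf{e}_1$. Among the six $(-1)$-curves on $S$, only $\mathbf{l}_1$ and $\mathbf{l}_2$ meet $C$: they are the strict transforms of $\ell_1$ and $\ell_2$, so they lie in the classes $\mathbf{h}_1-\mathbf{e}_1$ and $\mathbf{h}_2-\mathbf{e}_1$, and since $\mathbf{l}_1\cdot\mathbf{l}_2=0$ the two points $C\cap\mathbf{l}_1$ and $C\cap\mathbf{l}_2$ are distinct. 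Consequently, if $P\in\mathbf{l}_1\cup\mathbf{l}_2\cup\mathbf{l}_3\cup\mathbf{l}_4$ then in fact $P$ is one of these two points, and otherwise $P$ is a point of $C$ avoiding both of them.

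First I would work out the Zariski decomposition of $D-vC$. A direct computation with the intersection form of $S$ gives $\tau=2a-2$,
$$
P(v)\sim_{\mathbb{R}} \left\{\aligned
&a(\mathbf{h}_1+\mathbf{h}_2)-(1+v)\mathbf{e}_1-\mathbf{e}_2 \ \text{ if } 0\leqslant v\leqslant a-1, \\
&(2a-1-v)(\mathbf{h}_1+\mathbf{h}_2-\mathbf{e}_1)-\mathbf{e}_2 \ \text{ if } a-1\leqslant v\leqslant 2a-2,
\endaligned\right.
$$
and
$$
N(v)= \left\{\aligned
&0 \ \text{ if } 0\leqslant v\leqslant a-1, \\
&(v-a+1)\big(\mathbf{l}_1+\mathbf{l}_2\big) \ \text{ if } a-1\leqslant v\leqslant 2a-2.
\endaligned\right.
$$
The only wall is at $v=a-1$, where the disjoint $(-1)$-curves $\mathbf{l}_1,\mathbf{l}_2$ enter the negative part; one checks that $D-vC$ is nef on the first segment, that its positive part stays nef on the second, and that $D-vC$ is no longer pseudo-effective once $v>2a-2$ (each of these is verified by pairing against all six $(-1)$-curves). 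This gives $P(v)^2=2a^2-2-2v-v^2$ and $P(v)\cdot C=1+v$ for $v\in[0,a-1]$, and $P(v)^2=(2a-1-v)^2-1$ and $P(v)\cdot C=2a-1-v$ for $v\in[a-1,2a-2]$.

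Next I would integrate. Splitting each integral at $v=a-1$ and using the substitution $t=2a-1-v$ on the second piece, one obtains $S_D(C)=a-1$, so $\frac{1}{S_D(C)}=\frac{1}{a-1}$. Since $N(v)\vert_C=(v-a+1)\big((C\cap\mathbf{l}_1)+(C\cap\mathbf{l}_2)\big)$, in the case $P\notin\mathbf{l}_1\cup\mathbf{l}_2$ the first term of $S(W^C_{\bullet,\bullet};P)$ vanishes and
$$
S\big(W^C_{\bullet,\bullet};P\big)=\frac{1}{2a^2-2}\int\limits_0^{2a-2}\big(P(v)\cdot C\big)^2dv=\frac{a^2+a+1}{3(a+1)},
$$
whereas if $P\in\mathbf{l}_1\cup\mathbf{l}_2$ then $\mathrm{ord}_P(N(v)\vert_C)=v-a+1$ on $[a-1,2a-2]$ and the extra integral raises this to $S(W^C_{\bullet,\bullet};P)=\frac{a}{2}$. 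Plugging into \eqref{equation:AZ-surface}: when $P\in\mathbf{l}_1\cup\mathbf{l}_2$ we get $\delta_P(S,D)\geqslant\min\{\frac{1}{a-1},\frac{2}{a}\}=\frac{2}{a}$, since $a\leqslant 2$ forces $\frac{1}{a-1}\geqslant\frac{2}{a}$; in the remaining case we get $\delta_P(S,D)\geqslant\min\{\frac{1}{a-1},\frac{3(a+1)}{a^2+a+1}\}$, and since $\frac{1}{a-1}\geqslant\frac{3(a+1)}{a^2+a+1}$ holds exactly when $2a^2-a-4\leqslant 0$, i.e. when $a\leqslant\frac{1+\sqrt{33}}{4}$, this yields precisely the stated piecewise bound.

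The only delicate step is pinning down the Zariski decomposition in the second paragraph: one must check that $\mathbf{l}_1,\mathbf{l}_2$ (and nothing more) enter the negative part at $v=a-1$, that the positive part displayed above remains nef up to $v=2a-2$, and that $\tau=2a-2$ rather than a larger value. Once this is in hand, the rest is routine integration and a comparison of rational functions of $a$ on the interval $(1,2]$, just as in Lemmas~\ref{lemma:dP5-e1-e2-e3-e4} and \ref{lemma:dP5-lines}.
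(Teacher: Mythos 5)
Your proposal is correct and follows essentially the same route as the paper: the flag $C=\mathbf{e}_1$, the identical Zariski decomposition with $\mathbf{l}_1+\mathbf{l}_2$ entering the negative part at $v=a-1$, the values $S_D(C)=a-1$, $S(W^C_{\bullet,\bullet};P)=\tfrac{a^2+a+1}{3(a+1)}$ or $\tfrac{a}{2}$, and the conclusion via \eqref{equation:AZ-surface}. The only cosmetic difference is that you make explicit that $\mathbf{l}_3,\mathbf{l}_4$ do not meet $\mathbf{e}_1$, which the paper leaves implicit.
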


\begin{proof}
We may assume that $P\in\mathbf{e}_1$. Set $C=\mathbf{e}_1$. Then $\tau=2a-2$. Moreover, we have
$$
P(v)\sim_{\mathbb{R}} \left\{\aligned
&a\big(\mathbf{h}_1+\mathbf{h}_2)-(1+v)\mathbf{e}_1-\mathbf{e}_2 \ \text{ if } 0\leqslant v\leqslant a-1, \\
&(2a-v-1)\big(\mathbf{h}_1+\mathbf{h}_2)+(v+1-2a)\mathbf{e}_1-\mathbf{e}_2\ \text{ if } a-1\leqslant v\leqslant 2a-2,
\endaligned
\right.
$$
and
$$
N(v)=\left\{\aligned
&0\ \text{ if } 0\leqslant v\leqslant a-1, \\
&(v+1-a)\big(\mathbf{l}_{1}+\mathbf{l}_{2}\big)\ \text{ if } a-1\leqslant v\leqslant 2a-2,
\endaligned
\right.
$$
which gives
$$
P(v)^2=\left\{\aligned
&2a^2-v^2-2v-2 \ \text{ if } 0\leqslant v\leqslant a-1, \\
&(2a-v)(2a-v-2)\ \text{ if } a-1\leqslant v\leqslant 2a-2,
\endaligned
\right.
$$
and
$$
P(v)\cdot C=\left\{\aligned
&1+v\ \text{ if } 0\leqslant v\leqslant a-1, \\
&2a-v-1\ \text{ if } a-1\leqslant v\leqslant 2a-2.
\endaligned
\right.
$$
Integrating, we get $S_D(C)=a-1$. Similarly, we get
$$
S\big(W^C_{\bullet,\bullet};P\big)=
\frac{1}{a^2-1}\int\limits_{a-1}^{2a-2}\mathrm{ord}_P\big(N(v)\vert_{C}\big)\big(P(v)\cdot C\big)dv+\frac{a^2+a+1}{3(a+1)}.
$$
Thus, if $P\not\in\mathbf{l}_{1}\cup\mathbf{l}_{2}$, then $S(W^C_{\bullet,\bullet};P)=\frac{a^2+a+1}{3(a+1)}$, so that \eqref{equation:AZ-surface} gives
$$
\delta_P(S,D)\geqslant\min\Bigg\{\frac{1}{a-1},\frac{3(a+1)}{a^2+a+1}\Bigg\}=\left\{\aligned
&\frac{3(a+1)}{a^2+a+1} \ \text{ if } 1<a\leqslant \frac{1+\sqrt{33}}{4}, \\
&\frac{1}{a-1}\ \text{ if } \frac{1+\sqrt{33}}{4}\leqslant a\leqslant 2.
\endaligned
\right.
$$
Similarly, if $P\in\mathbf{l}_{1}\cup\mathbf{l}_{2}$, then
$$
S\big(W^C_{\bullet,\bullet};P\big)=
\frac{1}{a^2-1}\int\limits_{a-1}^{2a-2}(v+1-a)\big(P(v)\cdot C\big)dv+\frac{a^2+a+1}{3(a+1)}=\frac{a}{2},
$$
so that $S(W^C_{\bullet,\bullet};P)\leqslant\frac{a}{2}$ and $S_D(C)\leqslant\frac{a}{2}$,
which gives $\delta_P(S,D)\geqslant\frac{2}{a}$ by \eqref{equation:AZ-surface}.
\end{proof}

\begin{lemma}
\label{lemma:dP6-lines}
Let $P$ be a point in $\mathbf{l}_1\cup\mathbf{l}_2\cup\mathbf{l}_3\cup\mathbf{l}_4$.
Then $\delta_P(S,D)\geqslant\frac{2}{a}$ for every $a\in(1,2]$.
\end{lemma}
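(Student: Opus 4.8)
The plan is to imitate the proof of Lemma~\ref{lemma:dP6-e1-e2}, but now with $C=\mathbf{l}_1$ instead of $C=\mathbf{e}_1$. First I would reduce the configuration by symmetry: since $D\cdot\mathbf{l}_1=D\cdot\mathbf{l}_2=D\cdot\mathbf{l}_3=D\cdot\mathbf{l}_4=a-1$ while $D\cdot\mathbf{e}_1=D\cdot\mathbf{e}_2=1$, the surface $S$ has automorphisms preserving the class of $D$ and acting transitively on $\mathbf{l}_1,\dots,\mathbf{l}_4$ (for instance the lifts of the coordinate swap of $\mathbb{P}^1\times\mathbb{P}^1$ and of the involution exchanging the two blown-up points), so I may assume $P\in\mathbf{l}_1$. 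By Lemma~\ref{lemma:dP6-e1-e2} I may also assume $P\notin\mathbf{e}_1\cup\mathbf{e}_2$. Since the only $(-1)$-curves meeting $\mathbf{l}_1$ are $\mathbf{e}_1$ and $\mathbf{l}_4$, this leaves exactly two cases: $P$ is a general point of $\mathbf{l}_1$, or $P=\mathbf{l}_1\cap\mathbf{l}_4$.

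Next I would work out the Zariski decomposition of $D-vC$. Here $\tau=a$, because $(D-vC)\cdot\mathbf{h}_2=a-v$ and $\mathbf{h}_2$ is nef, while the positive part at $v=a$ is $(a-1)\mathbf{h}_2$. There are three intervals: on $[0,a-1]$ the divisor $D-vC$ is nef and $N(v)=0$; on $[a-1,1]$ one subtracts $\mathbf{l}_4$, so $N(v)=(v-a+1)\mathbf{l}_4$; and on $[1,a]$ one also subtracts $\mathbf{e}_1$, so $N(v)=(v-a+1)\mathbf{l}_4+(v-1)\mathbf{e}_1$ and $P(v)\sim_{\mathbb{R}}(a-v)\mathbf{h}_1+(2a-v-1)\mathbf{h}_2+(v-a)\mathbf{e}_2$. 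Note the asymmetry with Lemma~\ref{lemma:dP6-e1-e2}: because $D\cdot\mathbf{l}_4=a-1\leqslant 1=D\cdot\mathbf{e}_1$, the curve $\mathbf{l}_4$ enters the negative part strictly before $\mathbf{e}_1$. From this one reads off $P(v)^2$ and $P(v)\cdot C$ on each interval (so that $P(v)\cdot C=a-1+v$, then $2a-2$, then $2a-v-1$) and integrates to get $S_D(C)=\frac{1}{2a^2-2}\int_0^aP(v)^2\,dv=\frac{a}{2}$, which already forces $\delta_P(S,D)\leqslant\frac{2}{a}$.

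Finally I would estimate $S(W^C_{\bullet,\bullet};P)$ and apply \eqref{equation:AZ-surface}. Since $P\notin\mathbf{e}_1$, the term $\mathrm{ord}_P(N(v)\vert_C)$ only registers the $\mathbf{l}_4$-component of $N(v)$. If $P$ is a general point of $\mathbf{l}_1$, this term vanishes, and integrating $(P(v)\cdot C)^2$ gives $S(W^C_{\bullet,\bullet};P)=\frac{(a-1)(a+5)}{3(a+1)}$; since $3a(a+1)-2(a-1)(a+5)=a^2-5a+10>0$, this is $<\frac{a}{2}=S_D(C)$, and \eqref{equation:AZ-surface} yields $\delta_P(S,D)\geqslant\frac{2}{a}$. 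If $P=\mathbf{l}_1\cap\mathbf{l}_4$, then $\mathrm{ord}_P(N(v)\vert_C)=v-a+1$ on $[a-1,a]$, and a direct computation gives $S(W^C_{\bullet,\bullet};P)=\frac{a}{2}=S_D(C)$, so \eqref{equation:AZ-surface} again gives $\delta_P(S,D)\geqslant\frac{2}{a}$.

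I expect the Zariski decomposition to be the main obstacle: the value $\tau=a$ and, above all, the fact that $\mathbf{l}_4$ and $\mathbf{e}_1$ are subtracted at two different moments $v=a-1$ and $v=1$ (in contrast to Lemma~\ref{lemma:dP6-e1-e2}, where the two $(-1)$-curves meeting $\mathbf{e}_1$ enter simultaneously) must be pinned down by checking nefness of the candidate positive part against all six $(-1)$-curves. After that the integrals are routine, and the two positions of $P$ differ only in whether the $\mathbf{l}_4$-term of $N(v)$ contributes to $S(W^C_{\bullet,\bullet};P)$.
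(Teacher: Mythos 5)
Your proposal is correct and follows essentially the same route as the paper's proof: the same choice $C=\mathbf{l}_1$ after reducing to $P\notin\mathbf{e}_1\cup\mathbf{e}_2$ via Lemma~\ref{lemma:dP6-e1-e2}, the same three-interval Zariski decomposition (with $\mathbf{l}_4$ entering the negative part at $v=a-1$ and $\mathbf{e}_1$ at $v=1$), the same values $S_D(C)=\frac{a}{2}$ and $S\big(W^C_{\bullet,\bullet};P\big)=\frac{(a-1)(a+5)}{3(a+1)}$ or $\frac{a}{2}$ in the two cases, and the same conclusion via \eqref{equation:AZ-surface}.
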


\begin{proof}
We may assume that $P\in\mathbf{l}_1$.
Moreover, by Lemma~\ref{lemma:dP6-e1-e2}, we may assume that $P\not\in\mathbf{e}_1\cup\mathbf{e}_2$.
Set $C=\mathbf{l}_1$. Then $\tau=a$. Moreover, we have
$$
P(v)\sim_{\mathbb{R}} \left\{\aligned
&(a-v)\mathbf{h}_1+a\mathbf{h}_2+(v-1)\mathbf{e}_1-\mathbf{e}_2 \ \text{ if } 0\leqslant v\leqslant a-1, \\
&(a-v)\mathbf{h}_1+(2a-v-1)\mathbf{h}_2+(v-1)\mathbf{e}_1+(v-a)\mathbf{e}_2\ \text{ if } a-1\leqslant v\leqslant 1,\\
&(a-v)\mathbf{h}_1+(2a-v-1)\mathbf{h}_2+(v-a)\mathbf{e}_2\ \text{ if } 1\leqslant v\leqslant a,
\endaligned
\right.
$$
and
$$
N(v)=\left\{\aligned
&\ \text{ if } 0\leqslant v\leqslant a-1, \\
&(v+1-a)\mathbf{l}_4\ \text{ if } a-1\leqslant v\leqslant 1,\\
&(v+1-a)\mathbf{l}_4+(v-1)\mathbf{e}_1\ \text{ if } 1\leqslant v\leqslant a,
\endaligned
\right.
$$
which gives
$$
P(v)^2=\left\{\aligned
&2a^2-2av-v^2+2v-2 \ \text{ if } 0\leqslant v\leqslant a-1, \\
&(a-1)(3a-4v+1)\ \text{ if } a-1\leqslant v\leqslant 1,\\
&(3a-v-2)(a-v)\ \text{ if } 1\leqslant v\leqslant a,
\endaligned
\right.
$$
and
$$
P(v)\cdot C=\left\{\aligned
&a+v-1\ \text{ if } 0\leqslant v\leqslant a-1, \\
&2a-2\ \text{ if } a-1\leqslant v\leqslant 1,\\
&2a-v-1\ \text{ if } 1\leqslant v\leqslant a.
\endaligned
\right.
$$
Integrating, we get $S_D(C)=\frac{a}{2}$. Similarly, we get
$$
S\big(W^C_{\bullet,\bullet};P\big)=
\frac{1}{a^2-1}\int\limits_{a-1}^{a}\mathrm{ord}_P\big(N(v)\vert_{C}\big)\big(P(v)\cdot C\big)dv+\frac{(a+5)(a-1)}{3(a+1)}.
$$
Thus, if $P\not\in\mathbf{l}_{4}\cup\mathbf{e}_{1}$, then $S(W^C_{\bullet,\bullet};P)=\frac{(a+5)(a-1)}{3(a+1)}$.
Similarly, if $P\in\mathbf{l}_{4}$, then
$$
S\big(W^C_{\bullet,\bullet};P\big)=
\frac{1}{a^2-1}\int\limits_{a-1}^{a}(v+1-a)\big(P(v)\cdot C\big)dv+\frac{(a+5)(a-1)}{3(a+1)}=\frac{a}{2}.
$$
Hence, we see that $S(W^C_{\bullet,\bullet};P)\leqslant S_D(C)\geqslant\frac{a}{2}$,
so that $\delta_P(S,D)=\frac{2}{a}$ by \eqref{equation:AZ-surface}.
\end{proof}

\begin{lemma}
\label{lemma:dP6-general-point}
Suppose that $P\not\in\mathbf{e}_1\cup\mathbf{e}_2\cup\mathbf{l}_{1}\cup\mathbf{l}_{2}\cup\mathbf{l}_{3}\cup\mathbf{l}_{4}$.
Then
$$
\delta_P\big(S,D\big)\geqslant
\left\{\aligned
&\frac{3(a+1)}{a^2+a+1} \ \text{ if } 1<a\leqslant \frac{\sqrt{21}-1}{2}, \\
&\frac{2(a+1)}{a^2+a-1}\ \text{ if } \frac{\sqrt{21}-1}{2}\leqslant a\leqslant 2.
\endaligned
\right.
$$
\end{lemma}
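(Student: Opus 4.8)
The plan is to imitate the proof of Lemma~\ref{lemma:dP5-general-point}, with the quartic del Pezzo surface used there replaced by a quintic one. Let $f\colon\widetilde S\to S$ be the blow up of $S$ at $P$, and let $E$ be the $f$-exceptional curve. Since $P$ lies on none of the six $(-1)$-curves $\mathbf e_1,\mathbf e_2,\mathbf l_1,\mathbf l_2,\mathbf l_3,\mathbf l_4$, the surface $\widetilde S$ is a smooth del Pezzo surface of degree $5$, and the strict transforms $\widetilde{\mathbf e}_1,\widetilde{\mathbf e}_2,\widetilde{\mathbf l}_1,\widetilde{\mathbf l}_2,\widetilde{\mathbf l}_3,\widetilde{\mathbf l}_4$ of these curves (which coincide with their total transforms) are again $(-1)$-curves. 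Setting $\widetilde{\mathbf h}_i=f^*(\mathbf h_i)$, the three remaining $(-1)$-curves of $\widetilde S$ are $\mathbf c_1\in|\widetilde{\mathbf h}_1-E|$, $\mathbf c_2\in|\widetilde{\mathbf h}_2-E|$ and $\mathbf c_0\in|\widetilde{\mathbf h}_1+\widetilde{\mathbf h}_2-\widetilde{\mathbf e}_1-\widetilde{\mathbf e}_2-E|$, namely the strict transforms of the unique members through $P$ of the three pencils of conics on $S$. They are pairwise disjoint, and each of them meets $E$ transversally at one point, which I denote by $O_0$, $O_1$, $O_2$, respectively.

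Next I would compute the Zariski decomposition of $f^*(D)-vE$. Here $\widetilde\tau=2a-1$, and there are three chambers: $\widetilde N(v)=0$ for $v\in[0,2a-2]$; then $\widetilde N(v)=(v-2a+2)\mathbf c_0$ for $v\in[2a-2,a]$; and finally $\widetilde N(v)=(v-2a+2)\mathbf c_0+(v-a)(\mathbf c_1+\mathbf c_2)$ for $v\in[a,2a-1]$, where in the last chamber one has the clean formula $\widetilde P(v)=(2a-1-v)\bigl(f^*(-K_S)-2E\bigr)$. The walls fall at $v=2a-2$ and $v=a$ in this order because $a\le 2$ forces $\mathbf c_0$ to pair non-positively with $f^*(D)-vE$ before $\mathbf c_1$ and $\mathbf c_2$ do; the pairwise orthogonality of $\mathbf c_0,\mathbf c_1,\mathbf c_2$ then makes the negative parts immediate, and one checks that no other $(-1)$-curve ever enters. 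Reading off $\widetilde P(v)^2$ and $\widetilde P(v)\cdot E$ on each chamber and integrating gives $S_D(E)=\frac{a^2+a-1}{a+1}$, hence $\frac{2}{S_D(E)}=\frac{2(a+1)}{a^2+a-1}$.

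Then, for a point $O\in E$, I would compute $S(W^E_{\bullet,\bullet};O)$. The term $\frac{1}{D^2}\int_0^{\widetilde\tau}(\widetilde P(v)\cdot E)^2\,dv$ equals $\frac{2(a-1)}{a+1}$ for every $O$, while the term involving $\mathrm{ord}_O(\widetilde N(v)\vert_E)$ vanishes unless $O\in\{O_0,O_1,O_2\}$. A direct integration yields $S(W^E_{\bullet,\bullet};O_0)=\frac{a^2+a+1}{3(a+1)}$, $S(W^E_{\bullet,\bullet};O_1)=S(W^E_{\bullet,\bullet};O_2)=\frac{(a-1)(a+5)}{3(a+1)}$, and $S(W^E_{\bullet,\bullet};O)=\frac{2(a-1)}{a+1}$ otherwise. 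Since $a^2+a+1\ge(a-1)(a+5)$ and $a^2+a+1\ge 6(a-1)$ for all $a\in(1,2]$, this gives $\inf_{O\in E}\frac{1}{S(W^E_{\bullet,\bullet};O)}=\frac{3(a+1)}{a^2+a+1}$. Substituting both quantities into \eqref{equation:AZ-surface-blow-up},
$$
\delta_P(S,D)\ \ge\ \min\Bigg\{\frac{2(a+1)}{a^2+a-1},\ \frac{3(a+1)}{a^2+a+1}\Bigg\},
$$
and since $\frac{2(a+1)}{a^2+a-1}\le\frac{3(a+1)}{a^2+a+1}$ precisely when $a^2+a-5\ge 0$, i.e. when $a\ge\frac{\sqrt{21}-1}{2}$, the right-hand side is exactly the piecewise expression in the statement.

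The one genuinely delicate point I anticipate is the three-chamber Zariski decomposition: pinning down the order in which $\mathbf c_0,\mathbf c_1,\mathbf c_2$ enter the negative part and verifying that $\widetilde P(v)$ is nef throughout, the crux of which is that $f^*(-K_S)-2E=-K_{\widetilde S}-E$ is nef because it has non-negative intersection with every $(-1)$-curve of $\widetilde S$. After that, the proof is the same elementary integration and comparison of rational functions of $a$ as in the preceding lemmas of this appendix.
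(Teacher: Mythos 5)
Your proposal is correct and follows essentially the same route as the paper: blow up $P$, identify the $(-1)$-curves $\mathbf{c}_0\in|\widetilde{\mathbf{h}}_1+\widetilde{\mathbf{h}}_2-\widetilde{\mathbf{e}}_1-\widetilde{\mathbf{e}}_2-E|$, $\mathbf{c}_1\in|\widetilde{\mathbf{h}}_1-E|$, $\mathbf{c}_2\in|\widetilde{\mathbf{h}}_2-E|$, compute the same three-chamber Zariski decomposition with walls at $v=2a-2$ and $v=a$ and $\widetilde{\tau}=2a-1$, and obtain $S_D(E)=\frac{a^2+a-1}{a+1}$ together with the same three values of $S(W^E_{\bullet,\bullet};O)$, so that \eqref{equation:AZ-surface-blow-up} yields exactly the stated bound. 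The computations and the final comparison of $\frac{2(a+1)}{a^2+a-1}$ with $\frac{3(a+1)}{a^2+a+1}$ at $a=\frac{\sqrt{21}-1}{2}$ agree with the paper.
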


\begin{proof}
Recall that $\widetilde{S}$ is a smooth del Pezzo surface of degree $5$.
Let $\widetilde{\mathbf{e}}_1$, $\widetilde{\mathbf{e}}_2$, $\widetilde{\mathbf{l}}_{1}$, $\widetilde{\mathbf{l}}_{2}$, $\widetilde{\mathbf{l}}_{3}$,
$\widetilde{\mathbf{l}}_{4}$ be the strict transforms on $\widetilde{S}$ of the $(-1)$-curves $\mathbf{e}_1$, $\mathbf{e}_2$, $\mathbf{l}_{1}$, $\mathbf{l}_{2}$, $\mathbf{l}_{3}$, $\mathbf{l}_{4}$, respectively.
Set $\widetilde{\mathbf{h}}_1=f^*(\mathbf{h}_1)$ and $\widetilde{\mathbf{h}}_2=f^*(\mathbf{h}_2)$.
Let $\mathbf{c}_0$, $\mathbf{c}_1$, $\mathbf{c}_2$
be the curves in $|\widetilde{\mathbf{h}}_1+\widetilde{\mathbf{h}}_2-\widetilde{\mathbf{e}}_1-\widetilde{\mathbf{e}}_2-E|$,
$|\widetilde{\mathbf{h}}_1-E|$, $|\widetilde{\mathbf{h}}_2-E|$,~respectively.
Then
\begin{center}
$\widetilde{\mathbf{e}}_1$, $\widetilde{\mathbf{e}}_2$, $\widetilde{\mathbf{l}}_{1}$, $\widetilde{\mathbf{l}}_{2}$, $\widetilde{\mathbf{l}}_{3}$,
$\widetilde{\mathbf{l}}_{4}$, $\mathbf{c}_0$, $\mathbf{c}_1$, $\mathbf{c}_2$, $E$
\end{center}
are all $(-1)$-curves in $\widetilde{S}$. We compute $\widetilde{\tau}=2a-1$. Similarly, we see that
$$
\widetilde{P}(v)\sim_{\mathbb{R}} \left\{\aligned
&a\big(\widetilde{\mathbf{h}}_1+\widetilde{\mathbf{h}}_2\big)-\widetilde{\mathbf{e}}_1-\widetilde{\mathbf{e}}_2-vE \ \text{ if } 0\leqslant v\leqslant 2a-2, \\
&(3a-v-2)\big(\widetilde{\mathbf{h}}_1+\widetilde{\mathbf{h}}_2\big)+(1-2a+v)\big(\widetilde{\mathbf{e}}_1+\widetilde{\mathbf{e}}_2\big)+(2-2a)E\ \text{ if } 2a-2\leqslant v\leqslant a, \\
&(2a-1-v)\big(2\widetilde{\mathbf{h}}_1+2\widetilde{\mathbf{h}}_2-\widetilde{\mathbf{e}}_1-\widetilde{\mathbf{e}}_2-2E\big) \ \text{ if } a\leqslant v\leqslant 2a-1,
\endaligned
\right.
$$
and
$$
\widetilde{N}(v)=\left\{\aligned
&0\ \text{ if } 0\leqslant v\leqslant 2a-2, \\
&(v+2-2a)\mathbf{c}_0\ \text{ if } 2a-2\leqslant v\leqslant a, \\
&(v+2-2a)\mathbf{c}_0+(v-a)\big(\mathbf{c}_1+\mathbf{c}_2\big) \ \text{ if } a\leqslant v\leqslant 2a-1.
\endaligned
\right.
$$
This gives
$$
\widetilde{P}(v)^2=\left\{\aligned
&2a^2-v^2-2\ \text{ if } 0\leqslant v\leqslant 2a-2, \\
&2(a-1)(3a-2v-1)\ \text{ if } 2a-2\leqslant v\leqslant a, \\
&2(2a-1-v)^2\ \text{ if } a\leqslant v\leqslant 2a-1,
\endaligned
\right.
$$
and
$$
\widetilde{P}(v)\cdot E=\left\{\aligned
&v\ \text{ if } 0\leqslant v\leqslant 2a-2, \\
&2a-2\ \text{ if } 2a-2\leqslant v\leqslant a, \\
&4a-2v-2\ \text{ if } a\leqslant v\leqslant 2a-1.
\endaligned
\right.
$$
Now, integrating, we get $S_D(E)=\frac{a^2+a-1}{a+1}$.

Let $O$ be a point in $E$. Then
$$
S\big(W^E_{\bullet,\bullet};O\big)=
\frac{1}{a^2-1}\int\limits_{2a-2}^{2a-1}\mathrm{ord}_O\big(\widetilde{N}(v)\vert_{E}\big)\big(\widetilde{P}(v)\cdot E\big)dv+\frac{2(a-1)}{a+1}.
$$
Thus, if $O\not\in\mathbf{c}_0\cup\mathbf{c}_1\cup\mathbf{c}_2$, then $S(W^E_{\bullet,\bullet};O)=\frac{2(a-1)}{a+1}$.
Similarly, if $O\in\mathbf{c}_0$, then
$$
S\big(W^E_{\bullet,\bullet};O\big)=
\frac{1}{a^2-1}\int\limits_{2a-2}^{2a-1}(v+4-2a)\big(\widetilde{P}(v)\cdot E\big)dv+\frac{2(a-1)}{a+1}=\frac{a^2+a+1}{3(a+1)}.
$$
Likewise, if $O\in\mathbf{c}_1\cup\mathbf{c}_2$, then
$$
S\big(W^E_{\bullet,\bullet};O\big)=
\frac{1}{a^2-1}\int\limits_{a}^{2a-1}(v + 1 - a)\big(\widetilde{P}(v)\cdot E\big)dv+\frac{2(a-1)}{a+1}=\frac{(a+5)(a-1)}{3(a+1)}.
$$
Therefore, using \eqref{equation:AZ-surface-blow-up}, we get
$$
\delta_P(S,D)\geqslant\min\Bigg\{\frac{2(a+1)}{a^2+a-1},\frac{3(a+1)}{a^2+a+1},\frac{3(a+1)}{(a+5)(a-1)}\Bigg\},
$$
which implies the required assertion.
\end{proof}


\begin{thebibliography}{99}

\bibitem{AbbanZhuang}
H.~Abban, Z.~Zhuang, \emph{K-stability of Fano varieties via admissible flags}, Forum of Mathematics, Pi \textbf{10} (2022), Paper No. e15, 43 p.

\bibitem{Akaike}
H.~Akaike, \emph{Local delta invariants of weak del Pezzo surfaces with the anti-canonical degree $\geqslant 5$},
preprint,  arXiv:2304.09437 (2023).

\bibitem{Book}
C.~Araujo, A.-M.~Castravet, I.~Cheltsov, K.~Fujita, A.-S.~Kaloghiros, J.~Martinez-Garcia, C.~Shramov, H.~S\"u\ss, N.~Viswanathan,
\emph{The Calabi problem for Fano threefolds}, LMS Lecture Notes in Mathematics \textbf{485}, Cambridge University Press, 2023.

\bibitem{BlumLiuXu}
H.~Blum, Y.~Liu, C.~Xu, \emph{Openness of K-semistability for Fano varieties}, Duke Math. J. \textbf{171} (2022), 2753--2797.

\bibitem{CheltsovFujitaKishimotoOkada}
I.~Cheltsov, K.~Fujita, T.~Kishimoto, T.~Okada,
\emph{K-stable divisors in $\mathbb{P}^1\times\mathbb{P}^1\times\mathbb{P}^2$ of degree $(1,1,2)$},
Nagoya Math. Journal \textbf{251} (2023), 686--714.

\bibitem{CorayTsfasman}
D.~Coray, M.~Tsfasman, \emph{Arithmetic on singular del Pezzo surfaces}, Proc. Lond. Math. Soc. \textbf{57} (1988), 25--87.

\bibitem{Denisova}
E.~Denisova, \emph{$\delta$-invariant of Du Val del Pezzo surfaces of degree $\geqslant 4$},
preprint,  arXiv:2304.11412 (2023).

\bibitem{Fujita}
K.~Fujita, \emph{A valuative criterion for uniform K-stability of $\mathbb{Q}$-Fano varieties}, J. Reine Angew. Math. \textbf{751} (2019), 309--338.

\bibitem{Fujita2016}
K.~Fujita, \emph{On K-stability and the~volume functions of $\mathbb{Q}$-Fano varieties}, Proc. Lond. Math. Soc. \textbf{113} (2016), 541--582.

\bibitem{Li}
C.~Li, \emph{K-semistability is equivariant volume minimization}, Duke Math. Jour. \textbf{166} (2017), 3147--3218.
		

\bibitem{Xu}
C.~Xu, \emph{K-stability of Fano varieties: an algebro-geometric approach}, EMS Surveys in Mathematical
Sciences, 8 (2021), 265--354.

\end{thebibliography}
\end{document}